\colorlet{MyBlue}{DodgerBlue!75!Black}
\colorlet{MyGreen}{DarkGreen!95!Black}
\newcommand{\EMAIL}[1]{\email{\href{mailto:#1}{#1}}}
\numberwithin{equation}{section}  
\newcommand{\dd}{\:d}
\newcommand{\eps}{\epsilon}
\newcommand{\dif}{\dd}
\DeclareMathOperator{\cl}{cl}
\DeclareMathOperator{\dom}{dom}
\DeclareMathOperator{\ran}{ran}
\DeclareMathOperator{\gr}{gr}
\DeclareMathOperator{\Id}{Id}
\DeclareMathOperator{\Zer}{Zer}
\DeclareMathOperator{\Fix}{Fix}
\renewcommand{\emptyset}{\varnothing}
\newcommand{\eqdef}{\triangleq}
\newcommand{\scrH}{\mathcal{H}}
\newcommand{\scrS}{\mathcal{S}}
\renewcommand{\Pr}{P}
\newcommand{\R}{\mathbb{R}}
\DeclareMathOperator{\NC}{\mathsf{N}}
\theoremstyle{plain}
\newtheorem{theorem}{Theorem}
\newtheorem*{corollary*}{Corollary}
\newtheorem{lemma}[theorem]{Lemma}
\newtheorem{proposition}[theorem]{Proposition}
\theoremstyle{definition}
\newtheorem{definition}[theorem]{Definition}
\newtheorem*{definition*}{Definition}
\newtheorem{assumption}[theorem]{Assumption}
\theoremstyle{remark}
\newtheorem{remark}{Remark}
\newtheorem*{remark*}{Remark}
\newtheorem*{notation*}{Notational remark}
\numberwithin{theorem}{section}
\numberwithin{remark}{section}
\numberwithin{example}{section}
\DeclarePairedDelimiter{\abs}{\lvert}{\rvert}
\DeclarePairedDelimiter{\inner}{\langle}{\rangle}
\DeclarePairedDelimiter{\norm}{\lVert}{\rVert}
\begin{document}


\title[Strong convergence in dynamical systems with composite structure]{Inducing strong convergence of trajectories in dynamical systems associated to monotone inclusions with composite structure}

\author[R.I. Bo\c t]{Radu Ioan Bo\c t$^{\star}$}
\address{$^{\star}$\,%
Faculty of Mathematics, University of Vienna, Oskar-Morgenstern-Platz 1, A-1090 Vienna, Austria.}
\EMAIL{radu.bot@univie.ac.at}
\author[S.-M. Grad]{Sorin-Mihai Grad$^{\star}$}
\EMAIL{sorin-mihai.grad@univie.ac.at}
\author[D.~Meier]{Dennis Meier$^{\star}$}
\EMAIL{meierd61@univie.ac.at}
\author[M.Staudigl]{\\Mathias Staudigl$^{\diamond}$}
\address{$^{\diamond}$\,%
Maastricht University, Department of Quantitative Economics, P.O. Box 616, NL\textendash 6200 MD Maastricht, The Netherlands.}
\EMAIL{m.staudigl@maastrichtuniversity.nl}

\subjclass[2010]{34G25, 37N40, 47H05, 90C25}
\keywords{monotone inclusions, dynamical systems, Tikhonov regularization, asymptotic analysis}

\begin{abstract}
In this work we investigate dynamical systems designed to approach the solution sets of inclusion problems involving the sum of two maximally monotone operators. Our aim is to design methods which guarantee strong convergence of trajectories towards the minimum norm solution of the underlying monotone inclusion problem. To that end, we investigate in detail the asymptotic behavior of dynamical systems perturbed by a Tikhonov regularization where either the maximally monotone operators themselves, or the vector field of the dynamical system is regularized. In both cases we prove strong convergence of the trajectories towards minimum norm solutions to an underlying monotone inclusion problem, and we illustrate numerically qualitative differences between these two complementary regularization strategies. The so-constructed dynamical systems are either of Krasnoselskii-Mann, of forward-backward type or of forward-backward-forward type, and with the help of injected regularization we demonstrate seminal results on the strong convergence of Hilbert space valued evolutions designed to solve monotone inclusion and equilibrium problems. 
\end{abstract}
\maketitle

\renewcommand{\sharp}{\gamma}
\acresetall
\allowdisplaybreaks

\section{Introduction}
\label{sec:introduction}
In 1974, Bruck showed in \cite{Bru75} that trajectories of the steepest descent system 
\begin{equation}
\dot{x}(t)+\partial \Phi(x(t))\ni 0
\end{equation}
minimize the convex, proper, lower semi-continuous potential $\Phi$ defined on a real Hilbert space $\scrH$. They weakly converge towards a minimum of $\Phi$ and the potential decreases along the trajectory towards its minimal value, provided that $\Phi$ attains its minimum. Subsequently, Baillon and Brezis generalized in \cite{BailBre76} this result to differential inclusions whose drift is a maximally monotone operator $A:\scrH\rightrightarrows 2^{\scrH}$, and dynamics 
\begin{equation}
\dot{x}(t)+A(x(t))\ni 0.
\end{equation}
Baillon provided in \cite{Bai78} an example where the trajectories of the steepest descent system converge weakly but not strongly. A key tool in the study of the convergence of the steepest descent method is the association of Fej\'er monotonicity with the Opial lemma.  In 1996, Attouch and Cominetti coupled in \cite{AC6} approximation methods with the steepest descent system by adding a Tikhonov regularization term 
\begin{equation}\label{eq:AC}
\dot{x}(t)+\partial \Phi(x(t))+\eps(t)x(t)\ni 0.
\end{equation}
The time-varying parameter $\eps(t)$ tends to zero and the potential field $\partial\Phi$ satisfies the usual assumptions for strong existence and uniqueness of trajectories. The striking point of their analysis is the strong convergence of the trajectories when the regularization function $t\mapsto \eps(t)$ tends to zero at a sufficiently slow rate. In particular, $\eps\notin L^{1}(\R_{+};\R)$. Then the strong limit is the point of minimal norm among the minima of the convex function $\Phi$. This is a rather surprising result since we know that if $\eps=0$ we can only expect weak convergence of the induced trajectories under the standard hypotheses, and suddenly with the regularization term the convergence is strong without imposing additional demanding assumptions. These papers were the starting point for a flourishing line of research in which dynamical systems motivated by solving challenging optimization and monotone inclusion problems are studied. The formulation of numerical algorithms as continuous-in-time dynamical systems makes it possible to understand the asymptotic properties of the algorithms by relating them to their descent properties in terms of energy and/or Lyapunov functions, and to derive new numerical algorithms via sophisticated numerical discretization techniques (see, for instance, \cite{ABRT04,AABR02,Wib16,MerSta18,MerSta18b}). This paper follows this line of research. In particular, our main aim in this work is to construct dynamical systems designed to solve Hilbert space valued monotone inclusions of the form 
\begin{equation}\label{eq:MIP}\tag{MIP}
\text{find }x^{\ast}\in\scrH\text{ such that } 0\in Ax^{\ast}+Bx^{\ast},
\end{equation}
where $A : \scrH \rightrightarrows \scrH$ is a maximally monotone operator and $B : \scrH \to \scrH$ a $\beta$-cocoercive (respectively a $({1}/{\beta})$-Lipschitz continuous) operator with $\beta > 0$, such that $\Zer(A+B)$ is nonempty,
and our focus is to design methods which guarantee \emph{strong convergence} of the trajectories towards a solution of \eqref{eq:MIP}. This is a considerable advancement when contrasted with existing methods, where usually only weak convergence of trajectories is to be expected, for the strong one additional demanding hypotheses being imposed. Indeed, departing from the seminal work of Attouch and Cominetti \cite{AC6} a thriving series of papers on dynamical systems for solving monotone inclusions of type \eqref{eq:MIP} emerged, relating continuous-time methods to classical operator splitting iterations. A general overview of this still very active topic is given in \cite{PeySor10}. In \cite{BOL}, Bolte studied the weak convergence of the trajectories of the dynamical system 
\begin{equation}\label{eq:BOL}
\left\{\begin{array}{rl}
\dot{x}(t)+x(t)&=\Pr_{C}(x(t)-\gamma \nabla \Phi(x(t)),\\
x(0)&=x_{0}
\end{array}\right.
\end{equation}
where $\Phi:\scrH\to\R$ is a convex and continuously differentiable function defined on a real Hilbert space $\scrH$, and $C\subseteq\scrH$ is a closed and convex subset with an easy to evaluate orthogonal projector $\Pr_{C}$. Bolte shows that the trajectories of the dynamical system converge weakly to a solution of \eqref{eq:MIP} with $A=\NC_{C}$, the normal cone mapping of $C$, and $B=\nabla\Phi$, which is actually an optimal solution to the optimization problem 
\[
\inf_{x\in C}\Phi(x).
\]
Moreover, in \cite[Section 5]{BOL} a Tikhonov regularization term is added to the differential equation, guaranteeing strong convergence of the trajectories of the perturbed dynamical system. More recently, \cite{AbbAtt15} provided a generalization of Bolte's work where the authors proposed the dynamical system 
\begin{equation}\label{eq:ABB}
\left\{\begin{array}{rl}
\dot{x}(t)+x(t)&=J_{\gamma\partial\Phi}(x(t)-\gamma B(x(t))),\\
x(0)&=x_{0},
\end{array}\right.
\end{equation}
where $\Phi:\scrH\to\R\cup\{+\infty\}$ is a proper, convex and lower semi-continuous function defined on a real Hilbert space $\scrH$, and $B:\scrH\to\scrH$ is a cocoercive operator. 

This projection-differential dynamical system relies on the resolvent operator $J_{\gamma\partial\Phi}\eqdef (\Id+\gamma\partial\Phi)^{-1}$, and reduces to the system \eqref{eq:BOL} when the function $\Phi$ is the indicator function of a closed convex set $C\subseteq\scrH$. It is shown that the trajectories of the dynamical system converge weakly to a solution of the associated \eqref{eq:MIP} in which $A=\partial\Phi$.  In this paper, we continue this line of research and generalize it in two directions. Our first set of results is concerned with dynamical systems of the form \eqref{eq:ABB} involving a nonexpansive mapping. Building on \cite{CPS,BC7}, we perturb such a dynamical system with a Tikhonov regularization term that induces the strong convergence of the thus generated trajectories. This family of dynamical systems is of Krasnoselskii-Mann type whose explicit or implicit numerical discretizations are well studied (see \cite{BC2}). Next, we consider a family of asymptotically autonomous semi-flows derived from operator splitting algorithms. These splitting techniques originate from the theoretical analysis of PDEs, and can be traced back to classical work of \cite{LioSta67,LioMer79}. In this direction, we generalize recent results of \cite{BC7} for dynamical systems of forward-backward type, and \cite{BB8} for dynamical systems of forward-backward-forward type. In both of these papers the strong convergence of the trajectories is guaranteed only under demanding additional hypotheses like strong monotonicity of one of the involved operators. On the other hand, in articles like \cite{AC2, CPS} strong converge of trajectories of dynamical systems involving a single monotone operator or function is achieved by means of a suitable Tikhonov regularization under mild conditions. They motivated us to perturb the mentioned dynamical systems from \cite{BC7, BB8} in a similar manner in order to achieve strong convergence of the trajectories of the resulting Tikhonov regularized dynamical systems under natural
assumptions. To the best of our knowledge the only previous contribution in the literature in this direction is the very recent preprint \cite{PAV}, where a Tikhonov regularized dynamical system involving a nonexpansive operator is investigated, whose trajectories strongly converge towards a fixed point of the latter. All these results are special cases of the analysis provided in this paper. 


In the first part of our paper we deal with a Tikhonov regularized Krasnoselskii-Mann dynamical system and show that its trajectories strongly converge towards a fixed point of the governing nonexpansive operator. Afterwards a modification of this dynamical system inspired by \cite{PAV} is proposed, where the involved operator maps a closed convex set to itself and a similar result is obtained under a different hypothesis. The main result of \cite{PAV} is then recovered as a special case, while another special case concerns a Tikhonov regularized forward-backward dynamical system whose trajectories strongly converge towards a zero of a sum of a maximally monotone operator with a single-valued cocoercive one. Because the regularization term is applied to the whole differential equation, we speak in this case of an outer Tikhonov regularized forward-backward dynamical system. In the next section another forward-backward dynamical system, this time with dynamic stepsizes and an inner Tikhonov regularization of the single-valued operator, is investigated and we show the strong convergence of its trajectories towards the minimum norm zero of a similar sum of operators. Afterwards we consider an implicit forward-backward-forward dynamical system with a similar inner Tikhonov regularization of the involved single-valued operator, whose trajectories strongly converge towards the minimum norm zero of a sum of a maximally monotone operator with a single-valued Lipschitz continuous one. In order to illustrate the theoretical results we present some numerical experiments as well, which shed some light on the role of the regularization parameter on the long-run behavior of trajectories. 

\section{Setup and preliminaries}
\label{sec:prelims}

We collect in this section some general concepts from variational and functional analysis. We follow standard notation, as developed in \cite{BC2}.
Let $\scrH$ be a real Hilbert space. A set-valued operator $M:\scrH\rightrightarrows\scrH$ maps points in $\scrH$ to subsets of $\scrH$. We denote by
\begin{align*}
\dom(M)&\eqdef\{x\in\scrH\vert Mx\neq\emptyset\},\\
\ran(M)&\eqdef\{y\in\scrH\vert(\exists x\in\scrH):y\in Mx\},\\
\gr(M)&\eqdef\{(x,y)\in\scrH\times\scrH\vert y\in Mx\},\\
\Zer(M)&\eqdef\{x\in\scrH\vert 0\in Mx\},
\end{align*}
its \textit{domain}, \textit{range}, \textit{graph} and \textit{set of zeros}, respectively. A set-valued operator $M:\scrH \rightrightarrows\scrH$ is called \textit{monotone} if
\begin{equation}
\inner{x-y,x^{\ast}-y^{\ast}}\geq 0\qquad\forall (x^{\ast},x)\in\gr(M),(y^{\ast},y)\in\gr(M).
\end{equation}
The operator $M:\scrH\rightrightarrows\scrH$ is called \emph{maximally monotone} if it is monotone and there is no monotone operator $\tilde{M}:\scrH \rightrightarrows\scrH$ such that $\gr(M)\subseteq\gr(\tilde{M})$. $M$ is said to be \textit{$\rho$-strongly monotone} if
\begin{equation}
\inner{x-y,x^{\ast}-y^{\ast}}\geq \rho\norm{x-y}^{2}\qquad\forall (x^{\ast},x)\in\gr(M),(y^{\ast},y)\in\gr(M).
\end{equation}
If $M$ is maximally monotone and strongly monotone, then $\Zer(M)$ is singleton \cite[][Corollary 23.37]{BC2}.

A single-valued operator $T:\scrH \to \scrH$ is called \emph{nonexpansive} when $\|Tx-Ty\| \leq \|x-y\|$ for all $x, y\in \scrH$, while, given some $\beta > 0$, $T$ is said to be \textit{$\beta$-cocoercive} if $\langle x-y, Tx-Ty\rangle \geq \beta \|Tx-Ty\|^2$ for all $x, y\in \scrH$.

Let $\alpha \in (0,1)$ be fixed. We say that $R : \scrH \to \scrH$ is \textit{$\alpha$-averaged} if there exists a nonexpansive operator $T : \scrH \to \scrH$ such that $R = (1-\alpha) \Id + \alpha T$. An important instance of $\alpha$-averaged operators are \textit{firmly nonexpansive mappings}, which we recover for $\alpha = {1}/{2}$. For further insights into averaged operators we refer the reader to \cite[Section~4.5]{BC2}.

The \textit{resolvent} of the maximally monotone operator $M$ is defined as $J_{M}\eqdef(\Id+M)^{-1}$. It is a single-valued operator with $\dom(J_{M})=\scrH$ and it is firmly nonexpansive
\begin{equation}
\norm{J_{M}x-J_{M}y}^{2}\leq\inner{J_{M}x-J_{M}y,x-y}\qquad\forall x,y\in\scrH.
\end{equation}

For all $\lambda,\mu>0$ and $x\in\scrH$ it holds that (see \cite[Proposition~23.28]{BC2})
\begin{equation}\label{eq:Res}
\norm{J_{\lambda M}x-J_{\mu M}x}\leq\abs{\lambda-\mu}\norm{M_{\lambda}x},
\end{equation}
where $M_{\lambda}\eqdef({1}/{\lambda}) (\Id-J_{\lambda M})$ is the \textit{Yosida approximation} of the maximal monotone operator $M$ with parameter $\lambda>0$.

By $\Pr_{C}$ we denote the \textit{orthogonal projector} onto a closed convex set $C\subseteq \scrH$, while the \textit{normal cone} of a set $C\subseteq \scrH$ is $\NC_C\eqdef\{z\in \scrH :\langle z,y-x\rangle\leq 0$ $\forall y\in C\}$ if $x\in C$ and $\NC_C(x)=\emptyset$ otherwise.

We also need the following basic identity (cf. \cite{BC2})
\begin{align}\label{basic}
	\|\alpha x + (1-\alpha)y \|^2 + \alpha(1 - \alpha) \|x-y\|^2 = \alpha \|x\|^2 + (1-\alpha) \|y\|^2 ~~\forall \alpha \in \mathbb{R}~ \forall x,y \in \scrH.
\end{align}

\subsection{Properties of perturbed operators}

Let $\scrH$ be a real Hilbert space, $A : \scrH \rightrightarrows \scrH$ a maximally monotone operator and $B : \scrH \to \scrH$ a monotone and $({1}/{\beta})$-Lipschitz continuous operator, for some $\beta > 0$.
Let $\varepsilon>0$ and denote $B_{\varepsilon}\eqdef B+\varepsilon\Id:\scrH\to\scrH$. Since $B$ is maximally monotone and $({1}/{\beta})$-Lipschitz, the perturbed operator $B_{\varepsilon}$ is $\eps$-strongly  monotone and $(\varepsilon + {1}/\beta)$-Lipschitz continuous. In particular, the operator $A+B_{\varepsilon}$ is $\eps$-strongly monotone. Hence, for every $\eps>0$ the set $\scrS_{\varepsilon}\eqdef\Zer(A+B_{\varepsilon})$ is a singleton with the unique element denoted $x_{\varepsilon}$. 
Throughout this paper we consider the following hypothesis valid.

\begin{assumption}
$\scrS_{0}\eqdef\Zer(A+B)\neq\emptyset$.
\end{assumption}

The following lemma is a classical result due to Bruck \cite{BRU}. A short proof can be found in \cite[Lemma~4]{CPS}.

\begin{lemma}\label{lem:x_eps}
	It holds $x_\varepsilon \to x^\ast\eqdef\inf\{\norm{x}:x\in\scrS_{0}\}$ as $\varepsilon\to 0$.
\end{lemma}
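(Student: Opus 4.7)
The plan is to break the argument into three standard steps: establish boundedness of the net $\{x_\varepsilon\}_{\varepsilon>0}$ with a quantitative bound in terms of $\|x^\ast\|$, show that every weak cluster point lies in $\scrS_0$ and therefore must equal $x^\ast$, and finally upgrade weak convergence to norm convergence via the norm-inequality and the Radon–Riesz property of Hilbert spaces.

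First I would bound $x_\varepsilon$. Fix an arbitrary $x \in \scrS_0$, so $-Bx \in Ax$, and $x_\varepsilon$ satisfies $-\varepsilon x_\varepsilon - B x_\varepsilon \in A x_\varepsilon$ by definition of $\scrS_\varepsilon$. Monotonicity of $A$ gives
\[
\langle x_\varepsilon - x,\, -\varepsilon x_\varepsilon - Bx_\varepsilon + Bx \rangle \geq 0,
\]
and monotonicity of $B$ yields $\langle x_\varepsilon - x, Bx_\varepsilon - Bx\rangle \geq 0$. Adding these gives $\varepsilon\langle x_\varepsilon, x_\varepsilon - x\rangle \leq 0$, hence $\|x_\varepsilon\|^2 \leq \langle x_\varepsilon, x\rangle \leq \|x_\varepsilon\|\|x\|$. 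Therefore $\|x_\varepsilon\| \leq \|x\|$ for every $x \in \scrS_0$, and in particular $\|x_\varepsilon\| \leq \|x^\ast\|$.

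Next I would identify the weak cluster points. Pick any sequence $\varepsilon_n \downarrow 0$ and extract a weakly convergent subsequence $x_{\varepsilon_{n_k}} \rightharpoonup \bar{x}$ using boundedness. The inclusion $-\varepsilon_{n_k} x_{\varepsilon_{n_k}} \in (A+B)x_{\varepsilon_{n_k}}$ combined with $\varepsilon_{n_k} x_{\varepsilon_{n_k}} \to 0$ strongly and the fact that the graph of the maximally monotone operator $A+B$ is sequentially closed in the weak–strong product topology (see \cite{BC2}) implies $0 \in (A+B)\bar{x}$, i.e., $\bar{x} \in \scrS_0$. Weak lower semicontinuity of the norm and the bound from the previous step give $\|\bar{x}\| \leq \liminf_k \|x_{\varepsilon_{n_k}}\| \leq \|x^\ast\|$, so by uniqueness of the minimum norm element of the closed convex set $\scrS_0$ we conclude $\bar{x} = x^\ast$. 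Since the limit is independent of the subsequence, the whole net satisfies $x_\varepsilon \rightharpoonup x^\ast$.

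Finally, strong convergence follows routinely: combining $\|x_\varepsilon\| \leq \|x^\ast\|$ with weak lower semicontinuity $\|x^\ast\| \leq \liminf_{\varepsilon \to 0} \|x_\varepsilon\|$ yields $\|x_\varepsilon\| \to \|x^\ast\|$, which together with weak convergence and the Kadec–Klee property of Hilbert space gives $x_\varepsilon \to x^\ast$ in norm. The only step that requires some care is the graph-closure argument, which relies on maximal monotonicity of $A+B$ (guaranteed in our setting because $B$ has full domain and is continuous, so Minty–Rockafellar ensures $A+B$ is maximally monotone); once this is in place, the rest of the proof is essentially automatic.
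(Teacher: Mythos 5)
Your proof is correct and is essentially the classical argument that the paper itself only cites (Bruck; \cite[Lemma~4]{CPS}): monotonicity of $A$ and $B$ gives $\|x_\varepsilon\|^2\le\langle x_\varepsilon,x\rangle$ for every $x\in\scrS_0$, hence boundedness; weak--strong sequential closedness of the graph of the maximally monotone sum $A+B$ (maximal by Rockafellar's theorem, since $\dom B=\scrH$ and $B$ is continuous) forces every weak cluster point to be the minimum norm element of the closed convex set $\scrS_0$; and the norm bound plus the Kadec--Klee property upgrades this to strong convergence. No gaps.
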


Lemma \ref{lem:x_eps} implies that the net $(x_{\varepsilon})_{\varepsilon>0}\subset\scrH$ is locally bounded. The next result establishes continuity and differentiability properties of the trajectory $\varepsilon \mapsto x_\eps$. The proof relies on the characterization of zeros of a monotone operator via its resolvent, and can be found in \cite[page 533]{AC6}. For the reader's convenience, we include it here as well. 	

\begin{lemma}\label{diff}
	Let $\varepsilon_1, \varepsilon_2 >0$. Then
	\begin{align*}
	\norm{x_{\varepsilon_1} - x_{\varepsilon_2}} \leq \frac{\norm{x_{\varepsilon_1}}}{\varepsilon_2}\abs{\varepsilon_1 - \varepsilon_2},
	\end{align*}
	i.e. $\varepsilon\mapsto x_\eps$ is locally Lipschitz continuous on $(0,+\infty)$, and therefore differentiable almost everywhere. Furthermore,
	\begin{align*}
	\left\| \frac{\dif }{\dif\varepsilon} x_\varepsilon\right\| \leq \frac{\norm{x_\varepsilon}}{\varepsilon}\ \ \forall \varepsilon \in (0, +\infty).
	\end{align*}
\end{lemma}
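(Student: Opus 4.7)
The plan is to exploit the monotonicity of $A$ and $B$ at the defining inclusions for $x_{\varepsilon_1}$ and $x_{\varepsilon_2}$. From $x_{\varepsilon_i}\in\scrS_{\varepsilon_i}$ we get $-\varepsilon_i x_{\varepsilon_i}-Bx_{\varepsilon_i}\in Ax_{\varepsilon_i}$ for $i=1,2$. First I would write monotonicity of $A$ at the two pairs $(x_{\varepsilon_1},-\varepsilon_1 x_{\varepsilon_1}-Bx_{\varepsilon_1})$ and $(x_{\varepsilon_2},-\varepsilon_2 x_{\varepsilon_2}-Bx_{\varepsilon_2})$, together with monotonicity of $B$ at the pair $(x_{\varepsilon_1},x_{\varepsilon_2})$, and add the two inequalities so that the $B$-terms cancel. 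This yields
\[
\langle x_{\varepsilon_1}-x_{\varepsilon_2},\,\varepsilon_2 x_{\varepsilon_2}-\varepsilon_1 x_{\varepsilon_1}\rangle\geq 0.
\]

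Next I would split the right-hand vector as $\varepsilon_2 x_{\varepsilon_2}-\varepsilon_1 x_{\varepsilon_1}=-\varepsilon_2(x_{\varepsilon_1}-x_{\varepsilon_2})+(\varepsilon_2-\varepsilon_1)x_{\varepsilon_1}$, which rearranges the last display into
\[
\varepsilon_2\|x_{\varepsilon_1}-x_{\varepsilon_2}\|^2\leq(\varepsilon_2-\varepsilon_1)\langle x_{\varepsilon_1}-x_{\varepsilon_2},x_{\varepsilon_1}\rangle.
\]
A single Cauchy--Schwarz step (and division by $\|x_{\varepsilon_1}-x_{\varepsilon_2}\|$, the case $x_{\varepsilon_1}=x_{\varepsilon_2}$ being trivial) then gives the first inequality of the lemma.

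For the regularity statement, the just-established bound shows that on any compact subinterval $[a,b]\subset(0,+\infty)$ the map $\varepsilon\mapsto x_\varepsilon$ is Lipschitz with constant $(\sup_{\varepsilon\in[a,b]}\|x_\varepsilon\|)/a$; boundedness of $\varepsilon\mapsto\|x_\varepsilon\|$ on $[a,b]$ follows from Lemma~\ref{lem:x_eps} (which gives boundedness near $0$) together with the Lipschitz estimate itself (which handles any compact set away from $0$). Since $\scrH$ has the Radon--Nikodym property, every locally Lipschitz $\scrH$-valued function on an open interval of $\mathbb{R}$ is differentiable almost everywhere. Finally, fixing $\varepsilon_2=\varepsilon$ at a point of differentiability, dividing the first inequality by $|\varepsilon_1-\varepsilon|$ and letting $\varepsilon_1\to\varepsilon$ produces the pointwise bound $\|\tfrac{d}{d\varepsilon}x_\varepsilon\|\leq\|x_\varepsilon\|/\varepsilon$.

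I do not expect a real obstacle here: this is the standard two-parameter monotonicity trick of Attouch--Cominetti. The only non-elementary ingredient is the a.e.\ differentiability of locally Lipschitz Hilbert-space valued curves, which is a classical consequence of the Radon--Nikodym property of $\scrH$ and does not require any structural assumption beyond what is already in force.
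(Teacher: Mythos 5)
Your proof is correct, but it follows a genuinely different route from the paper's. The paper writes $x_\varepsilon = J_{\frac{1}{\varepsilon}(A+B)}(0)$ and then applies the resolvent estimate \eqref{eq:Res} with $M=A+B$, $\lambda=1/\varepsilon_1$, $\mu=1/\varepsilon_2$, which immediately gives $\|x_{\varepsilon_1}-x_{\varepsilon_2}\|\le |1-\varepsilon_1/\varepsilon_2|\,\|x_{\varepsilon_1}\|$; the derivative bound then comes from the same one-sided difference quotient you use. You instead run the two-point monotonicity argument directly on the inclusions $-\varepsilon_i x_{\varepsilon_i}-Bx_{\varepsilon_i}\in Ax_{\varepsilon_i}$, add the monotonicity inequalities for $A$ and $B$, and finish with the decomposition $\varepsilon_2 x_{\varepsilon_2}-\varepsilon_1 x_{\varepsilon_1}=-\varepsilon_2(x_{\varepsilon_1}-x_{\varepsilon_2})+(\varepsilon_2-\varepsilon_1)x_{\varepsilon_1}$ and Cauchy--Schwarz; your algebra is right and yields exactly the stated bound. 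What your approach buys is that it uses only plain monotonicity of $A$ and $B$ separately, so it sidesteps the fact (used implicitly by the paper) that $A+B$ is maximally monotone with a well-defined single-valued resolvent --- true here because $B$ is monotone, Lipschitz and everywhere defined, but not spelled out; what the paper's approach buys is brevity, since everything reduces to a quotable resolvent/Yosida inequality. Your explicit justification of a.e.\ differentiability via the Radon--Nikodym property of $\scrH$ is also correct and slightly more careful than the paper, which asserts it without comment, and your limiting step at points of differentiability matches the paper's. One small remark: the circularity you flag between the Lipschitz constant and boundedness of $\varepsilon\mapsto\|x_\varepsilon\|$ on $[a,b]$ is resolved exactly as you say, by fixing $\varepsilon_1$ in the first inequality, so no gap remains.
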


\begin{proof}
	First, we observe that, for $\varepsilon>0$, $0 \in Ax_\varepsilon + Bx_\varepsilon + \varepsilon x_\varepsilon$ is equivalent to
$$x_\varepsilon = \left(\operatorname{Id} + \frac{1}{\varepsilon}(A+B)\right)^{-1}(0)=J_{\frac{1}{\varepsilon}(A+B)}(0).$$ Using this fact, and combining it with relation \eqref{eq:Res}, we obtain
	\begin{align*}
\left\| x_{\varepsilon_{1}} - x_{\varepsilon_2}\right\| = \left\| J_{\frac{1}{\varepsilon_1}(A+B)}(0) - J_{\frac{1}{\varepsilon_2}(A+B)}(0)\right\|\leq \left| 1- \frac{\varepsilon_{1}}{\varepsilon_2}\right| \cdot \norm{x_{\varepsilon_1}},
	\end{align*}
	which is equivalent to
	\begin{align*}
\norm{x_{\varepsilon_1} - x_{\varepsilon_2}} \leq \frac{\norm{x_{\varepsilon_1}}}{\varepsilon_2}\abs{\varepsilon_1 - \varepsilon_2}.
	\end{align*}
This proves the first statement.

For the second statement, we note that the previous inequality yields for $\eps=\varepsilon_{1}$ and $\varepsilon_{2}=\eps+h$ the estimate
	\begin{align*}
	0\leq \frac{\norm{x_{\varepsilon} - x_{\varepsilon + h}}}{h} \leq \frac{\norm{x_{\varepsilon}}}{\eps+h}\ \ \forall h\in (0, +\infty).
	\end{align*}
	Passing to the limit $h \to 0$ completes the proof.
\end{proof}

\subsection{Dynamical systems}
In our analysis, we will make use of the following standard terminology from dynamical systems theory.

A continuous function $f:[0,T]\to\scrH$ (where $T>0$) is said to be \textit{absolutely continuous} when its distributional derivative is Lebesgue integrable on $[0,T]$.


We remark that this definition implies that an absolutely continuous function is differentiable almost everywhere, and its derivative coincides with its distributional derivative almost everywhere. Moreover, one can recover the function from its derivative via the integration formula $f(t)=f(0)+\int_{0}^{t}g(s)\dif s$ for all $t\in[0,T]$. 

The solutions of the dynamical systems we are considering in this paper are understood in the following sense.

\begin{definition}
We say that $x:[0,+\infty)\to\scrH$ is a \textit{strong global solution with initial condition $x_{0}\in\scrH$} of the dynamical system
\begin{equation}\label{eq:Dyn}
\left\{\begin{array}{l}
\dot{x}(t) = f(t,x(t))  \\
x(0) = x_0, \end{array}\right.
\end{equation}
where $f : [0,+\infty) \times \scrH \to \scrH$, if the following properties are satisfied:
\begin{itemize}
\item[(a)] $x:[0,+\infty)\to\scrH$ is absolutely continuous on each interval $[0,T],0<T<+\infty$;
\item[(b)] it holds $\dot{x}(t)=f(t,x(t))$  for almost every $t\geq 0$;
\item[(c)] $x(0)=x_{0}$.
\end{itemize}
\end{definition}

Existence and strong uniqueness of nonautonomous systems of the form \eqref{eq:Dyn} can be proven by means of the classical Cauchy-Lipschitz Theorem (see, for instance, \cite[][Proposition 6.2.1]{HAR} or \cite[][Theorem 54]{SON}). To use this, we need to ensure the following properties enjoyed by the vector field $f(\cdot, \dagger)$.
\begin{theorem}\label{th:existence}
Let $f:[0,\infty)\times\scrH\to\scrH$ be a given function satisfying:
\begin{itemize}
\item[(f1)] $f(\cdot,x):[0,+\infty)\to\scrH$ is measurable for each $x\in\scrH$;
\item[(f2)] $f(t,\cdot):\scrH\to\scrH$ is continuous for each $t\geq 0$;
\item[(f3)] there exists a function $\ell(\cdot)\in L^{1}_{\text{loc}}(\R_+;\R)$ such that 
\begin{equation}
\norm{f(t,x)-f(t,y)}\leq\ell(t)\norm{x-y}\qquad\forall t\in [0, b]\ \forall b\in \R_+\ \forall x,y\in\scrH;
\end{equation}
\item[(f4)] for each $x\in\scrH$ there exists a function $\Delta(\cdot)\in L^{1}_{\text{loc}}(\R_+;\R)$ such that 
\begin{equation}
\norm{f(t,x)}\leq\Delta(t)\qquad\forall t\in [0, b]\ \forall b\in \R_+. 
\end{equation}
\end{itemize}
Then, the dynamical system \eqref{eq:Dyn} admits a unique strong solution $t\mapsto x(t)$, $t\geq 0$. 
\end{theorem}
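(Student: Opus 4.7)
The plan is to recast \eqref{eq:Dyn} as the integral fixed-point equation $x(t)=x_{0}+\int_{0}^{t}f(s,x(s))\,ds$ on an arbitrary bounded interval $[0,T]$ and to apply the Banach contraction principle on $C([0,T];\scrH)$ equipped with a Bielecki-type weighted norm. Global existence will then follow by gluing, since $T>0$ is arbitrary and uniqueness on each $[0,T]$ propagates through restriction to subintervals.

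Fix $T>0$ and set $L(t)\eqdef\int_{0}^{t}\ell(s)\,ds$, which is finite on $[0,T]$ by (f3). On the Banach space $C([0,T];\scrH)$ I introduce the norm $\norm{y}_{w}\eqdef\sup_{t\in[0,T]}e^{-2L(t)}\norm{y(t)}$, equivalent to the uniform norm. For any $y\in C([0,T];\scrH)$, hypotheses (f1) and (f2) together with continuity of $y$ imply that $s\mapsto f(s,y(s))$ is strongly measurable (approximate $y$ uniformly by piecewise constant functions and use (f1) on each piece, then pass to the limit via (f2)), while (f3)--(f4) bound its norm pointwise by the locally integrable function $\Delta(s)+\ell(s)\norm{y(s)-y(0)}$, where $\Delta$ is the function from (f4) corresponding to $y(0)\in\scrH$. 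Bochner integrability therefore holds, and the operator
\[
(\Phi y)(t)\eqdef x_{0}+\int_{0}^{t}f(s,y(s))\,ds
\]
is a well-defined continuous map from $C([0,T];\scrH)$ to itself.

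The Lipschitz condition (f3) yields the pointwise estimate $\norm{(\Phi y_{1})(t)-(\Phi y_{2})(t)}\leq\int_{0}^{t}\ell(s)\norm{y_{1}(s)-y_{2}(s)}\,ds$. Inserting $1=e^{2L(s)}e^{-2L(s)}$ and using $\int_{0}^{t}\ell(s)e^{2L(s)}\,ds=\tfrac{1}{2}(e^{2L(t)}-1)$, I get $\norm{\Phi y_{1}-\Phi y_{2}}_{w}\leq\tfrac{1}{2}\norm{y_{1}-y_{2}}_{w}$, so $\Phi$ is a strict contraction with ratio $1/2$ regardless of $T$. Its unique fixed point $x$, being the indefinite Bochner integral of a Bochner integrable function shifted by $x_{0}$, is absolutely continuous on $[0,T]$ and satisfies $\dot{x}(t)=f(t,x(t))$ for almost every $t\in[0,T]$, with the initial condition $x(0)=x_{0}$ encoded directly in $\Phi$. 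Because $T>0$ was arbitrary and the restriction to $[0,T']\subseteq[0,T]$ of the fixed point on $[0,T]$ is the fixed point on $[0,T']$, the local solutions are consistent and glue into a unique strong global solution.

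The only mildly subtle step is the strong (Bochner) measurability of the composition $s\mapsto f(s,y(s))$ from the separate measurability in $t$ and continuity in $x$: this is the classical Carath\'eodory-type argument sketched above, and it is the only ingredient that goes beyond routine fixed-point manipulations. Everything else reduces to the estimates already provided by (f3)--(f4).
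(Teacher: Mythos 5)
Your proposal is correct. Note, however, that the paper does not actually prove Theorem \ref{th:existence}: it simply invokes the classical Cauchy--Lipschitz theorem in the Carath\'eodory setting, citing \cite[Proposition 6.2.1]{HAR} and \cite[Theorem 54]{SON}. What you have written is a self-contained proof that essentially reproduces the standard argument behind those references: reformulate \eqref{eq:Dyn} as the integral equation $x(t)=x_{0}+\int_{0}^{t}f(s,x(s))\,ds$, establish strong measurability of $s\mapsto f(s,y(s))$ by the piecewise-constant approximation argument using (f1) and (f2), obtain Bochner integrability from (f3)--(f4), and apply the Banach fixed point theorem in $C([0,T];\scrH)$ with the Bielecki weight $e^{-2L(t)}$, $L(t)=\int_{0}^{t}\ell(s)\,ds$, which makes $\Phi$ a $\tfrac12$-contraction on every bounded interval; gluing over $T$ then gives the global solution. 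Two small points are worth making explicit: for uniqueness you should note that any strong solution in the sense of the paper's definition is absolutely continuous and hence recovered from its derivative by the integration formula recalled in Section \ref{sec:prelims}, so it is a fixed point of $\Phi$ and coincides with the one you constructed; and the identity $\int_{0}^{t}\ell(s)e^{2L(s)}\,ds=\tfrac12\bigl(e^{2L(t)}-1\bigr)$ uses that $\ell$ may be taken nonnegative (harmless, since $\ell$ can be replaced by $\max\{\ell,0\}$) together with the chain rule for absolutely continuous functions. With these remarks, your argument is a complete and valid substitute for the citation, at the modest cost of re-deriving a classical result.
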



\section{A Tikhonov regularized Krasnoselskii–Mann dynamical system}
\label{sec:km}
Let $T : \scrH \to \scrH$ be a nonexpansive mapping with $\Fix (T) \neq \emptyset$. 
We are interested in investigating the trajectories of the following dynamical system
\begin{align}\label{TKM}
\left\{\begin{array}{ll} \dot{x}(t) = \lambda(t)\left[T(x(t)) - x(t)\right] - \epsilon(t)x(t)  \\
x(0) = x_0. \end{array}\right.
\end{align}
where  $x_0 \in \scrH$ is a given reference point, and $\lambda(\cdot)$ and $\epsilon(\cdot)$ are user-defined functions, satisfying the following standing assumption:
\begin{assumption}\label{ass:lambda}
$\lambda : [0,+\infty) \to (0,1]$ and $\epsilon : [0,+\infty) \to [0,+\infty)$ are Lebesgue measurable functions.
\end{assumption}
Motivated by \cite{BC7}, where it is shown that the trajectories of the dynamical system
$$
\left\{\begin{array}{ll} \dot{x}(t) = \lambda(t)\big(T(x(t)) - x(t)\big)\\
x(0) = x_0, \end{array}\right.
$$
converge weakly towards a fixed point of $T$, and \cite{CPS}, where the strong convergence of the trajectories of a dynamical system involving a maximally monotone operator is induced by means of a Tikhonov regularization, we show that, under mild hypotheses, the trajectories of \eqref{TKM} strongly converge to $\Pr_{\Fix(T)}(0)$, the minimum norm fixed point of $T$. Moreover we also address the question about viability of trajectories in case where $T$ is defined on a nonempty, closed and convex set $D \subseteq \scrH$.

\subsection{Existence and uniqueness of global solutions}
\label{sec:existence}
Existence and uniqueness of solutions to the dynamics \eqref{TKM} follow from the general existence statement, i.e. Theorem \ref{th:existence}. First, notice that the dynamical system \eqref{TKM} can be rewritten in the form of \eqref{eq:Dyn} where $f : [0,+\infty) \times \scrH \to \scrH$ is defined by $f(t,x)\eqdef \lambda(t)[T(x) - x] - \epsilon(t)x$. This shows that properties $(f1)$ and $(f2)$ in Theorem \ref{th:existence} are satisfied. It remains to verify properties $(f3)$ and $(f4)$. 

\begin{lemma}\label{pr1-km}
When $\epsilon \in L^1_{\text{loc}}(\R_+;\R)$, then, for each $x_0 \in \scrH$, there exists a unique strong global solution of \eqref{TKM}.
\end{lemma}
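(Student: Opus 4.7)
The plan is to invoke \Cref{th:existence} with the vector field
\[
f(t,x) \eqdef \lambda(t)[T(x) - x] - \epsilon(t)x.
\]
As the paragraph preceding the lemma already notes, properties (f1) and (f2) are immediate: for fixed $x\in\scrH$ the map $t\mapsto f(t,x)$ is measurable because $\lambda$ and $\epsilon$ are measurable by \Cref{ass:lambda}, and for fixed $t\geq 0$ the map $x\mapsto f(t,x)$ is continuous because $T$ is nonexpansive and hence continuous. So the substance of the proof is just to verify the two integrability conditions (f3) and (f4).

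For (f3), I would use the triangle inequality together with the nonexpansiveness of $T$ and the bound $\lambda(t)\in(0,1]$ to estimate, for arbitrary $x,y\in\scrH$ and $t\geq 0$,
\[
\|f(t,x)-f(t,y)\|\leq \lambda(t)\|T(x)-T(y)\|+\lambda(t)\|x-y\|+\epsilon(t)\|x-y\|\leq \bigl(2+\epsilon(t)\bigr)\|x-y\|.
\]
Setting $\ell(t)\eqdef 2+\epsilon(t)$ gives the desired Lipschitz function, and $\ell\in L^{1}_{\text{loc}}(\R_+;\R)$ precisely because of the hypothesis $\epsilon\in L^{1}_{\text{loc}}(\R_+;\R)$ of the lemma (constants are locally integrable on $\R_+$).

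For (f4), I fix $x\in\scrH$ and estimate
\[
\|f(t,x)\|\leq \lambda(t)\|T(x)-x\|+\epsilon(t)\|x\|\leq \|T(x)-x\|+\epsilon(t)\|x\|,
\]
so the function $\Delta(t)\eqdef \|T(x)-x\|+\epsilon(t)\|x\|$ dominates $\|f(t,\cdot)\|$ on every bounded interval and belongs to $L^{1}_{\text{loc}}(\R_+;\R)$ for the same reason as above. With (f1)--(f4) verified, \Cref{th:existence} delivers a unique strong global solution $x:[0,+\infty)\to\scrH$ of \eqref{TKM} with $x(0)=x_0$.

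There is essentially no obstacle here; the argument is mechanical once $\lambda(t)\leq 1$ and the $L^1_{\text{loc}}$ assumption on $\epsilon$ are used to absorb the unbounded terms. The only conceptual point worth flagging is that the local integrability of $\epsilon$ is exactly what is needed to make both $\ell$ and $\Delta$ locally integrable, so the hypothesis $\epsilon\in L^{1}_{\text{loc}}(\R_+;\R)$ in the statement is not cosmetic — it is the minimal regularity demanded by the Cauchy--Lipschitz framework underlying \Cref{th:existence}.
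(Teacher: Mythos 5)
Your proposal is correct and follows essentially the same route as the paper: both verify (f1)--(f4) of \Cref{th:existence} for $f(t,x)=\lambda(t)[T(x)-x]-\epsilon(t)x$, using nonexpansiveness of $T$, the bound $\lambda(t)\le 1$ and local integrability of $\epsilon$. The only cosmetic difference is in (f4), where you estimate $\|f(t,x)\|\le \|T(x)-x\|+\epsilon(t)\|x\|$ directly at the given point, while the paper compares with a fixed point $\bar x\in\Fix(T)$; your variant is equally valid (and does not even need $\Fix(T)\neq\emptyset$).
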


\begin{proof}
$(i)$ Let $x,y \in \scrH$, then, since $T$ is nonexpansive, we have
\begin{align*}
\|f(t,x) - f(t,y)\| \leq 2 \lambda(t) \|x-y\| + \epsilon(t) \|x-y\| = [2 \lambda(t) + \epsilon(t)] \|x-y\|.
\end{align*}
Since $\lambda$ is bounded from above and due to the assumption made on $\epsilon$, one has $\ell(\cdot)\eqdef 2\lambda(\cdot) + \epsilon(\cdot) \in L^1_{\text{loc}}(\R_+;\R)$, so that $(f3)$ holds. 

$(ii)$ For $x \in \scrH$ and $\bar{x}\in\Fix(T)$ one has 
\begin{align*}
\norm{f(t,x)}\leq \norm{f(t,\bar{x})}+\norm{f(t,x)-f(t,\bar{x})}\leq \eps(t)\norm{\bar{x}}+\ell(t)\norm{x-\bar{x}}\equiv\Delta(t) 
\end{align*}
for any $t\in [0, +\infty)$. Existence and uniqueness now follow from Theorem \ref{th:existence}.
\end{proof}

\subsection{Convergence of the trajectories: first approach}

The following observation, which is based on a time rescaling argument similar to \cite[Lemma~4.1]{AC2}, will be fundamental for the convergence analysis of the trajectories. We give it without proof since it can be derived as a special case of Theorem \ref{rescalD}.

\begin{theorem}\label{equiv}
Let $\tau_1 : [0,+\infty) \to [0,+\infty)$ be the function which is implicitly defined by 
\[ 
\int_{0}^{\tau_1(t)} \lambda(s) \dif s = t, \quad\tau_{1}(0)=0.
\]
Similarly, let $\tau_2 : [0,+\infty) \to [0,+\infty)$ be the function given by 
\begin{align*}
 \tau_2(t) \eqdef \int_{0}^{t} \lambda(s) \dif s. 
 \end{align*}
 Set $\tilde{\epsilon} \eqdef \epsilon \circ \tau_1$, $\tilde{\lambda} \eqdef \lambda \circ \tau_1$, and consider the system
\begin{align}\label{TKM1}
\left\{\begin{array}{ll} 
\dot{u}(t) = T(u(t)) -  u(t) - \frac{\tilde{\epsilon}(t)}{\tilde{\lambda}(t)} u(t)  \\
u(0) = x_0, \end{array}\right.
\end{align}
where $x_0\in \scrH$. If $x$ is a strong solution of \eqref{TKM}, then $u\eqdef x \circ \tau_1$ is a strong solution of the system \eqref{TKM1}. Conversely, if $u$ is a strong solution of \eqref{TKM1}, then $x\eqdef u \circ \tau_2$ is a strong solution of the system \eqref{TKM}.
\end{theorem}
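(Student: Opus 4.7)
The approach is the standard time-change argument linking the two ODEs through the mutually inverse maps $\tau_1$ and $\tau_2$. The key observations are that $\tau_2'=\lambda$ almost everywhere, that $\tau_1$ is the inverse of $\tau_2$, and that the chain rule identifies the two right-hand sides after rescaling. Because strong solutions are only absolutely continuous, some care is required to justify the chain rule and the change-of-variables formula in this regularity class.

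First I would collect the basic properties of the rescaling maps. Since $0<\lambda\leq 1$ is measurable, $\tau_2(t)=\int_0^t\lambda(s)\dif s$ is $1$-Lipschitz, strictly increasing, and satisfies $\tau_2'(t)=\lambda(t)$ for almost every $t$. Under the (implicit) assumption $\int_0^{+\infty}\lambda(s)\dif s=+\infty$ that is needed for $\tau_1$ to be defined on all of $[0,+\infty)$, the inverse $\tau_1$ is continuous and strictly increasing. Because $\tau_2$ is Lipschitz it satisfies Luzin's N-property, so by Zareckii's theorem $\tau_1$ is locally absolutely continuous; combined with the derivative-of-inverse formula, this gives $\tau_1'(t)=1/\tilde\lambda(t)$ almost everywhere.

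For the forward direction, let $x$ be a strong solution of \eqref{TKM} and define $u\eqdef x\circ\tau_1$. Starting from the integral representation $x(s)=x_0+\int_0^s\dot x(r)\dif r$ evaluated at $s=\tau_1(t)$, and performing the change of variables $r=\tau_1(\sigma)$ (whose validity only uses that $\tau_2$ is strictly increasing and Lipschitz), I would derive
\begin{equation*}
u(t)=x_0+\int_0^t\Bigl[T(u(\sigma))-u(\sigma)-\frac{\tilde\epsilon(\sigma)}{\tilde\lambda(\sigma)}u(\sigma)\Bigr]\dif\sigma.
\end{equation*}
The integrand is locally Lebesgue integrable: the first two terms are continuous on compacts because $T$ is nonexpansive and $u$ is continuous, while
\begin{equation*}
\int_0^t\frac{\tilde\epsilon(\sigma)}{\tilde\lambda(\sigma)}\norm{u(\sigma)}\dif\sigma=\int_0^{\tau_1(t)}\epsilon(r)\norm{x(r)}\dif r,
\end{equation*}
which is finite by $\epsilon\in L^1_{\text{loc}}(\R_+;\R)$ together with local boundedness of $x$. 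Hence $u$ is locally absolutely continuous and its derivative coincides a.e. with the integrand, which is exactly the vector field appearing in \eqref{TKM1}. The initial condition $u(0)=x(\tau_1(0))=x_0$ is immediate.

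For the converse direction, suppose $u$ is a strong solution of \eqref{TKM1} and set $x\eqdef u\circ\tau_2$. Since $\tau_2$ is $1$-Lipschitz, the composition $x$ is locally absolutely continuous, and the classical chain rule for absolutely continuous functions gives $\dot x(t)=\dot u(\tau_2(t))\lambda(t)$ almost everywhere. Substituting \eqref{TKM1} into this expression and using the identities $\tilde\lambda(\tau_2(t))=\lambda(t)$ and $\tilde\epsilon(\tau_2(t))=\epsilon(t)$ recovers \eqref{TKM}; the initial condition is again obtained by evaluating at $t=0$. The main technical obstacle is preservation of absolute continuity of the composition $x\circ\tau_1$ in the forward direction, since $\tau_1$ need not be Lipschitz; routing everything through a change-of-variables formula that only invokes the Lipschitz, strictly increasing map $\tau_2$ sidesteps this difficulty cleanly.
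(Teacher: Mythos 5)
Your proposal is correct and follows essentially the same route as the paper, which proves this statement as the special case $D=\scrH$, $y=0$ of the more general rescaling result (Theorem \ref{rescalD}): compose with $\tau_1$ resp. $\tau_2$, use $\dot{\tau_1}=1/\tilde{\lambda}$, $\dot{\tau_2}=\lambda$ and $\tau_1\circ\tau_2=\Id$, and apply the chain rule. The only difference is that you make explicit the measure-theoretic justifications (local absolute continuity of $\tau_1$ via Zareckii's theorem, the integral representation and change of variables, and the implicit requirement $\int_0^{+\infty}\lambda(s)\dif s=+\infty$ for $\tau_1$ to be defined on all of $[0,+\infty)$) which the paper's chain-rule computation leaves implicit.
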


Theorem \ref{equiv} suggests that one can also study the dynamical system \eqref{TKM1} instead of \eqref{TKM}. Moreover, in \cite[Theorem 9]{CPS} the strong convergence of the trajectories of the differential inclusion
\begin{align}\label{A}
\left\{\begin{array}{ll} - \dot{u}(t) \in A(u(t)) + \epsilon(t)u(t)  \\
u(0) = x_0, \end{array}\right.
\end{align}
where $A : \scrH \rightrightarrows \scrH$ is a maximally monotone operator such that $A^{-1}(0)\neq\emptyset$, towards the minimum norm zero of $A$ was obtained provided that 
$\lim_{t\to\infty}\epsilon(t) = 0$, $\epsilon \notin L^{1}(\R_+;\R)$ and $\abs{\dot{\epsilon}}\in  L^{1}(\R_+;\R)$. 
The connection between \eqref{A} and \eqref{TKM} (as well as \eqref{TKM1}) is achieved through the fact that the nonexpansiveness of $T$ guarantees that the operator $A \eqdef \Id - T$ is maximally monotone and, furthermore, $x \in A^{-1}(0)$ holds if and only if $x \in \Fix T$.

Now we establish the convergence of the trajectories of the dynamical system \eqref{TKM}, noting that the employed hypotheses coincide with those of \cite[Theorem 9]{CPS} when $\lambda (t) =1$ for all $t\in [0, +\infty)$.

\begin{theorem}\label{main2}
	Let $t\mapsto x(t)$, $t\geq 0$, be the strong solution of \eqref{TKM} with initial condition $x^{0}\in\scrH$, and assume that
	\begin{align*}
	(i) ~&\int_{0}^{+\infty} \epsilon(t) dt = +\infty, \\[5pt]
	(ii) ~&\int_{0}^{+\infty} \lambda(t) dt = +\infty,  \\[5pt]
	(iii) ~& \epsilon \mbox { and } \lambda \mbox{ are absolutely continuous and } \frac{\epsilon(t)}{\lambda(t)} \to 0 \text{ as } t \to +\infty, \\[5pt]
	(iv) ~&\int_{0}^{+\infty} \left| \frac{d}{dt} \left(\frac{\epsilon(t)}{\lambda(t)}\right) \right| dt < +\infty.
	\end{align*}
	Then $x(t) \to \Pr_{\Fix(T)}(0)$ as $t \to +\infty$.
\end{theorem}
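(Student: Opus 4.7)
The plan is to reduce the proof to the known strong convergence result for the Tikhonov regularized differential inclusion \eqref{A} \textpar{i.e., \cite[Theorem 9]{CPS}} via the time rescaling furnished by Theorem~\ref{equiv}. Since $T$ is nonexpansive, the operator $A\eqdef\Id-T$ is maximally monotone with $A^{-1}(0)=\Fix(T)\neq\emptyset$. If $u$ is the strong solution of \eqref{TKM1}, then \eqref{TKM1} can be rewritten as
\begin{equation*}
-\dot{u}(t)\in A(u(t))+\tilde{\mu}(t)u(t),\qquad u(0)=x_{0},
\end{equation*}
where $\tilde{\mu}(t)\eqdef\tilde{\epsilon}(t)/\tilde{\lambda}(t)$. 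So it suffices to verify that $\tilde{\mu}$ fulfills the three hypotheses of \cite[Theorem 9]{CPS}, namely $\tilde{\mu}(t)\to 0$, $\tilde{\mu}\notin L^{1}(\R_{+};\R)$, and $|\dot{\tilde{\mu}}|\in L^{1}(\R_{+};\R)$.

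The key computation is the change of variables $s=\tau_{1}(t)$. From the defining relation $\int_{0}^{\tau_{1}(t)}\lambda(s)\dif s=t$ we get $\tau_{1}'(t)=1/\lambda(\tau_{1}(t))=1/\tilde{\lambda}(t)$, and from (ii) we also obtain $\tau_{1}(t)\to+\infty$ as $t\to+\infty$, so that $ds=\dif t/\tilde{\lambda}(t)$, equivalently $\dif t=\lambda(s)\dif s$. The convergence $\tilde{\mu}(t)\to 0$ is then immediate from (iii). For the non-integrability, the same substitution yields
\begin{equation*}
\int_{0}^{T}\frac{\tilde{\epsilon}(t)}{\tilde{\lambda}(t)}\dif t=\int_{0}^{\tau_{1}(T)}\frac{\epsilon(s)}{\lambda(s)}\lambda(s)\dif s=\int_{0}^{\tau_{1}(T)}\epsilon(s)\dif s\longrightarrow+\infty
\end{equation*}
by (i). Finally, writing $g(s)\eqdef\epsilon(s)/\lambda(s)$ we have $\tilde{\mu}(t)=g(\tau_{1}(t))$, so by the chain rule \textpar{justified a.e.\ by (iii) and $\tau_{1}\in W^{1,\infty}_{\mathrm{loc}}$}, $\dot{\tilde{\mu}}(t)=\dot{g}(\tau_{1}(t))/\tilde{\lambda}(t)$, and therefore
\begin{equation*}
\int_{0}^{+\infty}|\dot{\tilde{\mu}}(t)|\dif t=\int_{0}^{+\infty}\frac{|\dot{g}(\tau_{1}(t))|}{\tilde{\lambda}(t)}\dif t=\int_{0}^{+\infty}|\dot{g}(s)|\dif s<+\infty,
\end{equation*}
by (iv).

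With the three conditions verified, \cite[Theorem 9]{CPS} gives that $u(t)$ strongly converges to the minimum norm element of $A^{-1}(0)=\Fix(T)$, i.e.\ to $\Pr_{\Fix(T)}(0)$. Using Theorem~\ref{equiv} to transfer back via $x=u\circ\tau_{2}$, and noting that $\tau_{2}(t)=\int_{0}^{t}\lambda(s)\dif s\to+\infty$ by (ii), we conclude $x(t)=u(\tau_{2}(t))\to\Pr_{\Fix(T)}(0)$ as $t\to+\infty$.

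I expect the main technical obstacle to be the careful verification of the chain-rule identity for $\dot{\tilde{\mu}}$: the absolute continuity hypothesis on $\epsilon$ and $\lambda$ in (iii) only guarantees that $g=\epsilon/\lambda$ is differentiable a.e.\ \textpar{away from zeros of $\lambda$, which one must rule out or treat carefully}, and one has to make sure the composition with $\tau_{1}$ preserves absolute continuity on bounded intervals so that the substitution formula is legitimate. Everything else is a clean change of variables together with a direct quotation of the cited monotone inclusion result.
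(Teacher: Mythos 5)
Your proposal is correct and follows essentially the same route as the paper: reduce to \eqref{TKM1} via Theorem~\ref{equiv}, verify the three hypotheses of \cite[Theorem 9]{CPS} for $\tilde{\epsilon}/\tilde{\lambda}$ through the change of variables $s=\tau_{1}(t)$ with $\dot{\tau_1}(t)\lambda(\tau_1(t))=1$, and transfer back. Your worry about zeros of $\lambda$ is moot, since Assumption~\ref{ass:lambda} takes $\lambda$ with values in $(0,1]$, which is exactly what the paper relies on when differentiating $\epsilon/\lambda$.
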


\begin{proof}
By Theorem \ref{equiv}, the dynamical system \eqref{TKM1} has a strong solution, too. Since $\Id - T$ is maximally monotone, and $x \in \Fix T$ holds if and only if $x \in (\Id - T)^{-1}(0)$, we verify first that the function $\frac{\tilde{\epsilon}}{\tilde{\lambda}}=\frac{\eps\circ\tau_{1}}{\lambda\circ\tau_{1}}$ fulfills the assumptions of \cite[Theorem 9]{CPS}.
First,
\[ \int_{0}^{+\infty} \frac{\tilde{\epsilon}(t)}{\tilde{\lambda}(t)} dt = \int_{0}^{+\infty} \dot{\tau_1}(t) \epsilon(\tau_1(t)) dt = \int_{0}^{+\infty} \epsilon(s) ds = +\infty. \]
Further, for almost all $t\geq 0$ it holds, taking into consideration that $\dot{\tau_1}(t) \lambda (t) = 1$,
\begin{align*}
\frac{d}{dt} \left(\frac{\tilde{\epsilon}(t)}{\tilde{\lambda}(t)}\right)
&= \frac{\dot{\tilde{\epsilon}}(t) \tilde{\lambda}(t) - \tilde{\epsilon}(t) \dot{\tilde{\lambda}}(t)}{\tilde{\lambda}(t)^2} \\
&= \frac{\dot{\tau_1}(t)\dot{\epsilon}(\tau_1(t)) \tilde{\lambda}(t) - \tilde{\epsilon}(t) \dot{\tau_1}(t) \dot{\lambda}(\tau_1(t))}{\tilde{\lambda}(t)^2} \\
&= \frac{\dot{\epsilon}(\tau_1(t))}{\lambda(\tau_1(t))^2} - \frac{\epsilon(\tau_1(t)) \dot{\lambda}(\tau_1(t))}{\lambda(\tau_1(t))^3},
\end{align*}
hence
\begin{align*}
\int_{0}^{+\infty} \left| \frac{d}{dt} \left(\frac{\tilde{\epsilon}(t)}{\tilde{\lambda}(t)}\right) \right| dt &= \int_{0}^{+\infty} \left| \frac{\dot{\epsilon}(\tau_1(t))}{\lambda(\tau_1(t))^2} - \frac{\epsilon(\tau_1(t)) \dot{\lambda}(\tau_1(t))}{\lambda(\tau_1(t))^3} \right| dt \nonumber \\
&= \int_{0}^{+\infty} \left| \frac{\dot{\epsilon}(s)}{\lambda(s)} - \frac{\epsilon(s) \dot{\lambda}(s)}{\lambda(s)^2} \right| ds = \int_{0}^{+\infty} \left| \frac{d}{dt} \left(\frac{\epsilon(t)}{\lambda(t)}\right) \right| dt < +\infty,
\end{align*}
where we used that $\tau_1(t) \to +\infty$ as $t \to +\infty$. Therefore the strong convergence of the strong solution of \eqref{TKM1} with initial condition $x^{0}\in\scrH$ towards $\Pr_{\Fix(T)}(0)$ is proven. The assertion follows by Theorem \ref{equiv}.
\end{proof}

\subsection{Convergence of the trajectories: second approach}

Our second convergence statement concerns a generalization of the system \eqref{TKM} where $T$ maps from a closed and convex set $D\subseteq\scrH$ to $D$. For such dynamics, a key condition is to ensure invariance with respect to the domain $D$ of the trajectory $t\mapsto x(t)$, $t\geq 0$, when issued from an initial condition $x_{0}\in D$. Such viability results are key to the control of dynamical systems \cite{Aub91,AubCel84}. To that end, we consider the differential equation
\begin{align}\label{TKMDl}
\left\{\begin{array}{ll} \dot{x}(t) = \lambda(t)\big(T(x(t)) - x(t)\big) - \epsilon(t)(x(t)-y)  \\
x(0) = x_0 \in D, \end{array}\right.
\end{align}
where $y \in D$ is fixed reference point and $\Fix (T)\neq\emptyset$. In the very recent note \cite{PAV} the strong convergence of the trajectory for the case $\lambda(t) = 1$ for all $t\in [0,+\infty)$ towards $\Pr_{\Fix(T)}(y)$ has been demonstrated in \cite[Theorem 4.1]{PAV} by assuming that $\epsilon \in L^1_{\text{loc}}(\R_+; \R)$ is absolutely continuous and nonincreasing, $\epsilon(t) \to 0$ as $t \to +\infty$, $\int_{0}^{+\infty} \epsilon(s) ds = +\infty$ and $\lim_{t \to +\infty} {\dot{\epsilon}(t)}/{\epsilon^2(t)} = 0$. 

First we give the existence and uniqueness statement for the strong global solution of \eqref{TKMDl}, whose proof is skipped as it follows Proposition \ref{pr1-km} and \cite[Proposition 4.1]{PAV}. 

\begin{proposition}\label{ExistenceUniquenessD}
Assume that $\epsilon \in L^1_{\text{loc}}(\R_+; \R)$. Then, for any pair $(x_{0},y)\in D\times D$ the dynamical system \eqref{TKMDl} admits a unique strong global solution $t\mapsto x(t)$, $t\geq 0$ which leaves the domain $D$ forward invariant, i.e. $x(t) \in D$ for all $t \in [0,+\infty)$.
\end{proposition}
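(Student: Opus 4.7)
The plan is to split the argument into two parts: (i) existence and uniqueness of a strong global solution via Theorem \ref{th:existence}, and (ii) forward invariance of $D$, which is the genuine novelty over Proposition \ref{pr1-km}.

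For (i), since the right-hand side of \eqref{TKMDl} only makes sense for $x\in D$, I would first replace $T$ by its nonexpansive extension $\tilde{T}\eqdef T\circ\Pr_{D}\colon\scrH\to D$; both $\Pr_{D}$ and $T$ are $1$-Lipschitz, so $\tilde{T}$ is nonexpansive on the whole space, $\Fix(\tilde{T})=\Fix(T)\neq\emptyset$, and $\tilde{T}$ agrees with $T$ on $D$. For the extended field $\tilde{f}(t,x)\eqdef\lambda(t)(\tilde{T}(x)-x)-\epsilon(t)(x-y)$ the Lipschitz and local growth estimates are produced exactly as in the proof of Proposition \ref{pr1-km}, with $\ell(t)=2\lambda(t)+\epsilon(t)$ and $\Delta(t)$ obtained by evaluating at a reference point $\bar{x}\in\Fix(T)\subseteq D$. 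Because $\lambda\leq 1$ and $\epsilon\in L^{1}_{\text{loc}}(\R_{+};\R)$, hypotheses (f1)--(f4) of Theorem \ref{th:existence} are in force, so the extended equation admits a unique strong global solution $x(\cdot)$.

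The main obstacle is part (ii). I would set up a Gronwall inequality for the squared distance
\[ \phi(t)\eqdef \tfrac{1}{2}\|x(t)-p(t)\|^{2}=\tfrac{1}{2}\dist^{2}(x(t),D),\qquad p(t)\eqdef\Pr_{D}(x(t)). \]
Since $z\mapsto\tfrac{1}{2}\dist^{2}(z,D)$ is Fr\'echet differentiable on $\scrH$ with $1$-Lipschitz gradient $z\mapsto z-\Pr_{D}(z)$, and $x(\cdot)$ is absolutely continuous, the chain rule yields $\dot{\phi}(t)=\inner{x(t)-p(t),\dot{x}(t)}$ almost everywhere. Substituting the dynamics, using $\tilde{T}(x(t))=T(p(t))\in D$ together with the projection characterization $\inner{x(t)-p(t),w-p(t)}\leq 0$ for every $w\in D$ applied at $w=T(p(t))$ and at $w=y$, one extracts a $-\|x(t)-p(t)\|^{2}$ contribution from each of the $\lambda$- and $\epsilon$-terms to obtain
\[ \dot{\phi}(t)\leq -\bigl(\lambda(t)+\epsilon(t)\bigr)\|x(t)-p(t)\|^{2}=-2\bigl(\lambda(t)+\epsilon(t)\bigr)\phi(t)\quad\text{a.e.} \]
Combined with $\phi(0)=0$ (because $x_{0}\in D$) and the nonnegativity of $\lambda+\epsilon$, Gronwall's lemma forces $\phi\equiv 0$, i.e.\ $x(t)\in D$ for all $t\geq 0$.

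Once the trajectory is known to stay in $D$ we have $\tilde{T}(x(t))=T(\Pr_{D}(x(t)))=T(x(t))$, so $x(\cdot)$ is in fact a strong global solution of \eqref{TKMDl}, and uniqueness among $D$-valued solutions is inherited from uniqueness for the extended equation. The only subtle step of the whole argument is the sign-chasing in the derivation of the differential inequality for $\phi$: one must expand $T(p(t))-x(t)=(T(p(t))-p(t))+(p(t)-x(t))$ and $x(t)-y=(x(t)-p(t))+(p(t)-y)$ and then invoke the two projection inequalities with the correct signs; once this is done, Gronwall closes the argument cleanly.
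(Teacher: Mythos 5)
Your argument is correct, but it is not the route the paper takes: the paper gives no proof at all for this proposition, deferring existence/uniqueness to the argument of Lemma~\ref{pr1-km} and the forward invariance to \cite[Proposition~4.1]{PAV}, where invariance is obtained in the classical Bolte-type way, namely by rewriting the equation as $\dot{x}(t)+(\lambda(t)+\epsilon(t))x(t)=\lambda(t)T(x(t))+\epsilon(t)y$ and using the integrating factor $e^{\theta(t)}$, $\theta(t)=\int_0^t(\lambda+\epsilon)$, to represent $x(t)=e^{-\theta(t)}x_0+\int_0^t e^{\theta(s)-\theta(t)}\bigl(\lambda(s)T(x(s))+\epsilon(s)y\bigr)\,ds$ as a generalized convex combination of points of $D$, which lies in $D$ because $D$ is closed and convex. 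Your alternative is genuinely different and self-contained: extending the vector field via $\tilde{T}=T\circ \Pr_D$ (nonexpansive, $\Fix(\tilde T)=\Fix(T)$) makes Theorem~\ref{th:existence} directly applicable on all of $\scrH$ with $\ell=2\lambda+\epsilon$, and the invariance then follows from the differential inequality for $\phi(t)=\tfrac12\dist^2(x(t),D)$; your sign-chasing is right, since $\inner{x(t)-p(t),T(p(t))-p(t)}\le 0$ and $\inner{x(t)-p(t),y-p(t)}\le 0$ give $\dot\phi\le-2(\lambda+\epsilon)\phi\le 0$ a.e., and with $\phi(0)=0$, $\phi\ge 0$ and $\phi$ absolutely continuous this already forces $\phi\equiv 0$ without even invoking Gronwall. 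What your approach buys is that one never has to run a fixed-point/Picard argument in the space of $D$-valued continuous functions (or quote \cite{PAV}): global existence comes for free from the extension, and invariance is a two-line energy estimate; the price is the (standard) facts that $\tfrac12\dist^2(\cdot,D)$ is Fréchet differentiable with the $1$-Lipschitz gradient $\Id-\Pr_D$ and that the chain rule holds a.e. along absolutely continuous curves. Your closing remark on uniqueness is also handled correctly: any solution of \eqref{TKMDl} is necessarily $D$-valued (by continuity and closedness of $D$, since $T$ is only defined on $D$), hence solves the extended equation and coincides with its unique solution.
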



A result similar to Theorem \ref{equiv} for \eqref{TKMDl} is provided next.

\begin{theorem}\label{rescalD}
Let $\tau_1 : [0,+\infty) \to [0,+\infty)$ be the function implicitly defined by 
\begin{align*}
 \int_{0}^{\tau_1(t)} \lambda(s) ds = t, \quad\tau_{1}(0)=0.
 \end{align*}
Furthermore, let $\tau_2 : [0,+\infty) \to [0,+\infty)$ the function given by 
\[ 
\tau_2(t) \eqdef \int_{0}^{t} \lambda(s) ds. 
\] 
Set $\tilde{\epsilon} \eqdef \epsilon \circ \tau_1$, $\tilde{\lambda} \eqdef \lambda \circ \tau_1$ and consider the system
\begin{align}\label{TKMlD}
\left\{\begin{array}{ll} \dot{u}(t) = T(u(t)) -  u(t) - \frac{\tilde{\epsilon}(t)}{\tilde{\lambda}(t)} (u(t) - y)  \\
u(0) = x_0, \end{array}\right.
\end{align}
where $(x_0, y)\in D\times D$. If $t\mapsto x(t)$, $t\geq 0$, is the strong solution of \eqref{TKMDl}, then $u\eqdef x \circ \tau_1$ is a strong solution of the system \eqref{TKMlD}. Conversely, if $u$ is a strong solution of \eqref{TKMlD}, then $x\eqdef u\circ \tau_2$ is a strong solution of the system \eqref{TKMDl}.
\end{theorem}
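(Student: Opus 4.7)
The plan is to exploit the fact that $\tau_{1}$ and $\tau_{2}$ are mutual inverses and then carry out a chain-rule calculation on the composed trajectories. Since $\lambda$ takes values in $(0,1]$, the map $\tau_{2}$ is absolutely continuous, strictly increasing, and onto $[0,+\infty)$ (using $\int_{0}^{+\infty}\lambda(s)\dif s=+\infty$, which will be in force in the convergence theorem); the implicit equation $\int_{0}^{\tau_{1}(t)}\lambda(s)\dif s=t$ then identifies $\tau_{1}$ as the inverse of $\tau_{2}$. Both functions are locally absolutely continuous, with $\dot{\tau}_{2}(t)=\lambda(t)$ a.e.\ and $\dot{\tau}_{1}(t)=1/\lambda(\tau_{1}(t))=1/\tilde{\lambda}(t)$ a.e., by differentiating the identity $\tau_{2}(\tau_{1}(t))=t$.

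For the forward direction, assume $x:[0,+\infty)\to\scrH$ is a strong solution of \eqref{TKMDl} and set $u\eqdef x\circ\tau_{1}$. Since $\tau_{1}$ is locally absolutely continuous and monotone and $x$ is locally absolutely continuous on $[0,T]$ for every $T>0$, the composition $u$ is locally absolutely continuous (also $x(\tau_{1}(t))\in D$ by Proposition \ref{ExistenceUniquenessD}, so the viability is preserved). The chain rule applies a.e., yielding
\begin{align*}
\dot{u}(t) &= \dot{\tau}_{1}(t)\,\dot{x}(\tau_{1}(t))\\
&= \frac{1}{\tilde{\lambda}(t)}\Bigl[\lambda(\tau_{1}(t))\bigl(T(x(\tau_{1}(t)))-x(\tau_{1}(t))\bigr)-\epsilon(\tau_{1}(t))(x(\tau_{1}(t))-y)\Bigr]\\
&= T(u(t))-u(t)-\frac{\tilde{\epsilon}(t)}{\tilde{\lambda}(t)}(u(t)-y),
\end{align*}
and $u(0)=x(\tau_{1}(0))=x(0)=x_{0}$, so $u$ solves \eqref{TKMlD}.

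For the converse, let $u$ be a strong solution of \eqref{TKMlD} and set $x\eqdef u\circ\tau_{2}$. Using $\dot{\tau}_{2}(t)=\lambda(t)$ and $\tau_{1}(\tau_{2}(t))=t$ (so $\tilde{\lambda}(\tau_{2}(t))=\lambda(t)$ and $\tilde{\epsilon}(\tau_{2}(t))=\epsilon(t)$), the chain rule gives a.e.
\begin{align*}
\dot{x}(t) &= \lambda(t)\,\dot{u}(\tau_{2}(t))\\
&= \lambda(t)\Bigl[T(u(\tau_{2}(t)))-u(\tau_{2}(t))-\frac{\epsilon(t)}{\lambda(t)}(u(\tau_{2}(t))-y)\Bigr]\\
&= \lambda(t)\bigl(T(x(t))-x(t)\bigr)-\epsilon(t)(x(t)-y),
\end{align*}
and $x(0)=u(\tau_{2}(0))=u(0)=x_{0}$, verifying \eqref{TKMDl}.

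The only non-routine issue is the chain rule for absolutely continuous compositions. The difficulty would arise if $\tau_{1}$ or $\tau_{2}$ mapped a set of positive measure into a null set on which $\dot{x}$ or $\dot{u}$ fails to exist; monotonicity of the reparametrizations together with $\lambda>0$ rules this out, since preimages of null sets under a strictly monotone absolutely continuous bijection with a.e.\ positive derivative are again null. With this observation the identification $\dot{u}(t)=\dot{\tau}_{1}(t)\dot{x}(\tau_{1}(t))$ a.e. (and symmetrically for the reverse direction) is justified, and the rest is algebraic manipulation as displayed above.
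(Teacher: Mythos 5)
Your proposal is correct and follows essentially the same route as the paper: identify $\tau_1$ and $\tau_2$ as mutually inverse reparametrizations with $\dot{\tau}_2=\lambda$ and $\dot{\tau}_1=1/\tilde{\lambda}$, invoke the viability of the trajectories in $D$, and transfer the differential equation by the chain rule in both directions. The only difference is that you spell out the technical justification of the chain rule for the absolutely continuous compositions (and the role of $\lambda>0$), which the paper leaves implicit.
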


\begin{proof}
Let $t\mapsto x(t)$, $t\geq 0$, be a strong solution of \eqref{TKMDl}. Since we already know that $x(t) \in D$ for all $t \geq 0$, the first line of \eqref{TKMDl} written at point $T_1(t)$ and multiplied by $\dot{\tau_1}(t)$ yields
\[\dot{\tau_1}(t) \dot{x}(\tau_1(t)) = \dot{\tau_1}(t) \lambda(\tau_1(t))[T(x(\tau_1(t))) - x(\tau_1(t))] - \dot{\tau_1}(t) \epsilon(\tau_1(t))[x(\tau_1(t)) - y]. \]
Since $u(t)\eqdef x(\tau_1(t))$, $\dot{u}(t) = \dot{\tau_1}(t) \dot{x}(\tau_1(t))$ and $\dot{\tau_1}(t) = 1/\lambda(\tau_1(t))$, we obtain from the line above \[\dot{u}(t) = T(u(t)) -  u(t) - \frac{\tilde{\epsilon}(t)}{\tilde{\lambda}(t)} (u(t) - y)\]
for almost every $t\geq 0$. Moreover, $u(0)=x(\tau_1(0))=x_0$.

Now, let $u$ be a strong solution of $\eqref{TKM1}$. From \cite[Proposition~4.1]{PAV} we deduce that that $u(t) \in D$ for all $t \geq 0$. The first line of \eqref{TKMlD} written at point $\tau_2(t)$ and multiplied by $\dot{\tau_2}(t)$ reads
\[ 
\dot{\tau_2}(t) \dot{u}(\tau_2(t)) = \dot{\tau_2}(t) \big(T(u(\tau_2(t))) -  u(\tau_2(t))\big) - \dot{\tau_2}(t) \frac{\tilde{\epsilon}(\tau_2(t))}{\tilde{\lambda}(\tau_2(t))} [u(\tau_2(t)) - y]\ \forall t\geq 0. 
\]
Observing that $x(t) = u(\tau_2(t))$, $\dot{x}(t) = \dot{\tau_2}(t) \dot{u}(\tau_2(t))$, $\dot{\tau_2}(t) = \lambda(t)$ and $\tau_1 \circ \tau_2 = \Id$, the previous line becomes for almost every $t\geq 0$
\[
\dot{x}(t) = \lambda(t)\big(T(x(t)) - x(t)\big) - \epsilon(t)(x(t) - y).
\]
Moreover, $x(0)=u(0)=x_0$. This concludes the proof.
\end{proof}

Employing the time rescaling arguments from Theorem \ref{rescalD}, we are able to derive the following statement, which extends \cite[Theorem 4.1]{PAV} that is recovered as special case when $\lambda(t) = 1$ for all $t\in [0,+\infty)$.

\begin{theorem}\label{main1}
	Let $t\mapsto x(t)$, $t\geq 0$, be the strong solution of \eqref{TKMDl} and assume that
	\begin{enumerate}
	\item[(i)] $\int_{0}^{+\infty} \epsilon(t) dt = +\infty$, 
	\item[(ii)] $\int_{0}^{+\infty} \lambda(t) dt = +\infty$,
	\item[(iii)] $\epsilon$  and $\lambda$  are absolutely continuous, $\frac{\epsilon(\cdot)}{\lambda(\cdot)}$  is nonincreasing and $\frac{\epsilon(t)}{\lambda(t)} \to 0$  as $t \to +\infty$,
	\item[(iv)] $\frac{\dot{\epsilon}(t)}{\epsilon(t)^2} - \frac{\dot{\lambda}(t)}{\lambda(t) \epsilon(t)} \to 0$  as $t \to +\infty$.
	\end{enumerate}
		Then $x(t) \to \Pr_{\Fix(T)}(y)$ as $t \to +\infty$.
\end{theorem}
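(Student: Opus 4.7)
The plan is to mimic the proof of Theorem \ref{main2}: apply the time rescaling from Theorem \ref{rescalD} to reduce \eqref{TKMDl} to the system \eqref{TKMlD}, which has the exact form of the dynamical system studied in \cite[Theorem 4.1]{PAV} with nonexpansive operator $T$ on $D$, anchor point $y$, unit speed $\tilde{\lambda}\equiv 1$, and regularization function $\eta\eqdef \tilde{\epsilon}/\tilde{\lambda}=(\epsilon/\lambda)\circ\tau_{1}$. Once the hypotheses of \cite[Theorem 4.1]{PAV} are checked for $\eta$, that result yields $u(t)\to \Pr_{\Fix(T)}(y)$ for the rescaled trajectory, and pulling back through $x=u\circ \tau_{2}$ together with the observation that $\tau_{2}(t)\to +\infty$ (because $\int_{0}^{+\infty}\lambda(s)\dif s=+\infty$) gives the claim.

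The bulk of the work is therefore to verify the four hypotheses of \cite[Theorem 4.1]{PAV} for $\eta=(\epsilon/\lambda)\circ\tau_{1}$. First, since $\tau_{1}$ is absolutely continuous (with $\dot\tau_{1}=1/(\lambda\circ\tau_{1})$ bounded on bounded intervals) and $\epsilon,\lambda$ are absolutely continuous with $\lambda>0$, the composition $\eta$ is locally absolutely continuous. Monotonicity of $\tau_{1}$ combined with assumption (iii) gives that $\eta$ is nonincreasing and $\eta(t)\to 0$ as $t\to+\infty$ (using $\tau_{1}(t)\to +\infty$, which follows from (ii)). Second, the non-integrability $\int_{0}^{+\infty}\eta(t)\dif t=+\infty$ is obtained by the change of variables $s=\tau_{1}(t)$, $\dif s=\dif t/\lambda(\tau_{1}(t))$, yielding
\[
\int_{0}^{+\infty}\frac{\epsilon(\tau_{1}(t))}{\lambda(\tau_{1}(t))}\dif t=\int_{0}^{+\infty}\epsilon(s)\dif s=+\infty
\]
by assumption (i).

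The delicate verification is the asymptotic condition $\dot{\eta}(t)/\eta(t)^{2}\to 0$. Using $\dot\tau_{1}(t)=1/\lambda(\tau_{1}(t))$ and the quotient rule,
\[
\dot{\eta}(t)=\frac{\dot\epsilon(\tau_{1}(t))}{\lambda(\tau_{1}(t))^{2}}-\frac{\epsilon(\tau_{1}(t))\dot\lambda(\tau_{1}(t))}{\lambda(\tau_{1}(t))^{3}},
\]
while $\eta(t)^{2}=\epsilon(\tau_{1}(t))^{2}/\lambda(\tau_{1}(t))^{2}$, so
\[
\frac{\dot{\eta}(t)}{\eta(t)^{2}}=\frac{\dot\epsilon(\tau_{1}(t))}{\epsilon(\tau_{1}(t))^{2}}-\frac{\dot\lambda(\tau_{1}(t))}{\lambda(\tau_{1}(t))\,\epsilon(\tau_{1}(t))}.
\]
Since $\tau_{1}(t)\to+\infty$, assumption (iv) forces this ratio to vanish as $t\to+\infty$. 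This identity is the only nontrivial piece of the argument, and mirrors the derivative computation already used in the proof of Theorem \ref{main2}.

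With all four hypotheses of \cite[Theorem 4.1]{PAV} verified, the strong solution $u$ of \eqref{TKMlD} converges strongly to $\Pr_{\Fix(T)}(y)$. By Theorem \ref{rescalD}, $x=u\circ\tau_{2}$ is the strong solution of \eqref{TKMDl}, and because $\tau_{2}(t)\to+\infty$ as $t\to+\infty$, we conclude $x(t)\to \Pr_{\Fix(T)}(y)$. The main (minor) obstacle is simply to ensure that all the hypotheses transfer correctly through the time change — in particular the monotonicity of $\eta$ (which requires $\epsilon/\lambda$ itself to be nonincreasing, as imposed in (iii)) and the exact cancellation in the $\dot\eta/\eta^{2}$ computation that matches assumption (iv).
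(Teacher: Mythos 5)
Your proposal follows exactly the paper's route: rescale time via Theorem \ref{rescalD}, verify the hypotheses of \cite[Theorem 4.1]{PAV} for $\tilde{\epsilon}/\tilde{\lambda}=(\epsilon/\lambda)\circ\tau_{1}$ using the change of variables for the divergence of the integral and the quotient-rule identity giving $\dot{\eta}/\eta^{2}$ evaluated along $\tau_{1}$, and then pull back through $\tau_{2}$. This matches the paper's proof (which reuses the derivative computation from Theorem \ref{main2}), with your explicit checks of monotonicity and local absolute continuity of $\eta$ being minor, correct additions.
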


\begin{proof}
	In a similar manner to the proof of Theorem \ref{main2}, due to Theorem \ref{rescalD} it suffices to check the assumptions in \cite[Theorem 4.1]{PAV} for the function ${\tilde{\epsilon}}/{\tilde{\lambda}}$. First, we notice that
	\[ \int_{0}^{+\infty} \frac{\tilde{\epsilon}(t)}{\tilde{\lambda}(t)} dt = \int_{0}^{+\infty} \dot{\tau_1}(t) \epsilon(\tau_1(t)) dt = \int_{0}^{+\infty} \epsilon(s) ds = +\infty, \]
	where we used that $\tau_1(t) \to +\infty$ as $t \to +\infty$. From the proof of Theorem \ref{main2} we know that for almost all $t\geq 0$ one has
	\begin{align*}
	\frac{d}{dt} \frac{\tilde{\epsilon}(t)}{\tilde{\lambda}(t)} = \frac{\dot{\epsilon}(\tau_1(t))}{\lambda(\tau_1(t))^2} - \frac{\epsilon(\tau_1(t)) \dot{\lambda}(\tau_1(t))}{\lambda(\tau_1(t))^3},
	\end{align*}
	The last expression divided by $\left(\frac{\tilde{\epsilon}(t)}{\tilde{\lambda}(t)}\right)^2$ gives
	\[ 
	\frac{\dot{\epsilon}(\tau_1(t))}{\epsilon(\tau_1(t))^2} - \frac{\dot{\lambda}(\tau_1(t))}{\lambda(\tau_1(t)) \epsilon(\tau_1(t))}, 
	\]
	which, due to the assumptions we made on the functions $\epsilon$ and $\lambda$, tends to $0$ as $t \to +\infty$. 
\end{proof}

In the following two remarks we compare the hypotheses of Theorem \ref{main2} and Theorem \ref{main1}, noting that, despite the common assumptions $(i)-(ii)$, they do not fully cover each other.

\begin{remark}
The framework of Theorem \ref{main1} extends the one of Theorem \ref{main2} by allowing the involved operator $T$ to map a closed convex set to itself, the latter being recovered when choosing $D = \scrH$ and $y = 0$. However, in this setting,
fixing $\beta \in (0,1)$ and taking $\epsilon(t) = {1}/{(0.2+t)^\beta}$ and $\lambda(t) = 0.5 \cos(\frac{1}{0.2+t}) + 0.5$, $t\geq 0$, one notes that $\lambda(t) \in [0,1]$ $\forall t\geq 0$, $\int_{0}^{+\infty} \epsilon(t) dt = \int_{0}^{+\infty} \lambda(t) dt = +\infty$ and ${\epsilon(t)}/{\lambda(t)}$ is converging to $0$ as $t \to +\infty$, but there exists an intervall where the function is increasing. Hence assumption (iii) of Theorem \ref{main1} is violated while the corresponding assumption in Theorem \ref{main2} is fulfilled. Moreover,
$$\frac{d}{dt} \left( \frac{\epsilon(t)}{\lambda(t)} \right) = \frac{-\beta (0.1 + t)^{-(\beta + 1)}}{(0.5\cos(\frac{1}{0.2 + t})+0.5)} - \frac{0.5\sin(\frac{1}{0.2 + t})}{(0.5\cos(\frac{1}{0.2 + t})+0.5)^2 (0.2+t)^{\beta + 2}}\ \forall t\geq 0,$$
that is a function of class $L^1(\R_+; \R)$. Hence, for the chosen parameter functions $\epsilon$ and $\lambda$, the assumptions of Theorem \ref{main1} are not satisfied, while the ones of Theorem \ref{main2} are.
\begin{figure}[htbp]
	\centering
	\includegraphics[width=1\textwidth]{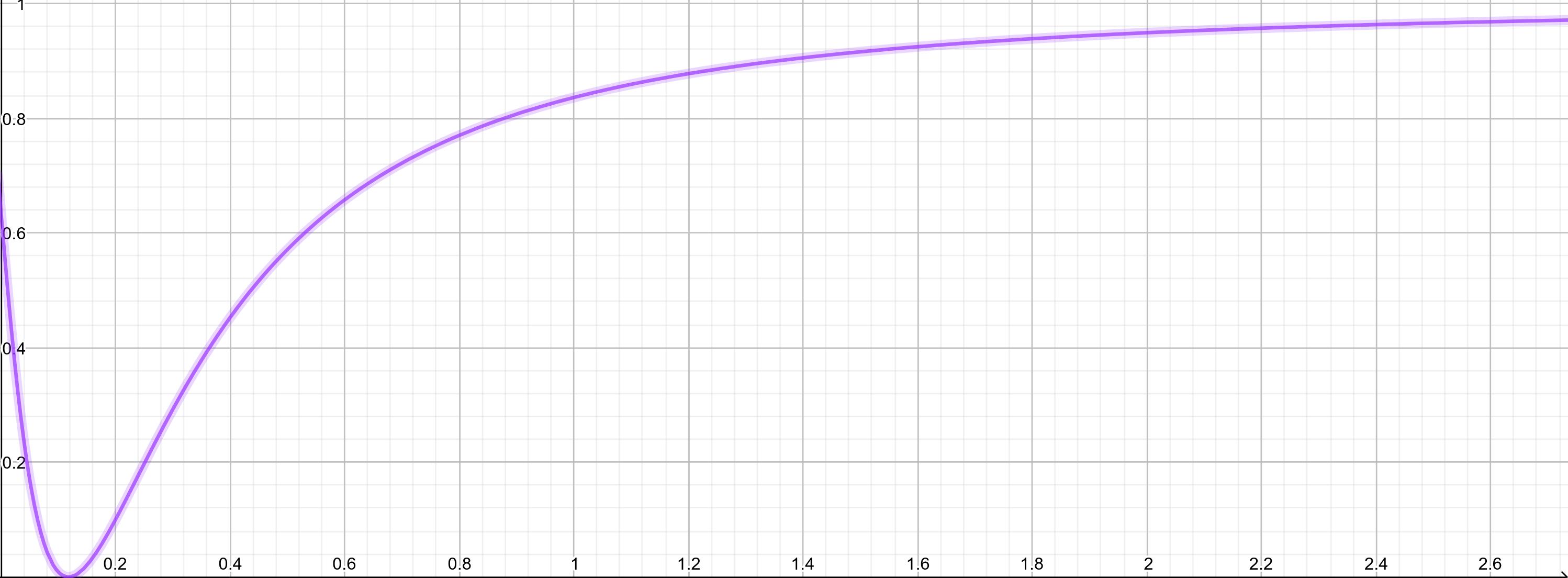}
	\caption{Graph of $\lambda(t) = 0.5 \cos(\frac{1}{0.2+t}) + 0.5$}
\end{figure}
\end{remark}

\begin{remark}
In the situation of Theorem \ref{main1} we consider again the choice $D = \scrH$ and $y = 0$. In this case Theorem \ref{main2} is a special instance of Theorem \ref{main1}. In fact, since $\epsilon/{\lambda}$ is assumed to be nonincreasing and ${\epsilon(t)}/{\lambda(t)} \to 0$ as $t \to +\infty$ we conclude that
$$\int_{0}^{+\infty} \left| \frac{d}{dt} \left(\frac{\epsilon(t)}{\lambda(t)}\right) \right| dt = - \int_{0}^{+\infty} \frac{d}{dt} \left(\frac{\epsilon(t)}{\lambda(t)}\right) dt = - \lim\limits_{s \to +\infty} \frac{\epsilon(s)}{\lambda(s)} + \frac{\epsilon(0)}{\lambda(0)} < +\infty,$$
i.e. assertion (iv) of Theorem \ref{main2} is fulfilled.
\end{remark}

\begin{remark}
One can also compare the hypotheses imposed in Theorem \ref{main2} and Theorem \ref{main1} for guaranteeing the strong convergence of the trajectories of a dynamical system towards a fixed point of $T$ with the ones required in \cite[Theorem 6 and Remark 17]{BC7}, the weakest of them being $(ii)$ of any of Theorem \ref{main2} and Theorem \ref{main1}. Taking also into consideration \cite[Proposition~5 and Theorem 9]{CPS} as well as \cite[Proposition 4.1 and Theorem 4.1]{PAV}, the assumptions of both
Theorem \ref{main2} and Theorem \ref{main1} turn out to be natural for achieving the strong convergence of the trajectories of the dynamical system \eqref{TKM} towards a fixed point of $T$.
\end{remark}

\subsection{Special case: an outer Tikhonov regularized forward-backward dynamical system}

From the analysis of the strong convergence of the trajectories of the Tikhonov regularized Krasnoselskii-Mann dynamical system \eqref{TKM} one can deduce similar assertions for determining zeros of a sum of monotone operators. Let $A : \scrH \rightrightarrows \scrH$ be a maximally monotone operator and $B : \scrH \to \scrH$ a $\beta$-cocoercive operator with $\beta > 0$ such that $\Zer(A+B)$ is nonempty. The dynamical system employed to this end is a Tikhonov regularized version of \cite[equation (14)]{BC7}, namely, when  $\gamma \in (0,2\beta)$, $\epsilon : [0,+\infty) \to [0,+\infty)$ and $\lambda : [0,+\infty) \to [0,({4\beta - \gamma})/({2\beta})]$ are Lebesgue measurable functions, and $x_0 \in \scrH$,
\begin{align}\label{FBx}
	\left\{\begin{array}{ll} \dot{x}(t) = \lambda(t)\big(J_{\gamma A}\big(x(t) - \gamma B(x(t)) \big) - x(t)\big) - \epsilon(t)x(t)  \\
	x(0) = x_0. \end{array}\right.
	\end{align}

Employing either Theorem \ref{main2} or Theorem \ref{main1}, one can derive the following statement.
	
\begin{theorem}\label{FBout}
	 Suppose that either the assumptions of Theorem \ref{main2} or Theorem \ref{main1} made on the parameter functions $\epsilon$ and $\lambda$ are fulfilled. Further, let $x$ be the unique strong global solution of the dynamical system \eqref{FBx}. Then $x(t) \to \Pr_{\Zer(A+B)}(0)$ as $t \to +\infty$.
\end{theorem}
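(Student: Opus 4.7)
The plan is to recognize \eqref{FBx} as a specific instance of the Krasnoselskii-Mann dynamical system \eqref{TKM} after a linear reparametrization of the relaxation weight, and then to transfer the strong convergence conclusion from Theorem \ref{main2} or Theorem \ref{main1}. The key observation is that the forward-backward operator $T \eqdef J_{\gamma A}(\Id - \gamma B)$ is nonexpansive, and more precisely $\alpha$-averaged: since $B$ is $\beta$-cocoercive and $\gamma \in (0, 2\beta)$, standard results on averaged operators (see \cite[Proposition~26.1]{BC2}) give that $T$ is $\alpha$-averaged with $\alpha \eqdef 2\beta/(4\beta - \gamma) \in (1/2,1)$, so that $T = (1-\alpha)\Id + \alpha R$ for some nonexpansive $R : \scrH \to \scrH$. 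The usual resolvent characterization yields $\Fix(R) = \Fix(T) = \Zer(A+B)$, which is nonempty by assumption.

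Using $T(x) - x = \alpha(R(x) - x)$ and setting $\tilde{\lambda}(t) \eqdef \alpha \lambda(t)$, the dynamical system \eqref{FBx} rewrites as
\[
\dot{x}(t) = \tilde{\lambda}(t)\bigl(R(x(t)) - x(t)\bigr) - \epsilon(t)\, x(t), \quad x(0) = x_0.
\]
The range constraint $\lambda(t) \in [0, (4\beta-\gamma)/(2\beta)] = [0, 1/\alpha]$ in the statement of \eqref{FBx} is exactly what guarantees $\tilde{\lambda}(t) \in [0,1]$, placing the system in the framework of \eqref{TKM} with $R$ in the role of the nonexpansive operator. Existence and uniqueness of the strong global solution then follow from Lemma \ref{pr1-km} applied to $R$ and $\tilde{\lambda}$.

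What remains is to verify that the hypotheses of Theorem \ref{main2} (respectively Theorem \ref{main1}, applied with $D = \scrH$ and $y = 0$) on the pair $(\epsilon, \tilde{\lambda})$ are inherited from those imposed on $(\epsilon, \lambda)$. Since $\tilde{\lambda}$ differs from $\lambda$ only by the positive multiplicative constant $\alpha$, each hypothesis transfers immediately: divergence of $\int \tilde{\lambda}$, absolute continuity of $\tilde{\lambda}$, the limit $\epsilon/\tilde{\lambda} \to 0$, monotonicity of $\epsilon/\tilde{\lambda}$, integrability of $|\tfrac{d}{dt}(\epsilon/\tilde{\lambda})|$, and the limit $\dot{\epsilon}/\epsilon^2 - \dot{\tilde{\lambda}}/(\tilde{\lambda}\epsilon) \to 0$ each reduce to their counterparts on $\lambda$ (with the factor $\alpha$ either canceling or being absorbed into a finite constant). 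Invoking the chosen theorem then gives $x(t) \to \Pr_{\Fix(R)}(0) = \Pr_{\Zer(A+B)}(0)$ as $t \to +\infty$. There is no substantial obstacle here; the entire proof is a packaging argument, and the only point demanding care is matching the averagedness parameter of the forward-backward operator with the numerical range prescribed for $\lambda$ in \eqref{FBx}.
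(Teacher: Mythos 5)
Your proposal is correct and follows essentially the same route as the paper: identify $T = J_{\gamma A}\circ(\Id-\gamma B)$ as $\tfrac{2\beta}{4\beta-\gamma}$-averaged, absorb the averagedness constant into the relaxation function so that \eqref{FBx} becomes an instance of \eqref{TKM} driven by the nonexpansive part, and invoke Theorem \ref{main2} or \ref{main1} together with $\Fix(R)=\Fix(T)=\Zer(A+B)$. Your extra remarks on how the hypotheses on $(\epsilon,\lambda)$ transfer to $(\epsilon,\alpha\lambda)$ only make explicit what the paper leaves implicit.
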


\begin{proof}
	Since the resolvent of a maximally monotone operator is firmly nonexpansive it is $1/2$-averaged, see \cite[Remark 4.34(iii)]{BC2}. Moreover, by \cite[Proposition~4.39]{BC2} is ${\gamma}/(2\beta)$-averaged. Combining these two observations with \cite[Proposition 4.44]{BC2} yields that the composed operator $T \eqdef J_{\gamma A} \circ (\Id - \gamma B)$ is ${2\beta}/({4\beta - \gamma})$-averaged. Further, it is immediate that the dynamical system \eqref{FBx} can be equivalently written as
	\begin{align*}
	\left\{\begin{array}{ll} \dot{x}(t) = \lambda(t)(T(x(t)) - x(t)) - \epsilon(t)x(t)  \\
	x(0) = x_0. \end{array}\right.
	\end{align*}
	As $T$ is ${2\beta}/({4\beta - \gamma})$-averaged, there exists a nonexpansive operator $\hat T: \scrH \to \scrH$ such that $T= (1- {2\beta}/({4\beta - \gamma})) \Id + ({2\beta}/({4\beta - \gamma})) \hat T$.	
	Then the dynamical system \eqref{FBx} can be further equivalently written as
	\begin{align*}
	\left\{\begin{array}{ll} \dot{x}(t) = \lambda(t)\frac{2\beta}{4\beta - \gamma}(\hat T(x(t)) - x(t)) - \epsilon(t)x(t)  \\
	x(0) = x_0. \end{array}\right.
	\end{align*}
	
	Since $\Fix \hat T = \Fix T =\Zer(A+B)$ (see \cite[Proposition 26.1(iv)(a)]{BC2}) the assertion follows from Theorem \ref{main2} or Theorem \ref{main1}.
\end{proof}

\begin{remark}
	The strong convergence of the trajectories of a forward-backward dynamical system was achieved in \cite[Theorem 12]{BC7} unde the more demanding hypothesis of uniform monotonicity
	(recall that an operator $T: \scrH \to \scrH$ is said to be \textit{uniformly monotone} if there exists an increasing function $\Phi_T : [0,+\infty) \to [0,+\infty]$ that vanishes only at $0$ such that $\langle x-y, u-v\rangle \geq \Phi_T(\|x - y\|)$ for every $(x, u), (y, v) \in \gr (T)$) imposed on one of the involved operators.
\end{remark}

\section{A Tikhonov regularized forward-backward dynamical system}
\label{sec:fb}
In this section we construct Tikhonov regularized dynamical systems which are strongly converging to solutions of \eqref{eq:MIP}. The problem formulation consists a maximally monotone operator $A : \scrH \rightrightarrows \scrH$ and $B : \scrH \to \scrH$ a $\beta$-cocoercive operator with $\beta > 0$ such that $\Zer(A+B)$ is nonempty. Moreover, for $t\in [0, +\infty)$ denote $B_{\eps (t)}\eqdef B+\eps(t)\Id:\scrH\to\scrH$ and $\Zer(A+B_{\eps(t)})=\{\bar{x}(\eps(t))\}$. We consider the dynamical system
\begin{align}\label{FB}
\left\{\begin{array}{ll} \dot{x}(t) &= \lambda(t) \left(J_{\gamma(t) A} \bigg(x(t) - \gamma(t) (Bx(t) + \epsilon(t)x(t))\bigg) - x(t) \right)  \\
x(0) &= x_0, \end{array}\right.
\end{align}
where $\lambda(\cdot),\eps(\cdot)$ obey Assumption \ref{ass:lambda}, and $\gamma : [0, + \infty) \to (0,2\beta)$.

\begin{remark}
Comparing \eqref{FB} with \eqref{FBx} 
 one can note two differences: First of all, in \eqref{FB} the stepsizes are provided by the function $\gamma : [0, + \infty) \to (0,2\beta)$, while in \eqref{FBx} $\gamma$ is a positive constant lying in the interval $(0,2\beta)$ as well. Secondly, to get \eqref{FBx} from the forward-backward dynamical system (cf. \cite{BC7})
\begin{align}\label{FBo}
\left\{\begin{array}{ll} \dot{x}(t) &= \lambda(t) \big(J_{\gamma A} \left(x(t) - \gamma Bx(t)\right) - x(t) \big)  \\
x(0) &= x_0, \end{array}\right.
\end{align}
an outer perturbation is employed, while for \eqref{FB} an inner one. As illustrated in Section \ref{sec:illustrations}, this leads to different performances in concrete applications.
\end{remark}

\subsection{Existence and uniqueness of strong global solutions}

The dynamical system $\eqref{FB}$ can be rewritten as
\begin{align*}
\left\{\begin{array}{ll} \dot{x}(t) = f(t,x(t))  \\
x(0) = x_0, \end{array}\right.
\end{align*}
where $f : [0,+ \infty) \times \scrH \to \scrH$ is defined by $f(t,x) = \lambda(t)(T_t (x) - x)$, with $T_t \eqdef J_{\gamma(t) A } (\Id - \gamma(t) B_{\epsilon(t)})$. Hence, existence and uniqueness of trajectories follows by verifying the conditions spelled out in Theorem \ref{th:existence}. 

\begin{proposition}\label{pr1-fb}
	Assume that $\epsilon$ is of class $L^1_{\text{loc}}(\R_+;\R)$. Then, for each $x_0 \in \scrH$, there exists a unique strong global solution $t\mapsto x(t)$, $t\geq 0$, of \eqref{FB}.
\end{proposition}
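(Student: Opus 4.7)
The plan is to mirror the argument of Lemma \ref{pr1-km} and verify the four hypotheses (f1)--(f4) of Theorem \ref{th:existence} for the vector field $f(t,x) \eqdef \lambda(t)\bigl(T_t(x) - x\bigr)$ with $T_t \eqdef J_{\gamma(t) A}\circ(\Id - \gamma(t) B_{\eps(t)})$. Assuming (as is standard in this setting) measurability of $\gamma$ alongside Assumption \ref{ass:lambda}, property (f1) follows from the continuous dependence of $J_{\gamma A}x$ on the parameter $\gamma$ guaranteed by \eqref{eq:Res}, together with measurability of $\lambda,\eps,\gamma$; property (f2) is immediate from the $(1/\beta)$-Lipschitz continuity of $B$ (a consequence of $\beta$-cocoercivity) and the nonexpansiveness of the resolvent.

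For the Lipschitz bound (f3), the nonexpansiveness of $J_{\gamma(t) A}$ combined with the fact that $B_{\eps(t)}$ is $\bigl((1/\beta)+\eps(t)\bigr)$-Lipschitz yields $\|T_t(x)-T_t(y)\|\leq\bigl(1+\gamma(t)/\beta+\gamma(t)\eps(t)\bigr)\|x-y\|$. Exploiting $\lambda(t)\leq 1$ and $\gamma(t)<2\beta$, I obtain $\|f(t,x)-f(t,y)\|\leq \ell(t)\|x-y\|$ with $\ell(t)\eqdef 4+2\beta\,\eps(t)$, which lies in $L^1_{\text{loc}}(\R_+;\R)$ by the assumption on $\eps$.

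For the pointwise bound (f4), the key device is the fixed-point identity: any $\bar x\in\Zer(A+B)$ satisfies $\bar x = J_{\gamma(t) A}(\bar x - \gamma(t) B\bar x)$ because $-B\bar x\in A\bar x$. Nonexpansiveness of $J_{\gamma(t) A}$ then gives $\|T_t(\bar x)-\bar x\|\leq\gamma(t)\eps(t)\|\bar x\|$, hence $\|f(t,\bar x)\|\leq 2\beta\,\eps(t)\|\bar x\|$. Combining with (f3) produces $\|f(t,x)\|\leq \ell(t)\|x-\bar x\|+2\beta\,\eps(t)\|\bar x\|\eqdef\Delta(t)$, which is locally integrable for every fixed $x$. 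Theorem \ref{th:existence} then delivers the unique strong global solution.

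The only point requiring care is the estimate at the reference point $\bar x$: without the fixed-point identity one would not be able to cancel the $\gamma(t)B\bar x$ term against $\bar x$ inside the resolvent, and one would be forced to bound $B$ globally, which is not available from cocoercivity alone. That identity is the essential device that keeps the inner-regularization argument as short as the outer-regularization one treated in Lemma \ref{pr1-km}.
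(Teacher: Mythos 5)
Your proposal is correct and follows essentially the same route as the paper: verify (f1)--(f4) of Theorem \ref{th:existence} for $f(t,x)=\lambda(t)\bigl(T_t(x)-x\bigr)$, using the $\bigl(1/\beta+\eps(t)\bigr)$-Lipschitz continuity of $B_{\eps(t)}$ and the nonexpansiveness of the resolvent for (f3), and the identity $\bar x = J_{\gamma(t)A}\bigl(\bar x-\gamma(t)B\bar x\bigr)$ for $\bar x\in\Zer(A+B)$ to obtain the locally integrable majorant in (f4). The only cosmetic difference is that you bound $\|f(t,\bar x)\|$ and then invoke the Lipschitz estimate, whereas the paper bounds $\|T_t(x)-\bar x\|$ directly, both resting on the same fixed-point identity.
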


\begin{proof}
Conditions $(f1),(f2)$ are clearly satisfied. To show $(f3)$, let $x,y \in \scrH$ be arbitrary. Since $B$ is $(1/\beta)$-Lipschitz continuous, the perturbed operator $B_{\epsilon(t)}$ is $(({1}/{\beta}) + \epsilon(t))$-Lipschitz continuous as well. Hence, by nonexpansivity of the resolvent, we obtain for all $t \in [0,+\infty)$
	\begin{align*}
	\|f(t,x) - f(t,y)\| &\leq \lambda(t) \|x-y\| + \lambda(t) \|T_tx - T_t y\| \\
	&\leq \lambda(t) \|x-y\| + \lambda(t) \|(x - y) - \gamma(t)(B_{\epsilon(t)}x - B_{\epsilon(t)}y)\| \\
	&\leq \lambda(t)\left(2 + \gamma(t) \left[\frac{1}{\beta} + \epsilon(t)\right] \right) \|x-y\|
	\end{align*}
	Since $\lambda$ and $\gamma$ are bounded and due to the assumption we imposed on $\epsilon$, one has $\ell(\cdot)\eqdef \lambda(\cdot)\left(2 + \gamma(\cdot) [{1}/{\beta} + \epsilon(\cdot)] \right) \in L^1_{\text{loc}}(\R_+;\R)$. Condition $(f4)$ is verified by first noting that for $\bar{x} \in \Zer(A+B)$, we have $\bar{x} = T_{t}(\bar{x})$ for all $t\geq 0$. 
	Therefore, for all $x \in \scrH$ and all $t\geq 0$ it holds
	\begin{align*}
	\|T_t(x) - \bar{x}\| &= \left\|J_{\gamma(t) A} (x - \gamma(t) B_{\epsilon(t)} x) - J_{\gamma(t) A}(\bar{x} - \gamma(t) B\bar{x})\right\| \nonumber \\
	&\leq \|(x - \bar{x}) - \gamma(t)(B_{\epsilon(t)} x - B\bar{x}) \| \\
	&\leq \|x - \bar{x}\| + \frac{\gamma(t)}{\beta} \|x - \bar{x}\| + \gamma(t) \epsilon(t)\norm{x}\\
	\end{align*}
	Hence, for all $x \in \scrH$ and all $t\geq 0$ one has
	\begin{align*}
	\norm{f(t,x)}&=\lambda(t)\norm{T_{t}(x)-x}\\
	&\leq \lambda(t)\norm{T_{t}(x)-\bar{x}}+\lambda(t)\norm{x-\bar{x}}\\
	&\leq\lambda(t)\left(2+\frac{\gamma(t)}{\beta}\right) \|x - \bar{x}\|+ \lambda(t)\gamma(t) \epsilon(t)\norm{x}
	\equiv\Delta(t).
	\end{align*}
	Therefore, $(f4)$ holds as well.
\end{proof}

\subsection{Convergence of the trajectory}

As a preliminary step for proving the convergence statement of the trajectories of \eqref{FB} towards $P_{\Zer(A+B)}(0)$ we need the following auxiliary result. Recall that $\Zer(A+B_{\eps(t)})= \Fix(T_t) = \{\bar{x}(\eps(t))\}$.

\begin{lemma}\label{le1-fb}
		Let $t\mapsto x(t)$, $t\geq 0$, be the strong global solution of \eqref{FB} and suppose that $\gamma(t) \leq {2 \beta}/{1+ 2\beta \epsilon(t)}$ for all $t \in [0,+\infty)$. Then, for almost all $t \in [0,+\infty)$
		\begin{align*}
		\langle \dot{x}(t), x(t) - \bar{x}(\epsilon(t)) \rangle \leq \frac{\lambda(t)}{2} \gamma(t) \epsilon(t) (\gamma(t) \epsilon(t) - 2) \|x(t) - \bar{x}(\epsilon(t))\|^2.
		\end{align*}
\end{lemma}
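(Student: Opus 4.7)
The plan is to substitute the dynamics into the inner product, convert it into a difference of squared norms via a polarization identity, and then exploit the fact that $\bar{x}(\epsilon(t))$ is a fixed point of $T_t$ together with the stepsize hypothesis to bound $\|T_t(x(t))-\bar{x}(\epsilon(t))\|^{2}$ by a contraction factor strictly less than one.

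More concretely, writing $\gamma=\gamma(t)$, $\epsilon=\epsilon(t)$, $x=x(t)$, $\bar{x}=\bar{x}(\epsilon(t))$, the definition of the dynamics gives $\dot{x}(t)=\lambda(t)(T_t(x)-x)$, so that
\[
\langle \dot{x}(t),x-\bar{x}\rangle=\lambda(t)\langle T_t(x)-x,\,x-\bar{x}\rangle.
\]
Expanding $\|T_t(x)-\bar{x}\|^{2}=\|(T_t(x)-x)+(x-\bar{x})\|^{2}$ yields the identity
\[
\langle T_t(x)-x,\,x-\bar{x}\rangle=\tfrac{1}{2}\bigl(\|T_t(x)-\bar{x}\|^{2}-\|T_t(x)-x\|^{2}-\|x-\bar{x}\|^{2}\bigr),
\]
so the problem reduces to obtaining a sharp upper bound on $\|T_t(x)-\bar{x}\|^{2}$.

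Since $\bar{x}$ is a fixed point of $T_t=J_{\gamma A}\circ(\Id-\gamma B_{\epsilon})$ and the resolvent is nonexpansive, I bound
\[
\|T_t(x)-\bar{x}\|^{2}\leq\|(x-\bar{x})-\gamma(B_{\epsilon}x-B_{\epsilon}\bar{x})\|^{2}.
\]
Writing $B_{\epsilon}x-B_{\epsilon}\bar{x}=(Bx-B\bar{x})+\epsilon(x-\bar{x})$ and denoting $u=x-\bar{x}$, $v=Bx-B\bar{x}$, the right-hand side equals $\|(1-\gamma\epsilon)u-\gamma v\|^{2}$, which expands to
\[
(1-\gamma\epsilon)^{2}\|u\|^{2}-2\gamma(1-\gamma\epsilon)\langle u,v\rangle+\gamma^{2}\|v\|^{2}.
\]
The hypothesis $\gamma\leq 2\beta/(1+2\beta\epsilon)$ forces $\gamma\epsilon<1$, so $1-\gamma\epsilon\geq 0$, and $\beta$-cocoercivity of $B$ gives $\langle u,v\rangle\geq\beta\|v\|^{2}$. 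Substituting yields the estimate
\[
\|T_t(x)-\bar{x}\|^{2}\leq(1-\gamma\epsilon)^{2}\|u\|^{2}+\gamma\bigl(\gamma-2\beta(1-\gamma\epsilon)\bigr)\|v\|^{2},
\]
and the stepsize condition is exactly equivalent to $\gamma-2\beta(1-\gamma\epsilon)\leq 0$, so the $\|v\|^{2}$-term is non-positive and can be dropped, leaving $\|T_t(x)-\bar{x}\|^{2}\leq(1-\gamma\epsilon)^{2}\|x-\bar{x}\|^{2}$.

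Combining this with the polarization identity and discarding the non-positive $-\tfrac{1}{2}\|T_t(x)-x\|^{2}$ term,
\[
\langle T_t(x)-x,\,x-\bar{x}\rangle\leq\tfrac{1}{2}\bigl((1-\gamma\epsilon)^{2}-1\bigr)\|x-\bar{x}\|^{2}=\tfrac{1}{2}\gamma\epsilon(\gamma\epsilon-2)\|x-\bar{x}\|^{2}.
\]
Multiplying by $\lambda(t)$ yields the claim. The only nontrivial step is recognizing that the stepsize hypothesis is exactly the algebraic condition making the contraction factor $(1-\gamma\epsilon)^{2}$ available; the rest is routine manipulation.
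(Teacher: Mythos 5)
Your proof is correct and follows essentially the same route as the paper: the heart of the argument in both cases is the contraction estimate $\|T_t x - T_t y\|^2 \le (1-\gamma(t)\epsilon(t))^2\|x-y\|^2$, obtained from the $\beta$-cocoercivity of $B$, the nonexpansiveness of the resolvent, and the observation that the stepsize hypothesis is exactly $\gamma(t)\le 2\beta(1-\gamma(t)\epsilon(t))$. The only (inessential) difference is bookkeeping: you apply the polarization identity directly to $\langle T_t(x)-x,\,x-\bar{x}\rangle$ after factoring out $\lambda(t)$, whereas the paper expands $2\langle\dot{x},x-\bar{x}\rangle$ via the convex-combination identity \eqref{basic} and drops two nonpositive terms (which additionally uses $\lambda(t)\le 1$); both yield the same bound.
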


\begin{proof}
By $\eqref{basic}$ we get for almost all $t \in [0,+\infty)$
\begin{align}\label{binomial}
	2 \langle \dot{x}(t), x(t) - \bar{x}(\epsilon(t)) \rangle =& \|\dot{x}(t) + x(t) - \bar{x}(\epsilon(t))\|^2 - \|\dot{x}(t)\|^2 - \|x(t) - \bar{x}(\epsilon(t))\|^2 \nonumber \\
	=& \|\lambda(t) (T_t(x(t)) - \bar{x}(\epsilon(t))) + (1 - \lambda(t)) (x(t) - \bar{x}(\epsilon(t))) \|^2  \nonumber \\
	-&  \|\dot{x}(t)\|^2 - \|x(t) -\bar{x}(\epsilon(t))\|^2 \nonumber \\
	=& \lambda(t) \|T_t(x(t)) - \bar{x}(\epsilon(t))\|^2 + (1 - \lambda(t)) \|x(t) - \bar{x}(\epsilon(t))\|^2 \nonumber  \\
	-& \lambda(t) (1 - \lambda(t)) \|T_t(x(t)) - x(t)\|^2 - \|\dot{x}(t)\|^2 - \|x(t) \nonumber \\
	- & \bar{x}(\epsilon(t))\|^2 \nonumber \\
	=& \lambda(t) \|T_t(x(t)) -\bar{x}(\epsilon(t))\|^2 - \lambda(t) \|x(t) -\bar{x}(\epsilon(t))\|^2 \nonumber \\
	- & \lambda(t) (1 - \lambda(t)) \|T_t(x(t)) - x(t)\|^2 - \|\dot{x}(t)\|^2.
\end{align}
On the other hand, for $x,y \in \scrH$ and for all $t \in [0,+\infty)$ we obtain
\begin{align}\label{argument}
	\|(\Id - \gamma(t) B_{\epsilon(t)})x - & (\Id - \gamma(t) B_{\epsilon(t)})y\|^2 = \|(1 - \gamma(t) \epsilon(t))(x-y) - \gamma(t) (Bx - By)\|^2 \nonumber \\
	 = & (1-\gamma(t) \epsilon(t))^2 \|x-y\|^2 + \gamma(t)^2 \|Bx - By\|^2 \nonumber  \\
	 - & 2 \gamma(t) (1 - \gamma(t) \epsilon(t)) \langle x - y, Bx - By \rangle \nonumber \\
 \leq & (1-\gamma(t) \epsilon(t))^2 \|x-y\|^2 \nonumber \\ 
 + & [\gamma(t)^2 - 2\gamma(t)\beta(1 - \gamma(t) \epsilon(t))] \|Bx - By\|^2,
	\end{align}
where we used the $\beta$-cocoercivity of $B$ in the last step and the observation that $\gamma(t) \epsilon(t)\leq 1$ due to the hypothesis.

By assumption, $\gamma(t) \leq 2 \beta (1-\gamma(t) \epsilon(t))$ for all $t \in [0,+\infty)$. Therefore relation \eqref{argument} yields
\begin{align*}
	\|(\Id - \gamma(t) B_{\epsilon(t)})x - (\Id - \gamma(t) B_{\epsilon(t)})y\|^2 \leq (1-\gamma(t) \epsilon(t))^2 \|x-y\|^2,
\end{align*}
and by the nonexpansivity of the resolvent
\begin{align}\label{Lipschitz}
	\|T_t x - T_t y\|^2 \leq (1-\gamma(t) \epsilon(t))^2 \|x-y\|^2\ \forall t \in [0,+\infty).
\end{align}

Combining \eqref{binomial} with \eqref{Lipschitz} by neglecting the two nonpositive terms in the last line of $\eqref{binomial}$ yields for almost all $t \in [0,+\infty)$
\begin{align*}
	2 \langle \dot{x}(t), x(t) - \bar{x}(\epsilon(t)) \rangle &\leq \lambda(t) (1-\gamma(t) \epsilon(t))^2 \|x(t) - \bar{x}(\epsilon(t))\|^2 - \lambda(t) \|x(t) - \bar{x}(\epsilon(t))\|^2 \\
	&= \lambda(t) \gamma(t) \epsilon(t) (\gamma(t) \epsilon(t) - 2) \|x(t) - \bar{x}(\epsilon(t))\|^2.
\end{align*}
This completes the proof.
\end{proof}

The convergence statement follows.

\begin{theorem}\label{th1-fb}
	Let $t\mapsto x(t)$, $t\geq 0$, be the strong solution of \eqref{FB}. Suppose that $\gamma(t) \leq \frac {2 \beta} {1+ 2\beta \epsilon(t)}$ for all $t \in [0,+\infty)$ and that the following properties are fulfilled
	\begin{align*}
	(i) ~& \epsilon \text{ is absolutely continuous and} \epsilon(t)\text{ decreases to } 0 \text{ as } t \to + \infty, \\[5pt]
	(ii) ~&\frac{\dot{\epsilon}(t)}{\epsilon^2(t)\lambda (t)\gamma (t)} \to 0 \text{ as } t \to + \infty, \\[5pt]
	(iii) ~&\int_{0}^{+\infty} \lambda(t) \gamma(t) \epsilon(t) (2 - \gamma(t) \epsilon(t)) dt = + \infty.
	\end{align*}
	Then $x(t) \to P_{\Zer(A+B)}(0)$ as $t \to + \infty$.
\end{theorem}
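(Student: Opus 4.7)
The plan is to exploit the Tikhonov curve $y(t)\eqdef \bar{x}(\epsilon(t))$, which converges to $P_{\Zer(A+B)}(0)$ by Lemma~\ref{lem:x_eps}, and to prove that the trajectory $x(t)$ tracks it, i.e.\ that $v(t)\eqdef\|x(t)-y(t)\|\to 0$ as $t\to+\infty$. The theorem then follows from the triangle inequality together with Lemma~\ref{lem:x_eps}.

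The central step is to establish, almost everywhere, the linear differential inequality
\[
\dot v(t)\;\leq\;-\tfrac{1}{2}\,a(t)\,v(t)+b(t),\qquad a(t)\eqdef\lambda(t)\gamma(t)\epsilon(t)\bigl(2-\gamma(t)\epsilon(t)\bigr),\quad b(t)\eqdef\frac{|\dot\epsilon(t)|\,\|\bar{x}(\epsilon(t))\|}{\epsilon(t)}.
\]
Differentiating $v^2(t)$ yields $\tfrac12\tfrac{d}{dt}v^2(t)=\langle\dot x(t),x(t)-y(t)\rangle-\langle\dot y(t),x(t)-y(t)\rangle$. Lemma~\ref{le1-fb} bounds the first inner product by $-\tfrac{1}{2}a(t)v^2(t)$. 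For the second, the absolute continuity of $\epsilon$ from $(i)$ together with the local Lipschitz continuity of $\epsilon\mapsto\bar{x}(\epsilon)$ provided by Lemma~\ref{diff} makes $t\mapsto y(t)$ locally absolutely continuous with $\|\dot y(t)\|\leq|\dot\epsilon(t)|\|\bar{x}(\epsilon(t))\|/\epsilon(t)=b(t)$ almost everywhere; Cauchy--Schwarz then gives $-\langle\dot y(t),x(t)-y(t)\rangle\leq b(t)v(t)$. A standard regularization of $v=\sqrt{v^2}$ (e.g.\ via $\sqrt{v^2+\delta}$ and $\delta\downarrow 0$) handles the zero set of $v$ and promotes the quadratic estimate to the claimed linear one.

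A Gronwall-type integration with multiplier $\exp\bigl(\tfrac{1}{2}\int_0^{\cdot}a\bigr)$ then delivers
\[
v(t)\;\leq\;v(0)\exp\!\Bigl(-\tfrac{1}{2}\!\int_0^t\! a(s)\,ds\Bigr)+\int_0^t\exp\!\Bigl(-\tfrac{1}{2}\!\int_s^t\! a(r)\,dr\Bigr)b(s)\,ds.
\]
The first summand tends to zero by $(iii)$. For the second, the net $(\bar{x}(\epsilon(t)))_{t\geq 0}$ is bounded by Lemma~\ref{lem:x_eps}, the standing hypothesis $\gamma(t)\leq 2\beta/(1+2\beta\epsilon(t))$ yields $\gamma(t)\epsilon(t)\leq 2\beta\epsilon(t)/(1+2\beta\epsilon(t))<1$ so that $2-\gamma(t)\epsilon(t)\geq 1$, and $\dot\epsilon\leq 0$ by $(i)$; consequently $b(t)/a(t)$ is dominated by a constant multiple of $|\dot\epsilon(t)|/(\epsilon^2(t)\lambda(t)\gamma(t))$, which vanishes by $(ii)$. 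A standard Cesaro-type argument, splitting the integral at some large $T$ and exploiting the identity $\tfrac{d}{ds}\exp\bigl(-\tfrac{1}{2}\int_s^t a\bigr)=\tfrac{1}{2}a(s)\exp\bigl(-\tfrac{1}{2}\int_s^t a\bigr)$ together with $(iii)$ on $[T,+\infty)$ to absorb the pre-$T$ part, then drives the convolution integral to zero.

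The main technical obstacle is the rigorous passage from the quadratic inequality on $v^2$ to the linear one on $v$ (especially near $\{v=0\}$) and the accompanying Cesaro-type estimation of the convolution integral from only an almost-everywhere inequality; both steps are standard, however, and closely parallel the reasoning used in \cite[Theorem~9]{CPS} and \cite[Theorem~4.1]{PAV}.
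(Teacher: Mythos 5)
Your proposal is correct and follows essentially the same route as the paper: the same Lyapunov quantity (distance to the Tikhonov curve $\bar{x}(\epsilon(t))$), the same use of Lemma~\ref{le1-fb}, Lemma~\ref{diff} and Lemma~\ref{lem:x_eps}, the same linear differential inequality (your $a$ and $b$ coincide with the paper's $2L(t)$ and $-\dot{\epsilon}(t)\|\bar{x}(\epsilon(t))\|/\epsilon(t)$), and the same integrating-factor estimate. The only differences are cosmetic: you pass from the quadratic to the linear inequality via a $\sqrt{v^2+\delta}$ regularization instead of the substitution $\varphi=\sqrt{2\theta}$, and you settle the limit of the convolution term by a splitting-at-$T$ argument using $b/a\to 0$ where the paper invokes L'H\^{o}spital's rule under hypotheses $(ii)$--$(iii)$; both variants are sound.
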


\begin{proof}
	Set $\theta(t) \eqdef \frac{1}{2} \|x(t) - \bar{x}(\epsilon(t))\|^2$, $t\geq 0$. Then, by using Lemma \ref{le1-fb}
	\begin{align*}
	\dot{\theta}(t) &= \left\langle x(t) - \bar{x}(\epsilon(t)), \dot{x}(t) - \dot{\epsilon}(t) \frac{d}{d\epsilon} \bar{x}(\epsilon(t)) \right\rangle \nonumber \\
	&= \left\langle x(t) - \bar{x}(\epsilon(t)), \dot{x}(t) \right\rangle - \left\langle x(t) - \bar{x}(\epsilon(t)), \dot{\epsilon}(t) \frac{d}{d\epsilon} \bar{x}(\epsilon(t)) \right\rangle \nonumber \\
	&\leq \frac{\lambda(t)}{2} \gamma(t) \epsilon(t) (\gamma(t) \epsilon(t) - 2) \|x(t) -  \bar{x}(\epsilon(t))\|^2 \nonumber \\
	& - \left\langle x(t) - \bar{x}(\epsilon(t)), \dot{\epsilon}(t) \frac{d}{d\epsilon} \bar{x}(\epsilon(t)) \right\rangle.
	\end{align*}
	We denote $L(t)\eqdef \frac{\lambda(t)}{2} \gamma(t) \epsilon(t) (2 - \gamma(t) \epsilon(t))$. The previous inequality yields
	\begin{align*}
	\dot{\theta}(t) \leq - 2 L(t) \theta(t) - \dot{\epsilon}(t) \left\|\frac{d}{d\epsilon}\bar{x}(\epsilon(t)) \right\| \sqrt{2\theta(t)},
	\end{align*}
	where we used that $\epsilon(\cdot)$ is decreasing.
	Substituting $\varphi \eqdef \sqrt{2 \theta}$ yields $\theta = \frac{\varphi^2}{2}$ and $\dot{\theta} = \varphi \dot{\varphi}$, hence the previous inequality becomes
	\begin{align*}
	\dot{\varphi}(t) + L(t) \varphi(t) \leq - \dot{\epsilon}(t) \left\| \frac{d}{d\epsilon} \bar{x}(\epsilon(t)) \right\|.
	\end{align*}
	By Lemma \ref{diff},
	\begin{align*}
	\dot{\varphi}(t) + L(t) \varphi(t) \leq - \frac{\dot{\epsilon}(t)}{\epsilon(t)} \| \bar{x}(\epsilon(t)) \|.
	\end{align*}
	Now, we define the integrating factor $E(t) \eqdef \int_{0}^{t} L(s)\dif s$, to get 
	\begin{align*}
	\frac{\dif}{\dif t}\left(\varphi(t)\exp(E(t))\right)\leq  - \frac{\dot{\epsilon}(t)}{\epsilon(t)} \| \bar{x}(\epsilon(t)) \|\exp(E(t)).
	\end{align*}

	Hence
	\begin{align}\label{varphi}
	0\leq \varphi(t) \leq \exp(- E(t)) \left[\varphi(0) - \int_{0}^{t} \frac{\dot{\epsilon}(s)}{\epsilon(s)} \|\bar{x}(\epsilon(s))\| \exp(E(s)) \dif s \right].
	\end{align}
	If $\int_{0}^{t} \frac{\dot{\epsilon}(s)}{\epsilon(s)} \|\bar{x}(\epsilon(s))\| \exp(E(s)) \dif s$ is bounded, then $\lim_{t \to + \infty}$ $\varphi(t) = 0$; otherwise, taking into consideration $(iii)$, we employ L'H\^{o}spital rule and obtain
	\begin{align}\label{1}
	\lim\limits_{t \to + \infty} \exp(- E(t)) &\int_{0}^{t} \frac{\dot{\epsilon}(s)}{\epsilon(s)} \|\bar{x}(\epsilon(s))\| \exp(E(s)) \dif s \nonumber\\
	&= \lim\limits_{t \to + \infty} \frac{\dot{\epsilon}(t)\|\bar{x}(\epsilon(t))\|}{\epsilon^2(t)
	\lambda(t) \gamma(t) (2 - \gamma(t) \epsilon(t))} = 0,
	\end{align}
	where we used assertion $(i)$ with Lemma \ref{lem:x_eps} and assertion $(ii)$. 
	
	In conclusion, by combining \eqref{1} and $(iii)$ with \eqref{varphi}, it follows that $\varphi(t) \to 0$ as $t \to + \infty$. In particular,
	\begin{align}\label{end}
	\|x(t) - \bar{x}(\epsilon(t))\| \to 0 \text{ as } t \to + \infty.
	\end{align}
	Since
	\begin{align*}
	\|x(t) - P_{\Zer(A+B)}(0)\| \leq \|x(t) - \bar{x}(\epsilon(t))\| + \|\bar{x}(\epsilon(t)) - P_{\Zer(A+B)}(0)\|,
	\end{align*}
	the statement of the theorem follows from Lemma \ref{lem:x_eps} and \eqref{end}.
\end{proof}

\begin{remark}
	Since $\epsilon(t)$ must go to zero as $t \to + \infty$, the hypothesis $\gamma(t) \leq \frac {2 \beta} {1+ 2\beta \epsilon(t)}$ in the previous theorem implies that the stepsize function $\gamma$ is always bounded from above by $2\beta$. This corresponds to the classical assumptions in proving (weak) convergence of the discrete time forward-backward algorithm where, in order to guarantee convergence of the generated iterates, the stepsize has to be taken in the interval $(0,2\beta)$, see \cite{BC2}.
\end{remark}

\begin{remark}
	Comparing the forward-backward dynamical system \eqref{FB} with the Tikhonov regularized Krasnoselskii-Mann dynamical system \eqref{TKM} one may observe that the latter needs a constant step size function $\gamma(t)\equiv\gamma\in(0,2\beta)$. the system \eqref{FB} allows us to vary the stepsizes over time, i.e. we may choose $\gamma(\cdot)$ as a function in $t$.
\end{remark}
	

\begin{remark}
	Hypothesis (ii) of Theorem \ref{th1-fb} is fulfilled when choosing the parameter functions $\epsilon$, $\lambda$ and $\gamma$ such that ${\dot{\epsilon}(t)}/{\epsilon^2(t)} \to 0$ as $t \to + \infty$ and $\inf_{t \to + \infty} \lambda(t) > 0$, $\inf_{t \to + \infty} \gamma(t) > 0$, while Hypothesis (iii) holds true for any choice of parameter functions which satisfy $\lambda(\cdot) \gamma(\cdot) \epsilon(\cdot) \notin L^1(\R_+; \R)$ and $\epsilon(\cdot) \in L^2(\R_+; \R)$.  
	A particular instance of parameter $\beta$ and parameter functions $\epsilon$, $\lambda$ and $\gamma$ that satisfy the hypotheses of Theorem \ref{th1-fb} is given by the choice $\beta=1/2$, $\gamma (t)=1/2$, $\lambda (t) = \cos(1/t)$ and $\epsilon (t) = 1/ (1+t)^{0.6}$, $t\in [0, +\infty)$.	
\end{remark}

\section{A Tikhonov regularized forward-backward-forward dynamical system}
\label{sec:fbf}

The Tikhonov regularized forward-backward dynamical system involved a cocoercive single-valued operator $B:\scrH\to\scrH$. In order to handle more general monotone inclusion problems with less demanding regularity assumptions, Tseng constructed in \cite{TSE} a modified forward-backward scheme which shares the same weak convergence properties as the forward-backward algorithm, but is provably convergent under plain monotonicity assumptions on the involved operators $A$ and $B$. Motivated by this significant methodological improvement, we are interested in investigating a dynamical system whose trajectories strongly converge towards the minimum norm element of the set $\Zer(A+B)$, assumed nonempty, where $A : \scrH \rightrightarrows \scrH$ a maximally monotone operator, while $B : \scrH \to \scrH$ is a monotone and
$({1}/{\beta})$-Lipschitz continuous operator. The proposed dynamical system is derived from forward-backward-forward splitting algorithms coupled with a Tikhonov regularization of the single-valued operator $B$. Our starting point is the differential system
\begin{align}\label{eq:FBF}
\left\{\begin{array}{ll}
z(t) &= J_{\gamma(t) A}\big(x(t) - \gamma(t) Bx(t)\big) \\
0&= \dot{x}(t) + x(t) - z(t) - \gamma(t)\big(Bx(t) - Bz(t)\big) \\
x(0) &= x_0, \end{array}\right.
\end{align}
recently investigated in \cite{BotCseVuo18, BB8}. We assume that $\gamma : [0, + \infty) \to (0,\beta)$ is a Lebesgue measurable function and $x_0 \in\scrH$ is a given initial condition. 

Given a regularizer function $\eps:[0,+\infty)\to\R$, we modify the dynamical system \eqref{eq:FBF} to obtain the new dynamical system
\begin{align}\label{FBF}
\left\{\begin{array}{ll}
z(t) &= J_{\gamma(t) A}\big(x(t) - \gamma(t) (Bx(t) + \epsilon(t)x(t))\big)  \\
0&= \dot{x}(t) + x(t) - z(t) - \gamma(t)\big(Bx(t)-Bz(t) + \epsilon(t)(x(t) -z(t))\big) \\
x(0) &= x_0. \end{array}\right.
\end{align}

\subsection{Existence and uniqueness of strong global solutions}

In this subsection we prove the existence and uniqueness of trajectories of the dynamical system \eqref{FBF} by invoking Theorem \ref{th:existence}.


Let us define the parameterized vector field $V_{\eps,\gamma}:\scrH\to\scrH$ as 
\begin{equation}\label{eq:V}
	V_{\epsilon,\gamma}(x)\eqdef ((\Id - \gamma B_\epsilon) \circ J_{\gamma A} \circ (\Id - \gamma B_\epsilon) - (\Id - \gamma B_\epsilon))x,
\end{equation}
where $B_\epsilon \eqdef B + \epsilon \Id$. Notice that the dynamics \eqref{FBF} can be equivalently rewritten as
\begin{align*}
\left\{\begin{array}{ll}
	&\dot{x}(t) = f(t,x(t)) \\
	&x(0) = x_0,
\end{array}\right.
\end{align*}
with $f : (0,+\infty)\times \scrH \to \scrH$, given by $f(t,x)\eqdef V_{\epsilon(t),\gamma(t)}(x)$. Therefore, measurability in time and local Lipschitz continuity in the spatial variable follows after we have verified these properties for the vector field $V_{\epsilon,\gamma}(x)$. 
\begin{lemma}\label{lem:lip}
	For fixed $\epsilon \in [0,+\infty)$, let $0<\gamma < \frac {\beta} {\epsilon\beta + 1}$. Then, for all $x,y \in \scrH$, it holds
	\begin{align*}
		\|V_{\epsilon,\gamma}(x) - V_{\epsilon,\gamma}(y)\| \leq \sqrt{6} \|x-y\|.
	\end{align*}
\end{lemma}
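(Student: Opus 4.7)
The plan is to bound the Lipschitz constant of $V_{\epsilon,\gamma}$ by decomposing it as $V_{\epsilon,\gamma} = T_1 \circ J_{\gamma A} \circ T_1 - T_1$ with $T_1 \eqdef \Id - \gamma B_\epsilon$, and invoking squared-norm triangle inequalities. First I would note that the hypothesis $\gamma < \beta/(\epsilon\beta+1)$ is equivalent to the product condition $\gamma(1/\beta+\epsilon) < 1$, which is the form that actually enters the estimates. Since $B_\epsilon = B+\epsilon\Id$ is monotone (in fact $\epsilon$-strongly monotone) and $(1/\beta+\epsilon)$-Lipschitz continuous, expanding
\begin{equation*}
\|T_1(u)-T_1(v)\|^2 = \|u-v\|^2 - 2\gamma\langle u-v,\, B_\epsilon u - B_\epsilon v\rangle + \gamma^2\|B_\epsilon u - B_\epsilon v\|^2
\end{equation*}
and dropping the non-positive middle term via monotonicity yields $\|T_1(u)-T_1(v)\|^2 \leq (1+\gamma^2(1/\beta+\epsilon)^2)\|u-v\|^2 \leq 2\|u-v\|^2$.

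Next I would propagate this squared-Lipschitz bound through the composition. Since $J_{\gamma A}$ is firmly nonexpansive (hence nonexpansive), the map $J_{\gamma A}\circ T_1$ inherits the squared-Lipschitz constant $\leq 2$ from $T_1$, and composing once more with $T_1$ multiplies the squared constant by a further factor of $2$, so that $T_1 \circ J_{\gamma A}\circ T_1$ has squared-Lipschitz constant $\leq 4$. The Lipschitz bound on $V_{\epsilon,\gamma}$ then follows by applying a squared triangle inequality to the decomposition
\begin{equation*}
V_{\epsilon,\gamma}(x) - V_{\epsilon,\gamma}(y) = [(T_1\circ J_{\gamma A}\circ T_1)(x) - (T_1\circ J_{\gamma A}\circ T_1)(y)] - [T_1(x)-T_1(y)]
\end{equation*}
and combining with the bounds just obtained.

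The main obstacle is extracting precisely the constant $\sqrt{6}$. The naive inequality $\|a-b\|^2 \leq 2\|a\|^2+2\|b\|^2$ combined with the squared-Lipschitz bounds $4$ and $2$ gives only $\|V_{\epsilon,\gamma}(x)-V_{\epsilon,\gamma}(y)\|^2 \leq 12\|x-y\|^2$, i.e. the looser constant $2\sqrt{3}$. To sharpen this to $\sqrt{6}$ I would rewrite
\begin{equation*}
V_{\epsilon,\gamma}(x) - V_{\epsilon,\gamma}(y) = [w(x)-w(y)-(x-y)] - \gamma[(B_\epsilon\circ w)(x)-(B_\epsilon\circ w)(y) - (B_\epsilon x - B_\epsilon y)]
\end{equation*}
with $w \eqdef J_{\gamma A}\circ T_1$, exploit firm nonexpansivity of $J_{\gamma A}$ to give a tighter estimate on the first bracket (noting that the two terms of $V_{\epsilon,\gamma}$ share the factor $T_1$ and therefore cannot be simultaneously at the worst case), and carefully track the cross-term inner product when expanding the squared norm in terms of the step-size quantity $\gamma(1/\beta+\epsilon)<1$. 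This final bookkeeping of constants is the principal technical step.
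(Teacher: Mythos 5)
There is a genuine gap: the executed part of your argument does not prove the stated bound, and the part that would prove it is only announced, not carried out. Propagating the squared-Lipschitz constant $1+\gamma^2(\epsilon+1/\beta)^2\le 2$ of $T_1=\Id-\gamma B_\epsilon$ through $T_1\circ J_{\gamma A}\circ T_1$ and then applying $\|a-b\|^2\le 2\|a\|^2+2\|b\|^2$ yields, as you concede, only $\|V_{\epsilon,\gamma}(x)-V_{\epsilon,\gamma}(y)\|\le 2\sqrt{3}\,\|x-y\|$, which is strictly weaker than $\sqrt{6}\,\|x-y\|$; so the lemma is not established by what you actually compute. The sharpening to $\sqrt{6}$ — ``exploit firm nonexpansivity \ldots carefully track the cross-term inner product \ldots final bookkeeping of constants'' — is precisely the content of the lemma, and it is left as a sketch with no estimates. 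Moreover, your sketch never uses the $\epsilon$-strong monotonicity of $B_\epsilon$, which is one of the ingredients the quantitative bound actually relies on, and it is not evident that your proposed decomposition into $[w(x)-w(y)-(x-y)]-\gamma[(B_\epsilon\circ w)(x)-(B_\epsilon\circ w)(y)-(B_\epsilon x-B_\epsilon y)]$ closes the factor-of-two deficit without redoing essentially the full expansion.

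For comparison, the paper's proof does not pass through composed Lipschitz constants at all: it expands $\|V_{\epsilon,\gamma}(x)-V_{\epsilon,\gamma}(y)\|^2$ completely (binomial formula twice, with $C_\epsilon=\Id-\gamma B_\epsilon$, $J=J_{\gamma A}$), replaces $\|J C_\epsilon x-J C_\epsilon y\|^2$ by the inner product $\langle C_\epsilon x-C_\epsilon y, J C_\epsilon x-J C_\epsilon y\rangle$ via firm nonexpansiveness, uses the $\epsilon$-strong monotonicity of $B_\epsilon$ on one cross term, discards the term with coefficient $\gamma^2(\epsilon+1/\beta)^2-1\le 0$ (this is where the stepsize restriction $\gamma(\epsilon+1/\beta)<1$ enters), and bounds the remaining cross term by Cauchy--Schwarz and the Lipschitz property of $B_\epsilon$. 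This produces the factorized estimate
\begin{equation*}
\|V_{\epsilon,\gamma}(x)-V_{\epsilon,\gamma}(y)\|^2\le\Bigl(2\gamma\bigl(\epsilon+\tfrac1\beta\bigr)+1\Bigr)\Bigl(1+\gamma^2\bigl(\epsilon+\tfrac1\beta\bigr)^2-2\gamma\epsilon\Bigr)\|x-y\|^2,
\end{equation*}
and the hypothesis $\gamma\epsilon\beta<\beta-\gamma$ bounds the two factors by $3$ and $2$ respectively, giving $6$. To repair your proposal you would need to perform an expansion of this kind explicitly; the high-level plan alone does not deliver the constant.
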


\begin{proof}
Let $x,y \in \scrH$. For the sake of clarity, we abbreviate $C_\epsilon\eqdef \Id - \gamma B_\epsilon$ and $J\eqdef J_{\gamma A}$. \\
	First, by using the binomial formula twice we obtain
	\begin{align*}
		 \norm{V_{\epsilon,\gamma}(x) - V_{\epsilon,\gamma}(y)}^2 &=\|C_\epsilon \circ J \circ C_\epsilon x - C_\epsilon x - C_\epsilon \circ J \circ C_\epsilon y + C_\epsilon y\|^2 \\
		=&\|C_\epsilon \circ J \circ C_\epsilon x - C_\epsilon \circ J \circ C_\epsilon y\|^2 + \|C_\epsilon x - C_\epsilon y\|^2 \\
		-& 2 \langle C_\epsilon \circ J \circ C_\epsilon x - C_\epsilon \circ J \circ C_\epsilon y, Cx - Cy \rangle \\
		=&\|J \circ C_\epsilon x - J \circ C_\epsilon y\|^2 + \gamma^2 \|B_\epsilon \circ J \circ C_\epsilon x - B_\epsilon \circ J \circ C_\epsilon y\|^2 \\
		-& 2\gamma \langle J \circ C_\epsilon x - J \circ C_\epsilon y, B_\epsilon \circ J \circ C_\epsilon x - B_\epsilon \circ J \circ C_\epsilon y \rangle  \\
		+& \|C_\epsilon x - C_\epsilon y\|^2 - 2 \langle C_\epsilon \circ J \circ C_\epsilon x - C_\epsilon \circ J \circ C_\epsilon y, C_\epsilon x - C_\epsilon y \rangle.
	\end{align*}
	By invoking the $(\epsilon + 1/\beta)$-Lipschitz continuity of $B_\epsilon$ we conclude further
	\begin{align}\label{Abschaetzung}
	    &\|V_{\epsilon,\gamma}(x) - V_{\epsilon,\gamma}(y)\|^2 \nonumber \\
		\leq& \left(1 + \gamma^2\left(\epsilon + \frac{1}{\beta}\right)^2 \right) \langle C_\epsilon x - C_\epsilon y, J \circ C_\epsilon x - J \circ C_\epsilon y \rangle \nonumber \\
		&- 2\gamma \langle J \circ C_\epsilon x - J \circ C_\epsilon y, B_\epsilon \circ J \circ C_\epsilon x - B_\epsilon \circ J \circ C_\epsilon y \rangle + \|C_\epsilon x - C_\epsilon y\|^2 \nonumber \\
		&- 2 \langle C_\epsilon \circ J \circ C_\epsilon x - C_\epsilon \circ J \circ C_\epsilon y, C_\epsilon x - C_\epsilon y \rangle \nonumber \\
		=& \left(1 + \gamma^2\left(\epsilon + \frac{1}{\beta}\right)^2 - 2 \right) \langle C_\epsilon x - C_\epsilon y, J \circ C_\epsilon x - J \circ C_\epsilon y \rangle \nonumber \\
		&- 2\gamma \langle J \circ C_\epsilon x - J \circ C_\epsilon y, B_\epsilon \circ J \circ C_\epsilon x - B_\epsilon \circ J \circ C_\epsilon y \rangle + \|C_\epsilon x - C_\epsilon y\|^2 \nonumber \\
		&+ 2\gamma \langle B_\epsilon \circ J \circ C_\epsilon x - B_\epsilon \circ J \circ C_\epsilon y, C_\epsilon x - C_\epsilon y \rangle.
	\end{align}
	On the one hand, the $\epsilon$-strong monotonicity of $B_\epsilon$ yields
	\begin{align}\label{Bstrong}
		- 2\gamma \langle J \circ C_\epsilon x - J \circ C_\epsilon y, B_\epsilon \circ J \circ C_\epsilon x - B_\epsilon \circ J \circ C_\epsilon y \rangle \leq -2\gamma\epsilon \|J \circ C_\epsilon x - J \circ C_\epsilon y\|^2,
	\end{align}
	while on the other hand we deduce from the monotonicity of the resolvent and the choice of the involved parameters that
	\begin{align}\label{leq0}
		\left(\gamma^2\left(\epsilon + \frac{1}{\beta}\right)^2 - 1 \right) \langle C_\epsilon x - C_\epsilon y, J \circ C_\epsilon x - J \circ C_\epsilon y \rangle \leq 0.
	\end{align}
	Taking into account \eqref{Bstrong} and \eqref{leq0}, using the Cauchy-Schwarz inequality, the firm nonexpansiveness of the resolvent, and the $\epsilon$-strong monotonicity and the Lipschitz-continuity of $B_\epsilon$ again, we obtain from \eqref{Abschaetzung}
	\begin{align*}
		&\|V_{\epsilon,\gamma}(x) - V_{\epsilon,\gamma}(y)\|^2 \\
		\leq& -2\gamma \epsilon \|J \circ C_\epsilon x - J \circ C y\|^2 + \|C_\epsilon x - C_\epsilon y\|^2\\
		& +2\gamma \|B_\epsilon \circ J \circ C_\epsilon x - B_\epsilon \circ J \circ C_\epsilon y\| \|C_\epsilon x - C_\epsilon y\| \\
		\leq& \left(2\gamma \left(\epsilon + \frac{1}{\beta}\right) + 1\right) \|C_\epsilon x - C_\epsilon y\|^2 \\
		\leq& \left(2\gamma \left(\epsilon + \frac{1}{\beta}\right) + 1\right) (\|x-y\|^2 + \gamma^2\|B_\epsilon x - B_\epsilon y\|^2 - 2\gamma \langle x - y, B_\epsilon x - B_\epsilon y\rangle) \\
		\leq& \left(2\gamma \left(\epsilon + \frac{1}{\beta}\right) + 1\right) \left[1 + \gamma^2  \left(\epsilon + \frac{1}{\beta}\right)^2 -2 \gamma\epsilon \right] \|x-y\|^2.
	\end{align*}
	Further, by the relation imposed on $\epsilon$ and $\gamma$, we get $\gamma\epsilon\beta < \beta-\gamma$, hence 
	$$ 2\gamma \left(\epsilon + \frac{1}{\beta}\right) + 1 \leq  2 -2 \frac {\gamma}{\beta} + 2 \frac {\gamma}{\beta} + 1 = 3$$
	as well as
	$$
	1 + \gamma^2  \left(\epsilon + \frac{1}{\beta}\right)^2 -2 \gamma\epsilon = (\gamma\epsilon-1)^2 + \frac{\gamma^2}{\beta^2}(2\epsilon \beta + 1) < 1 + \left(1-\frac {\gamma}{\beta}\right)^2 + \frac{\gamma^2}{\beta^2}(2\epsilon \beta + 1) $$
		$$= 2-2\frac {\gamma}{\beta} + 2\frac{\gamma^2}{\beta^2} +2\frac {\gamma^2\epsilon}{\beta}= 1+2\frac {\gamma}{\beta^2} ( \gamma -\beta +\gamma\epsilon\beta) < 2.$$
		Consequently $\|V_{\epsilon,\gamma}(x) - V_{\epsilon,\gamma}(y)\|^2 \leq 6 \|x-y\|^2$, which yields the assertion.
\end{proof}

Based on this estimate, we obtain that
\begin{equation}
\norm{f(t,x)-f(t,y)}\leq L_{f}(t)\norm{x-y}^{2}\qquad\forall t\geq 0,x,y\in\scrH,
\end{equation}
where $L_{f}:[0,+\infty)\to\R$ is defined by
\begin{align*}
L_{f}(t)\eqdef\left(2\gamma(t) \left(\eps(t) + \frac{1}{\beta}\right) + 1\right) \left(1 + \gamma(t)\eps(t)(\gamma(t)\eps(t) + 2 \frac{\gamma(t)}{\beta} - 2)\right).
\end{align*}
Hence, by Assumption \ref{ass:lambda} it follows $L_{f}(\cdot)\in L^1_{\text{loc}}(\R_+;\R)$. We now show that $t\mapsto f(t,x)\in L^{1}_{\text{loc}}(\R_+;\scrH)$ for all $x\in\scrH$. We first establish some continuity estimates of the regularized vector field with respect to the parameters. Define the unregularized forward-backward-forward vector field
\begin{equation}\label{eq:V}
V_{\gamma}(x)\eqdef(\Id-\gamma B)\circ J_{\gamma A}\circ (\Id-\gamma B)x +\gamma Bx-x,
\end{equation}
and the residual vector field
\begin{align}\label{eq:R}
R_{\eps,\gamma}(x) \eqdef& \gamma\left(B\circ J_{\gamma A}(\Id-\gamma B)-B\circ J_{\gamma A}\circ(\Id-\gamma B_{\eps}\right)x \nonumber \\
+& \gamma\eps\left(x-J_{\gamma A}\circ(\Id-\gamma B_{\eps})x\right).
\end{align}
Simple algebra gives the decomposition
\begin{align*}
V_{\eps,\gamma}(x)=V_{\gamma}(x)+R_{\eps,\gamma}(x),
\end{align*}
From \cite{BB8}, we know that the application $\gamma\mapsto V_{\gamma}(x)$ is continuous on $(0,+\infty)$. Furthermore, \cite[Lemma 1]{BB8} gives
\begin{equation}\label{eq:V1}
\lim_{\gamma\to 0^{+}}V_{\gamma}(x)=0\ \forall x\in \scrH.
\end{equation}

\begin{lemma}\label{lem:Rzero}
If $x\in\dom A$, then
\begin{equation}
\lim_{(\eps,\gamma)\to(0,0)^{+}}R_{\eps,\gamma}(x)=0.
\end{equation}
\end{lemma}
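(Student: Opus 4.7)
The plan is to split the residual vector field $R_{\eps,\gamma}(x)$ into its two natural summands and show that each of them vanishes in the limit $(\eps,\gamma) \to (0,0)^{+}$. Write
\[
R_{\eps,\gamma}(x) = \underbrace{\gamma\bigl(B \circ J_{\gamma A}(x-\gamma B x) - B \circ J_{\gamma A}(x-\gamma B_{\eps} x)\bigr)}_{=:\,T_{1}(\eps,\gamma,x)} + \underbrace{\gamma\eps\bigl(x - J_{\gamma A}(x-\gamma B_{\eps} x)\bigr)}_{=:\,T_{2}(\eps,\gamma,x)}.
\]
For $T_{1}$, I would exploit the $(1/\beta)$-Lipschitz continuity of $B$ and the nonexpansiveness of the resolvent $J_{\gamma A}$. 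Chaining these two estimates yields
\[
\|T_{1}(\eps,\gamma,x)\| \le \frac{\gamma}{\beta}\,\|(x-\gamma B x)-(x-\gamma B_{\eps} x)\| = \frac{\gamma^{2}\eps}{\beta}\,\|x\|,
\]
which clearly tends to $0$ as $(\eps,\gamma)\to(0,0)^{+}$. This step is purely mechanical and requires no assumption beyond the standing Lipschitz hypothesis on $B$.

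For $T_{2}$, the only delicate point is to control the factor $\|x - J_{\gamma A}(x-\gamma B_{\eps} x)\|$ independently of $(\eps,\gamma)$ and then multiply by the vanishing scalar $\gamma\eps$. I would insert the splitting
\[
\|x - J_{\gamma A}(x-\gamma B_{\eps} x)\| \le \|J_{\gamma A}(x-\gamma B_{\eps} x) - J_{\gamma A}(x)\| + \|J_{\gamma A}(x) - x\|.
\]
The first term on the right is bounded by $\gamma\|B_{\eps}x\| \le \gamma(\|Bx\|+\eps\|x\|)$ by nonexpansiveness of $J_{\gamma A}$, hence it goes to $0$. For the second term I would invoke the classical fact (see \cite[Theorem~23.48]{BC2}) that, since $x \in \dom A \subseteq \overline{\dom A}$, one has $J_{\gamma A}(x) \to x$ as $\gamma \to 0^{+}$. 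In particular the quantity $\|x - J_{\gamma A}(x-\gamma B_{\eps} x)\|$ remains bounded on any neighborhood of $(0,0)$, so multiplying by $\gamma\eps \to 0$ gives $\|T_{2}(\eps,\gamma,x)\| \to 0$.

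The hypothesis $x \in \dom A$ is used in exactly one place, namely to guarantee convergence of $J_{\gamma A}(x)$ to $x$; this is the only nontrivial ingredient, and it is the step I would expect to be the potential obstacle for a reader unfamiliar with the strong continuity properties of the resolvent at $\gamma=0$. Combining the two bounds by the triangle inequality then yields $\lim_{(\eps,\gamma)\to(0,0)^{+}} R_{\eps,\gamma}(x) = 0$, as claimed.
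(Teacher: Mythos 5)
Your proposal is correct and follows essentially the same route as the paper: the identical splitting of $R_{\eps,\gamma}(x)$, the same $\frac{\gamma^{2}\eps}{\beta}\norm{x}$ bound for the first summand via Lipschitz continuity of $B$ and nonexpansiveness of $J_{\gamma A}$, and the same key fact that the resolvent converges to the identity at points of $\dom A$ as $\gamma\to 0^{+}$. The only cosmetic difference is that you compare $J_{\gamma A}(x-\gamma B_{\eps}x)$ with $J_{\gamma A}(x)$ and invoke $J_{\gamma A}(x)\to x$, whereas the paper compares with $J_{\gamma A}(x-\gamma Bx)$ and cites $J_{\gamma A}(x-\gamma Bx)\to \Pr_{\cl\dom A}(x)=x$.
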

\begin{proof}
Let  $x\in\dom A$. Nonexpansivenes gives
\begin{align*}
\norm{J_{\gamma A}(x-\gamma B_{\eps}x)-J_{\gamma A}(x-\gamma Bx)}\leq\eps\gamma\norm{x}.
\end{align*}
Since $B$ is $({1}/{\beta})$-Lipschitz, it follows
\begin{align*}
\norm{B\circ J_{\gamma A}(x-\gamma Bx)-B\circ J_{\gamma A}(x-\gamma B_{\eps}x)}\leq \frac{\eps\gamma}{\beta}\norm{x}.
\end{align*}
Furthermore,
\begin{align*}
\norm{x-J_{\gamma A}(x-\gamma B_{\eps}x)}\leq\norm{x-J_{\gamma A}(x-\gamma Bx)}+\eps\norm{x}.
\end{align*}
Summarizing the last two bounds, the triangle inequality yields that
\begin{align*}
\norm{R_{\eps,\gamma}(x)}\leq\frac{\gamma^{2}\eps}{\beta}\norm{x}+\gamma\eps\norm{x-J_{\gamma A}(x-\gamma Bx)}+\gamma\eps^{2}\norm{x}.
\end{align*}
By \cite[Proposition 6.4]{PR4}, we know that $\lim_{\gamma\to 0^{+}}J_{\gamma A}(x-\gamma Bx)=\Pr_{\cl\dom A}(x)=x$. From here the result easily follows.
\end{proof}

\begin{lemma}\label{lem:Vcont}
For all $x\in\dom A$, we have
\begin{equation}
\lim_{(\eps,\gamma)\to (0,0)}V_{\eps,\gamma}(x)=0.
\end{equation}
\end{lemma}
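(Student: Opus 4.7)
The plan is to exploit the additive decomposition $V_{\eps,\gamma}(x)=V_{\gamma}(x)+R_{\eps,\gamma}(x)$ that the authors have set up just before stating the lemma. Once this decomposition is in place, the result is essentially a direct combination of the two preceding facts, so the proof should be very short.

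Concretely, I would proceed as follows. First, fix $x\in\dom A$ and write
\[
\norm{V_{\eps,\gamma}(x)} \leq \norm{V_{\gamma}(x)} + \norm{R_{\eps,\gamma}(x)}
\]
using the triangle inequality. For the first term on the right, the limit relation \eqref{eq:V1} (taken from \cite{BB8}) gives $V_{\gamma}(x)\to 0$ as $\gamma\to 0^{+}$, with no dependence on $\eps$, so this contribution vanishes along any sequence $(\eps_{n},\gamma_{n})\to (0,0)^{+}$. For the second term, Lemma \ref{lem:Rzero} gives $R_{\eps,\gamma}(x)\to 0$ as $(\eps,\gamma)\to(0,0)^{+}$ precisely under the hypothesis $x\in\dom A$ used here. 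Combining the two, the right-hand side of the displayed inequality tends to zero, which establishes the claim.

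There is no real obstacle in this lemma, because all the hard work has already been done in Lemma \ref{lem:Rzero}, where the assumption $x\in\dom A$ was needed in order to invoke \cite[Proposition 6.4]{PR4} to obtain $J_{\gamma A}(x-\gamma Bx)\to \Pr_{\cl\dom A}(x)=x$ as $\gamma\to 0^{+}$. The present lemma is then a routine corollary whose only role is to package the continuity of the full regularized vector field at the parameter origin in a form that will be convenient for the subsequent existence and convergence analysis of the dynamical system \eqref{FBF}.
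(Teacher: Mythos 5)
Your proposal is correct and follows exactly the paper's own argument: the decomposition $V_{\eps,\gamma}(x)=V_{\gamma}(x)+R_{\eps,\gamma}(x)$ combined with the limit \eqref{eq:V1} for the unregularized field and Lemma \ref{lem:Rzero} for the residual. Your added observation that the hypothesis $x\in\dom A$ enters only through Lemma \ref{lem:Rzero} is also accurate.
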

\begin{proof}
We just have to combine \eqref{eq:V1} with the decomposition $V_{\eps,\gamma}(\cdot)=V_{\gamma}(\cdot)+R_{\eps,\gamma}(\cdot)$ and Lemma \ref{lem:Rzero}.
\end{proof}

Define the set
\begin{equation}
\Theta\eqdef \left\{(\eps,\gamma)\in\R^{2}_{++}\vert \gamma < \frac {\beta} {\eps\beta + 1}\right\}.
\end{equation}
By nonexpansivenes of the resolvent operator $J_{\gamma A}$ and continuity of $B$, it follows that the map $(\eps,\gamma)\mapsto R_{(\eps,\gamma)}(x)$ is continuous. Furthermore, we can extend it continuously to the closure of the parameter space $\Theta$, denoted as $\bar{\Theta}$, as Lemma \ref{lem:Vcont} shows. This allows us to prove the local boundedness of the vector field.
\begin{lemma}\label{lem:growth}
For all $(\eps,\gamma)\in\Theta$ and all $x\in\scrH$, there exists $K>0$ such that
\begin{equation}
\norm{V_{\eps,\gamma}(x)}\leq K(1+\norm{x}).
\end{equation}
\end{lemma}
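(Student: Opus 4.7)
The plan is to reduce the growth estimate to an evaluation at a fixed reference point by exploiting the uniform Lipschitz bound of Lemma \ref{lem:lip}. Pick any $\bar{x}\in\Zer(A+B)$, which exists by the standing assumption. Then $\bar{x}=J_{\gamma A}(\bar{x}-\gamma B\bar{x})$ for every $\gamma>0$, and from Lemma \ref{lem:lip} we have, for any $(\eps,\gamma)\in\Theta$ and any $x\in\scrH$,
\begin{equation*}
\norm{V_{\eps,\gamma}(x)}\leq\norm{V_{\eps,\gamma}(x)-V_{\eps,\gamma}(\bar{x})}+\norm{V_{\eps,\gamma}(\bar{x})}\leq\sqrt{6}\,\norm{x-\bar{x}}+\norm{V_{\eps,\gamma}(\bar{x})}.
\end{equation*}
It therefore remains to bound $\norm{V_{\eps,\gamma}(\bar{x})}$ uniformly in $(\eps,\gamma)\in\Theta$.

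For this second step, set $z_\eps\eqdef J_{\gamma A}(\bar{x}-\gamma B_\eps\bar{x})$. Since the resolvent is nonexpansive and $B_\eps\bar{x}-B\bar{x}=\eps\bar{x}$, the fixed-point identity for $\bar{x}$ gives
\begin{equation*}
\norm{z_\eps-\bar{x}}=\norm{J_{\gamma A}(\bar{x}-\gamma B_\eps\bar{x})-J_{\gamma A}(\bar{x}-\gamma B\bar{x})}\leq\gamma\eps\,\norm{\bar{x}}.
\end{equation*}
Using the definition \eqref{eq:V} of $V_{\eps,\gamma}$ together with the $(\eps+1/\beta)$-Lipschitz continuity of $B_\eps$, I then get
\begin{equation*}
\norm{V_{\eps,\gamma}(\bar{x})}=\norm{(z_\eps-\bar{x})-\gamma(B_\eps z_\eps-B_\eps\bar{x})}\leq\left(1+\gamma\bigl(\eps+\tfrac{1}{\beta}\bigr)\right)\norm{z_\eps-\bar{x}}.
\end{equation*}

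To get a uniform constant, I exploit the defining inequality of $\Theta$, namely $\gamma(\eps\beta+1)<\beta$, which yields both $\gamma\eps<1-\gamma/\beta<1$ and $\gamma/\beta<1$, hence $\gamma(\eps+1/\beta)<2$. Plugging this in gives $\norm{V_{\eps,\gamma}(\bar{x})}\leq 3\gamma\eps\,\norm{\bar{x}}\leq 3\norm{\bar{x}}$. Combining with the first display and using $\norm{x-\bar{x}}\leq\norm{x}+\norm{\bar{x}}$ yields
\begin{equation*}
\norm{V_{\eps,\gamma}(x)}\leq\sqrt{6}\,\norm{x}+\bigl(\sqrt{6}+3\bigr)\norm{\bar{x}}\leq K(1+\norm{x})
\end{equation*}
with $K\eqdef\max\{\sqrt{6},\,(\sqrt{6}+3)\norm{\bar{x}}\}$, which is independent of $(\eps,\gamma)\in\Theta$ and of $x$. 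The only delicate point is ensuring that the bounds on $\gamma\eps$ and $\gamma/\beta$ extracted from the constraint $\gamma<\beta/(\eps\beta+1)$ are strong enough to make the prefactor $(1+\gamma(\eps+1/\beta))$ bounded, which is exactly where the choice of $\Theta$ comes in.
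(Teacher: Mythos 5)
Your proof is correct, and its overall skeleton (triangle inequality plus the uniform Lipschitz bound of Lemma \ref{lem:lip}, reducing everything to a bound on $V_{\eps,\gamma}$ at one reference point) is the same as the paper's; the difference lies in how that reference-point bound is obtained. The paper fixes an arbitrary $\bar{x}\in\dom A$ and invokes the continuity results (Lemmas \ref{lem:Rzero} and \ref{lem:Vcont}) to extend $(\eps,\gamma)\mapsto V_{\eps,\gamma}(\bar{x})$ continuously to the closure of the parameter set and thereby asserts the existence of an unspecified uniform bound $M$. You instead take $\bar{x}\in\Zer(A+B)$ (available by the standing assumption) and use the resolvent characterization $\bar{x}=J_{\gamma A}(\bar{x}-\gamma B\bar{x})$ for all $\gamma>0$, so that nonexpansiveness of $J_{\gamma A}$ gives $\norm{z_\eps-\bar{x}}\leq\gamma\eps\norm{\bar{x}}$, and the identity $V_{\eps,\gamma}(\bar{x})=(z_\eps-\bar{x})-\gamma(B_\eps z_\eps-B_\eps\bar{x})$ together with the constraint defining $\Theta$ yields the explicit bound $\norm{V_{\eps,\gamma}(\bar{x})}\leq 3\norm{\bar{x}}$. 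Your route is more elementary and fully quantitative: it bypasses the limiting Lemmas \ref{lem:Rzero}--\ref{lem:Vcont} entirely, produces an explicit constant $K$, and its uniformity over all of $\Theta$ is transparent (arguably more so than the paper's compactness-flavored argument, since $\Theta$ is unbounded in $\eps$). What the paper's argument buys in exchange is that it does not use $\Zer(A+B)\neq\emptyset$ at this point and works with any $\bar{x}\in\dom A$, and the continuity lemmas it relies on are needed elsewhere anyway. One cosmetic note: the paper's own proof writes $\sqrt{3}$ where Lemma \ref{lem:lip} gives $\sqrt{6}$; your use of $\sqrt{6}$ is the consistent choice.
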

\begin{proof}
Fix $\bar{x}\in\dom A$. By Lemma \ref{lem:Vcont}, the application $(\eps,\gamma)\mapsto f(\eps,\gamma, \cdot)$ can be continuously extended to the set
\begin{align*}
\bar{\Theta} \eqdef \left\{(\eps,\gamma)\in\R^{2}_{+}\vert \gamma < \frac {\beta} {\epsilon\beta + 1}\right\}.
\end{align*}
Hence, there exists a constant $M>0$ such that $\norm{f(\eps,\gamma, \bar{x})}\leq M$ for all $(\eps,\gamma)\in\Theta$. Furthermore, using Lemma \ref{lem:lip}, we get
\begin{align*}
\norm{V_{\eps,\gamma}(x)}&\leq \norm{V_{\eps,\gamma}(\bar{x})}+\norm{V_{\eps,\gamma}(x)- V_{\eps,\gamma}(\bar{x})}\\
&\leq M+\sqrt{3}\norm{x-\bar{x}}\\
&\leq K(1+\norm{x})
\end{align*}
where we can choose $K\eqdef\max\{\sqrt{3},M+\sqrt{3}\norm{\bar{x}}\}$.
\end{proof}
All these estimates allow us now to prove existence and uniqueness of solutions to the dynamical system \eqref{FBF}.

\begin{theorem}\label{prop:existence}
Let $(\eps,\gamma):[0,+\infty)\to\Theta$ be measurable. Then, for each $x_{0}\in\scrH$, there exists a unique strong solution $t\mapsto x(t)$, $t\geq 0$, of \eqref{FBF}.
\end{theorem}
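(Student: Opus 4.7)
The plan is to apply the Cauchy--Lipschitz theorem (Theorem~\ref{th:existence}) to the first-order reduction of \eqref{FBF}. As already noted before the statement, the algebraic equation for $z(t)$ allows us to eliminate $z$, so that \eqref{FBF} is equivalent to
\begin{equation*}
\dot x(t) = f(t,x(t)), \quad x(0)=x_0, \qquad f(t,x)\eqdef V_{\eps(t),\gamma(t)}(x).
\end{equation*}
It therefore suffices to check the four conditions (f1)--(f4) of Theorem~\ref{th:existence} for this $f$, and almost all the technical work has already been packaged into Lemmas~\ref{lem:lip}, \ref{lem:Rzero}--\ref{lem:Vcont} and \ref{lem:growth}.

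Conditions (f2) and (f3) are essentially immediate from Lemma~\ref{lem:lip}: applying the lemma pointwise at each $t\geq 0$ with parameters $(\eps(t),\gamma(t))\in\Theta$ yields
\begin{equation*}
\norm{f(t,x)-f(t,y)}\leq L_f(t)\norm{x-y}\qquad\forall x,y\in\scrH,
\end{equation*}
with $L_f$ as displayed before the theorem. Since the constraint $(\eps(t),\gamma(t))\in\Theta$ forces $\gamma(t)\eps(t)<1$ and $\gamma(t)/\beta<1$, the explicit formula for $L_f$ shows that $L_f$ is in fact uniformly bounded (the proof of Lemma~\ref{lem:lip} even extracts the bound $L_f(t)\leq\sqrt{6}$), hence trivially lies in $L^1_{\text{loc}}(\R_+;\R)$; this gives (f3) and, via Lipschitz $\Rightarrow$ continuous, also (f2).

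For (f1) I would argue that for each fixed $x\in\scrH$, the map $(\eps,\gamma)\mapsto V_{\eps,\gamma}(x)$ is continuous on $\Theta$: using the decomposition $V_{\eps,\gamma}(x)=V_\gamma(x)+R_{\eps,\gamma}(x)$, continuity of $\gamma\mapsto V_\gamma(x)$ is the fact recalled from \cite{BB8}, while continuity of $(\eps,\gamma)\mapsto R_{\eps,\gamma}(x)$ is the observation made in the paragraph preceding Lemma~\ref{lem:growth}. Since $t\mapsto(\eps(t),\gamma(t))$ is measurable into $\Theta$ by hypothesis, the composition $t\mapsto f(t,x)$ is measurable. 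Property (f4) is then a direct consequence of Lemma~\ref{lem:growth}: for every fixed $x\in\scrH$,
\begin{equation*}
\norm{f(t,x)}\leq K(1+\norm{x}) =: \Delta(t),
\end{equation*}
which is a $t$-independent constant, hence in $L^1_{\text{loc}}(\R_+;\R)$. With (f1)--(f4) verified, Theorem~\ref{th:existence} produces the unique strong global solution.

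I do not expect any substantial obstacle in this argument, since all genuinely nontrivial work has been absorbed by the preceding lemmas; the only thing to keep an eye on is that the bounds $L_f(t)\leq \sqrt{6}$ and $\norm{V_{\eps(t),\gamma(t)}(x)}\leq K(1+\norm{x})$ really do hold uniformly over $(\eps(t),\gamma(t))\in\Theta$, so that no additional boundedness or integrability assumption on $\eps(\cdot)$ or $\gamma(\cdot)$ beyond measurability into $\Theta$ has to be invoked.
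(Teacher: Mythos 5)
Your proposal is correct and follows essentially the same route as the paper: both verify conditions (f1)--(f4) of Theorem~\ref{th:existence} for $f(t,x)=V_{\eps(t),\gamma(t)}(x)$, using the uniform Lipschitz estimate of Lemma~\ref{lem:lip} for (f2)--(f3) and the linear growth bound of Lemma~\ref{lem:growth} for (f4), with measurability of $t\mapsto(\eps(t),\gamma(t))$ handling (f1). Your treatment of (f1) via continuity of $(\eps,\gamma)\mapsto V_{\eps,\gamma}(x)$ is in fact spelled out more carefully than in the paper's own (very terse) proof, but it is the same argument.
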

\begin{proof}
We verify conditions $(f1)-(f4)$ of Theorem \ref{th:existence} for the map $f(t,x)= V_{\eps(t),\gamma(t)}(x)$. Conditions $(f1),(f2)$ follow from the integrability assumptions on the functions $\eps(t),\gamma(t)$. For all $x,y\in\scrH$ and all $t\geq 0$ we have
\begin{align*}
&\norm{f(t,x)-f(t,y)}\leq \sqrt{3}\norm{x-y}, \text{ and }\\
&\norm{f(t,x)}\leq K(1+\norm{x}).
\end{align*}
Hence, $(f3),(f4)$ follow as well. 
\end{proof}

\subsection{Convergence of the trajectories}

In order to show strong convergence of the strong global solution of \eqref{FBF} towards the minimum norm element of $\Zer(A+B)$, we need some additional preparatory results.

\begin{lemma}\label{Bt}
	For almost all $t \in [0, +\infty)$, we have
	\begin{align*}
	0 \leq &\|x(t) - \bar{x}(\epsilon(t))\|^2 - \|x(t) - z(t)\|^2 - (1 + 2\epsilon(t)\gamma(t)) \|z(t) - \bar{x}(\epsilon(t))\|^2 \\
	&+ 2\gamma(t) \langle B_\eps(t) z(t) - B_\eps(t) x(t), z(t) - x_{\epsilon(t)} \rangle
	\end{align*}
\end{lemma}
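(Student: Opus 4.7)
The plan is to extract, from the definition of $z(t)$, a subgradient inclusion for $A$, combine it with the analogous inclusion satisfied by $\bar{x}(\epsilon(t))$, and then exploit the monotonicity properties of $A$ and $B_{\epsilon(t)}$, finishing with the polarization identity to produce the claimed squared-norm decomposition.

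First I would rewrite the definition of $z(t)$ as the inclusion
\begin{equation*}
\frac{1}{\gamma(t)}\bigl(x(t) - z(t)\bigr) - B_{\epsilon(t)}x(t) \in A\,z(t),
\end{equation*}
and use that $\bar{x}(\epsilon(t)) \in \Zer(A + B_{\epsilon(t)})$ to get $-B_{\epsilon(t)}\bar{x}(\epsilon(t)) \in A\bar{x}(\epsilon(t))$. Applying monotonicity of $A$ to this pair and multiplying by $2\gamma(t)>0$ yields
\begin{equation*}
2\langle x(t) - z(t),\, z(t) - \bar{x}(\epsilon(t))\rangle + 2\gamma(t)\langle B_{\epsilon(t)}\bar{x}(\epsilon(t)) - B_{\epsilon(t)}x(t),\, z(t) - \bar{x}(\epsilon(t))\rangle \geq 0.
\end{equation*}

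Next, I would split the $B_{\epsilon(t)}$ term by inserting $\pm B_{\epsilon(t)}z(t)$, writing
\begin{equation*}
B_{\epsilon(t)}\bar{x}(\epsilon(t)) - B_{\epsilon(t)}x(t) = \bigl(B_{\epsilon(t)}z(t) - B_{\epsilon(t)}x(t)\bigr) - \bigl(B_{\epsilon(t)}z(t) - B_{\epsilon(t)}\bar{x}(\epsilon(t))\bigr).
\end{equation*}
The $\epsilon(t)$-strong monotonicity of $B_{\epsilon(t)}$ discussed in Section~\ref{sec:prelims} gives
\begin{equation*}
\langle B_{\epsilon(t)}z(t) - B_{\epsilon(t)}\bar{x}(\epsilon(t)),\, z(t) - \bar{x}(\epsilon(t))\rangle \geq \epsilon(t)\|z(t) - \bar{x}(\epsilon(t))\|^2,
\end{equation*}
so replacing the corresponding negatively-signed scalar product by its upper bound only enlarges the left-hand side of our inequality, yielding
\begin{equation*}
2\langle x(t) - z(t),\, z(t) - \bar{x}(\epsilon(t))\rangle + 2\gamma(t)\langle B_{\epsilon(t)}z(t) - B_{\epsilon(t)}x(t),\, z(t) - \bar{x}(\epsilon(t))\rangle - 2\gamma(t)\epsilon(t)\|z(t) - \bar{x}(\epsilon(t))\|^2 \geq 0.
\end{equation*}

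Finally, I would apply the standard polarization identity $2\langle x(t)-z(t), z(t)-\bar{x}(\epsilon(t))\rangle = \|x(t)-\bar{x}(\epsilon(t))\|^2 - \|x(t)-z(t)\|^2 - \|z(t)-\bar{x}(\epsilon(t))\|^2$ to the first inner product, which merges the $-\|z(t)-\bar{x}(\epsilon(t))\|^2$ with the $-2\gamma(t)\epsilon(t)\|z(t)-\bar{x}(\epsilon(t))\|^2$ term into $-(1+2\gamma(t)\epsilon(t))\|z(t)-\bar{x}(\epsilon(t))\|^2$, giving exactly the claimed inequality. The only subtlety is the direction of the inequality when using the strong monotonicity estimate with the \emph{negative} sign of the scalar product; since we replace a quantity by a larger one, the inequality is preserved, and no further conditions on the parameters are needed at this stage.
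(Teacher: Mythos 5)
Your proof is correct and follows essentially the same route as the paper: the paper applies the $\epsilon(t)$-strong monotonicity of the sum $A+B_{\epsilon(t)}$ to the pair $z(t)$, $\bar{x}(\epsilon(t))$ and then uses the polarization identity, while you apply plain monotonicity of $A$ and $\epsilon(t)$-strong monotonicity of $B_{\epsilon(t)}$ separately, which amounts to the same estimate.
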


\begin{proof}
	First, we observe that the first line in \eqref{FBF} can be equivalently rewritten as
	\begin{align}\label{1}
	\frac{x(t) - z(t)}{\gamma(t)} - B_{\eps(t)}x(t) \in Az(t),
	\end{align}
	hence
	\begin{align*}
	\frac{x(t) - z(t)}{\gamma(t)} + B_{\eps(t)}z(t) - B_{\eps(t)}x(t) = - \frac{\dot{x}(t)}{\gamma(t)} \in (A + B_{\eps(t)})z(t).
	\end{align*}
	On the other hand $0 \in \gamma(t) (A + B_{\eps(t)})\bar{x}(\epsilon(t))$. Using the $\epsilon(t)$-strong monotonicity of $A + B_{\eps(t)}$ yields
	\begin{align*}
	2 \epsilon(t) \gamma(t) \|z(t) - \bar{x}(\epsilon(t))\|^2 &\leq 2 \langle x(t) - z(t) + \gamma(t) B_{\eps(t)} z(t) \\
	& - \gamma(t) B_{\eps(t)} x(t), z(t) - \bar{x}(\epsilon(t)) \rangle \\
	& = \|x(t) - \bar{x}(\epsilon(t))\|^2 - \|x(t) - z(t)\|^2 - \|z(t) - \bar{x}(\epsilon(t))\|^2 \\
	& + 2 \gamma(t) \langle B_{\eps(t)} z(t) - B_{\eps(t)}x(t), z(t) - \bar{x}(\epsilon(t)) \rangle.
	\end{align*}
	This shows the assertion.
\end{proof}


\begin{lemma}\label{xdot}
	Let $t\mapsto x(t)$, $t\geq 0$, be the strong global solution of \eqref{FBF}. Then, for almost all $t \in [0,+\infty)$
	\begin{align*}
		\langle x(t) -\bar{x}(\epsilon(t)), \dot{x}(t) \rangle & \leq \left( \gamma(t) \epsilon(t) + \frac{\gamma(t)- 1}{\beta} \right) \|x(t) - z(t)\|^2\\
		& - \epsilon(t) \gamma(t) \|z(t) - \bar{x}(\epsilon(t))\|.
	\end{align*}
\end{lemma}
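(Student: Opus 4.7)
The plan is to exploit the fact that $z(t)$ satisfies a monotone inclusion with respect to $A+B_{\epsilon(t)}$ in which the right-hand side can be identified with $-\dot{x}(t)/\gamma(t)$, and to combine this with the $\epsilon(t)$-strong monotonicity of that sum evaluated against the zero $\bar{x}(\epsilon(t))$.

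First I would extract the inclusion already noted in the proof of Lemma~\ref{Bt}: $(x(t)-z(t))/\gamma(t)-B_{\epsilon(t)}x(t)\in Az(t)$. Adding $B_{\epsilon(t)}z(t)$ to both sides and using the second line of \eqref{FBF} rewritten as $\dot{x}(t)=-(x(t)-z(t))+\gamma(t)(B_{\epsilon(t)}x(t)-B_{\epsilon(t)}z(t))$ shows that $-\dot{x}(t)/\gamma(t)\in(A+B_{\epsilon(t)})z(t)$. Pairing this with $0\in(A+B_{\epsilon(t)})\bar{x}(\epsilon(t))$ and invoking the $\epsilon(t)$-strong monotonicity of $A+B_{\epsilon(t)}$ (which holds since $A$ is monotone and $B_{\epsilon(t)}$ is $\epsilon(t)$-strongly monotone) yields the key estimate $\langle \dot{x}(t),\,z(t)-\bar{x}(\epsilon(t))\rangle\le -\gamma(t)\epsilon(t)\|z(t)-\bar{x}(\epsilon(t))\|^{2}$.

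To conclude, I would split $x(t)-\bar{x}(\epsilon(t))=(x(t)-z(t))+(z(t)-\bar{x}(\epsilon(t)))$ in the inner product $\langle \dot{x}(t),\,x(t)-\bar{x}(\epsilon(t))\rangle$. The piece involving $z(t)-\bar{x}(\epsilon(t))$ is already controlled by the previous step, so it remains to bound $\langle \dot{x}(t),\,x(t)-z(t)\rangle$. Substituting $\dot{x}(t)$ from the second line of \eqref{FBF} gives $-\|x(t)-z(t)\|^{2}+\gamma(t)\langle x(t)-z(t),\,B_{\epsilon(t)}x(t)-B_{\epsilon(t)}z(t)\rangle$, and splitting $B_{\epsilon(t)}=B+\epsilon(t)\operatorname{Id}$ together with Cauchy--Schwarz and the $(1/\beta)$-Lipschitz continuity of $B$ produces the remaining cross term. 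The main obstacle is really just the sign and constant bookkeeping: one has to carefully identify $-\dot{x}(t)/\gamma(t)$ with an element of $(A+B_{\epsilon(t)})z(t)$, and then apply the Lipschitz estimate in the form that recovers exactly the coefficient $\gamma(t)\epsilon(t)+(\gamma(t)-1)/\beta$ on $\|x(t)-z(t)\|^{2}$. In contrast to Lemma~\ref{le1-fb}, no cocoercivity and no polarization identity such as \eqref{basic} are needed here, since the $\epsilon(t)$-strong monotonicity of $A+B_{\epsilon(t)}$ does the heavy lifting.
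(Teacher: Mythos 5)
Your argument is correct and rests on the same ingredients as the paper's proof, but it is organized along a genuinely shorter route. The paper first expands $2\langle x(t)-\bar{x}(\epsilon(t)),\dot{x}(t)\rangle$ using the identity behind \eqref{basic}, then invokes Lemma~\ref{Bt} (which is where the $\epsilon(t)$-strong monotonicity of $A+B_{\epsilon(t)}$ enters, already packaged in terms of norms), and finally bounds the recombined cross term via the $(\epsilon(t)+1/\beta)$-Lipschitz continuity of $B_{\epsilon(t)}$. You instead keep everything at the level of inner products: you identify $-\dot{x}(t)/\gamma(t)$ as an element of $(A+B_{\epsilon(t)})z(t)$, pair it with $0\in(A+B_{\epsilon(t)})\bar{x}(\epsilon(t))$ to get $\langle\dot{x}(t),z(t)-\bar{x}(\epsilon(t))\rangle\le-\gamma(t)\epsilon(t)\|z(t)-\bar{x}(\epsilon(t))\|^{2}$ directly, split $x(t)-\bar{x}(\epsilon(t))=(x(t)-z(t))+(z(t)-\bar{x}(\epsilon(t)))$, and control $\langle\dot{x}(t),x(t)-z(t)\rangle$ by substituting the second line of \eqref{FBF} and using Cauchy--Schwarz with the $1/\beta$-Lipschitz continuity of $B$. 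This bypasses Lemma~\ref{Bt} and the norm-expansion bookkeeping entirely, and yields exactly the inequality the paper's proof actually produces, namely $\langle x(t)-\bar{x}(\epsilon(t)),\dot{x}(t)\rangle\le\bigl(\gamma(t)\epsilon(t)+\tfrac{\gamma(t)}{\beta}-1\bigr)\|x(t)-z(t)\|^{2}-\gamma(t)\epsilon(t)\|z(t)-\bar{x}(\epsilon(t))\|^{2}$. One small caveat: you claim your Lipschitz step recovers the coefficient $\gamma(t)\epsilon(t)+\tfrac{\gamma(t)-1}{\beta}$ as printed in the lemma, but your computation (like the paper's own proof, and like what is used later in the proof of Theorem~\ref{FBFconvergence}) gives $\gamma(t)\epsilon(t)+\tfrac{\gamma(t)}{\beta}-1$ and a squared last term; the discrepancy is a typo in the lemma statement, not a flaw in your argument.
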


\begin{proof}
	We have for almost all $t \in [0,+\infty)$
	\begin{align*}
		2 \langle x(t) - \bar{x}(\epsilon(t)), \dot{x}(t) \rangle &= 2 \langle x(t) - \bar{x}(\epsilon(t)), z(t) - x(t)  \rangle + 2 \gamma(t) \langle x(t)\\
		& \bar{x}(\epsilon(t)), B_{\eps(t)} x(t)  - B_{\eps(t)} z(t)  \rangle\\
		& = \|z(t) - \bar{x}(\epsilon(t))\|^2 - \|x(t) - \bar{x}(\epsilon(t))\|^2  \\
		& - \|z(t) - x(t)\|^2 + 2\gamma(t) \langle x(t) - \bar{x}(\epsilon(t)), B_{\eps(t)} x(t) - B_{\eps(t)} z(t) \rangle.
	\end{align*}
	By Lemma \ref{Bt}, for almost all $t \in [0,+\infty)$ one has
	\begin{align*}
		\|z(t) - \bar{x}(\epsilon(t))\|^2 - \|x(t) - \bar{x}(\epsilon(t))\|^2 &\leq - \|x(t) - z(t)\|^2 - 2\epsilon(t)\gamma(t) \|z(t) - \bar{x}(\epsilon(t))\|^2 \\
		&~~~~ + 2\gamma(t) \langle B_{\eps(t)} z(t) - B_{\eps(t)} x(t), z(t) - \bar{x}(\epsilon(t)) \rangle,
	\end{align*}
	therefore, by using that $B_{\eps(t)}$ is $(\epsilon(t) + 1/\beta)$-Lipschitz continuous it holds for almost all $t \in [0,+\infty)$
	\begin{align*}
		2 \langle x(t) - \bar{x}(\epsilon(t)), \dot{x}(t) \rangle &\leq -2\|x(t) - z(t)\|^2 - 2\epsilon(t)\gamma(t) \|z(t) - \bar{x}(\epsilon(t))\|^2 \\
		&~~~~+ 2\gamma(t) \langle B_{\eps(t)} z(t) - B_{\eps(t)} x(t), z(t) - x(t) \rangle 
			\end{align*}
$$\leq -2\left(1 - \gamma(t) \epsilon(t) - \frac{\gamma(t)}{\beta} \right) \|x(t) - z(t)\|^2 - 2\epsilon(t)\gamma(t) \|z(t) - \bar{x}(\epsilon(t))\|^2.$$			
\end{proof}

The convergence statement follows.

\begin{theorem}\label{FBFconvergence}
	Let $t\mapsto x(t)$, $t\geq 0$, be the strong solution of \eqref{FBF}. Suppose that $\gamma(t) < \frac {\beta} {\epsilon(t)\beta + 1}$ for all $t \in [0,+\infty)$ and that the following properties are fulfilled
	\begin{align*}
	(i) ~&\epsilon \text{ is absolutely continuous and } \epsilon(t) \text{ decreases to } 0 \text{ as } t \to + \infty, \\[5pt]
	(ii) ~&\frac{\dot{\epsilon}(t)}{\epsilon^2(t)\gamma(t)(\beta(1-\gamma (t)\epsilon(t))-\gamma (t))} \to 0 \text{ as } t \to + \infty, \\[5pt]
	(iii) ~&\int_{0}^{+\infty} \frac{\gamma(t) \epsilon(t)\big(\beta - \beta\gamma(t) \epsilon(t) - \gamma(t)\big)} {\beta \gamma(t) \epsilon(t)+\beta + \gamma(t)} dt = + \infty.
	\end{align*}
	Then $x(t) \to P_{\Zer(A+B)}(0)$ and $z(t) \to P_{\Zer(A+B)}(0)$ as $t \to + \infty$.
\end{theorem}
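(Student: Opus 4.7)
The plan is to mimic the Lyapunov strategy developed for Theorem~\ref{th1-fb}. I would introduce the energy $\theta(t) \eqdef \tfrac{1}{2}\|x(t) - \bar{x}(\epsilon(t))\|^{2}$ and estimate $\dot{\theta}$ along the trajectory. The chain rule gives
\[
\dot{\theta}(t) = \langle x(t) - \bar{x}(\epsilon(t)), \dot{x}(t)\rangle - \dot{\epsilon}(t)\Bigl\langle x(t) - \bar{x}(\epsilon(t)), \tfrac{d}{d\epsilon}\bar{x}(\epsilon(t))\Bigr\rangle.
\]
Lemma~\ref{xdot} controls the first inner product by $-(1 - \gamma(t)\epsilon(t) - \gamma(t)/\beta)\|x(t) - z(t)\|^{2} - \epsilon(t)\gamma(t)\|z(t) - \bar{x}(\epsilon(t))\|^{2}$, both coefficients being strictly positive by the standing hypothesis $\gamma(t) < \beta/(\epsilon(t)\beta + 1)$. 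For the perturbation term I invoke Lemma~\ref{diff} together with $(i)$, which yields $\|\tfrac{d}{d\epsilon}\bar{x}(\epsilon(t))\| \leq \|\bar{x}(\epsilon(t))\|/\epsilon(t)$ and $\dot{\epsilon}(t) \leq 0$.

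Next I would combine the two negative quadratic contributions into a single bound of the form $-2L(t)\theta(t)$ via a weighted Young-type inequality $\|u + v\|^{2} \leq (1 + \mu)\|u\|^{2} + (1 + \mu^{-1})\|v\|^{2}$, choosing $\mu(t) > 0$ so that the resulting coefficient equals exactly
\[
L(t) \eqdef \frac{\gamma(t)\epsilon(t)(\beta - \beta\gamma(t)\epsilon(t) - \gamma(t))}{\beta\gamma(t)\epsilon(t) + \beta + \gamma(t)},
\]
the integrand appearing in hypothesis $(iii)$. The substitution $\varphi(t) \eqdef \sqrt{2\theta(t)}$ then linearises the resulting differential inequality to $\dot{\varphi}(t) + L(t)\varphi(t) \leq -(\dot{\epsilon}(t)/\epsilon(t))\|\bar{x}(\epsilon(t))\|$, structurally identical to the key estimate in the proof of Theorem~\ref{th1-fb}.

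I would then use the integrating factor $E(t) \eqdef \int_{0}^{t} L(s)\,\dif s$, which diverges by $(iii)$, to deduce
\[
0 \leq \varphi(t) \leq e^{-E(t)}\Bigl[\varphi(0) - \int_{0}^{t}\frac{\dot{\epsilon}(s)}{\epsilon(s)}\|\bar{x}(\epsilon(s))\|\,e^{E(s)}\,\dif s\Bigr].
\]
An application of l'H\^opital's rule, combined with the asymptotic boundedness of $\|\bar{x}(\epsilon(t))\|$ (Lemma~\ref{lem:x_eps}) and with hypothesis $(ii)$, which -- modulo the uniform boundedness of $\beta\gamma(t)\epsilon(t) + \beta + \gamma(t)$ -- guarantees $\dot{\epsilon}(t)/(\epsilon(t) L(t)) \to 0$, forces the right-hand side to vanish. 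Hence $\|x(t) - \bar{x}(\epsilon(t))\| \to 0$, and Lemma~\ref{lem:x_eps} yields $x(t) \to P_{\Zer(A + B)}(0)$. For the second assertion, since $\bar{x}(\epsilon(t))$ is the unique fixed point of $y \mapsto J_{\gamma(t)A}(y - \gamma(t)B_{\epsilon(t)}y)$, nonexpansiveness of the resolvent together with the $(1/\beta + \epsilon(t))$-Lipschitz property of $B_{\epsilon(t)}$ produces $\|z(t) - \bar{x}(\epsilon(t))\| \leq (1 + \gamma(t)/\beta + \gamma(t)\epsilon(t))\|x(t) - \bar{x}(\epsilon(t))\|$, with a uniformly bounded prefactor; thus $z(t)$ converges to the same limit.

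The main obstacle I anticipate is the precise algebraic identification of the weight $\mu(t)$ so that the two negative quadratic terms produced by Lemma~\ref{xdot} recombine into exactly $-2L(t)\theta(t)$ with $L(t)$ matching the integrand in $(iii)$; the remaining steps amount to the non-autonomous integrating-factor plus l'H\^opital machinery already deployed in the proof of Theorem~\ref{th1-fb}, adapted to the more delicate step-size regime inherent to the forward-backward-forward dynamics.
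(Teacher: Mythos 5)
Your argument is correct and runs on the same Lyapunov machinery as the paper's own proof: the energy $\theta(t)=\tfrac{1}{2}\|x(t)-\bar{x}(\epsilon(t))\|^{2}$, Lemma~\ref{xdot} for the drift, Lemma~\ref{diff} for the perturbation term, the substitution $\varphi=\sqrt{2\theta}$, the integrating factor $E(t)=\int_{0}^{t}L(s)\,\dif s$, l'H\^opital combined with $(ii)$--$(iii)$, and Lemma~\ref{lem:x_eps} to pass from $\bar{x}(\epsilon(t))$ to $P_{\Zer(A+B)}(0)$. You differ in two local steps, and both variants work. First, to produce $-2L(t)\theta(t)$ the paper discards the term $-\epsilon(t)\gamma(t)\|z(t)-\bar{x}(\epsilon(t))\|^{2}$ and instead converts $-\|x(t)-z(t)\|^{2}$ via the strong-monotonicity bound $\|x(t)-\bar{x}(\epsilon(t))\|\leq\bigl(2+\tfrac{1}{\gamma(t)\epsilon(t)}+\tfrac{1}{\beta\epsilon(t)}\bigr)\|x(t)-z(t)\|$; your Peter--Paul recombination keeps both negative terms and, with the optimal weight $\mu=a/b$ where $a=1-\gamma\epsilon-\gamma/\beta$ and $b=\gamma\epsilon$, yields the coefficient $\tfrac{\gamma\epsilon(\beta-\beta\gamma\epsilon-\gamma)}{\beta-\gamma}$, which strictly dominates the integrand of $(iii)$ rather than equalling it. So the ``main obstacle'' you anticipate is not one: do not aim at exact identification of $\mu(t)$, simply note domination, which still gives $E(t)\to+\infty$ from $(iii)$, while $(ii)$ still annihilates the l'H\^opital quotient because the discarded factors ($\beta-\gamma(t)$, respectively $\beta\gamma(t)\epsilon(t)+\beta+\gamma(t)$) are bounded. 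Second, for the convergence of $z(t)$ the paper invokes the analogue of \cite[Theorem 2]{BB8} to obtain $x(t)-z(t)\to 0$, whereas your direct estimate $\|z(t)-\bar{x}(\epsilon(t))\|\leq(1+\gamma(t)\epsilon(t)+\gamma(t)/\beta)\|x(t)-\bar{x}(\epsilon(t))\|\leq 3\,\|x(t)-\bar{x}(\epsilon(t))\|$ (nonexpansiveness of the resolvent plus the Lipschitz bound on $B_{\epsilon(t)}$, the prefactor being uniformly bounded by the stepsize condition) is simpler and self-contained. One small point: you correctly read Lemma~\ref{xdot} with the squared norm $\|z(t)-\bar{x}(\epsilon(t))\|^{2}$, which is what its proof actually delivers, even though the statement prints the unsquared norm.
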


\begin{proof}
Define $\theta(t)\eqdef \frac{1}{2} \|x(t) -\bar{x}(\epsilon(t))\|^2$, $t\geq 0$. Then, by using Lemma \ref{xdot}, for almost all $t\geq 0$
\begin{align}\label{theta}
	\dot{\theta}(t) &= \left\langle x(t) -\bar{x}(\epsilon(t)), \dot{x}(t) - \dot{\epsilon}(t) \frac{d}{d\epsilon} \bar{x}(\epsilon(t)) \right\rangle \nonumber \\
	&= \left\langle x(t) - \bar{x}(\epsilon(t)), \dot{x}(t) \right\rangle - \left\langle x(t) - \bar{x}(\epsilon(t)), \dot{\epsilon}(t) \frac{d}{d\epsilon} \bar{x}(\epsilon(t)) \right\rangle \nonumber \\
	&\leq - \left(1 - \gamma(t) \epsilon(t) - \frac{\gamma(t)}{\beta} \right) \|x(t) - z(t)\|^2 - \epsilon(t) \gamma(t) \|z(t) - \bar{x}(\epsilon(t))\| \nonumber \\
	&- \left\langle x(t) - \bar{x}(\epsilon(t)), \dot{\epsilon}(t) \frac{d}{d\epsilon} \bar{x}(\epsilon(t)) \right\rangle.
\end{align}
Further, for almost all $t\geq 0$, by $\epsilon(t)$-strong monotonicity of $A + B_t$ one has
\begin{align*}
\epsilon(t) \|z(t) - \bar{x}(\epsilon(t))\|^2 &\leq \left\langle \frac{x(t) - z(t)}{\gamma(t)} + B_{\eps(t)}z(t)-B_{\eps(t)} x(t) , z(t) - \bar{x}(\epsilon(t)) \right\rangle,
\end{align*}
hence by Cauchy-Schwarz inequality, employing the $(\epsilon(t) + 1/\beta)$-Lipschitz continuity of $B_{\eps(t)}$ and rearranging terms, for almost all $t\geq 0$ it holds
\begin{align*}
	\|z(t) - \bar{x}(\epsilon(t))\| \leq \left( 1 + \frac{1}{\gamma(t) \epsilon(t)} + \frac {1}{\beta \epsilon(t)}\right) \|x(t) - z(t)\|.
\end{align*}
In particular, for almost all $t\geq 0$ one has
\begin{align*}
	\|x(t) - \bar{x}(\epsilon(t))\| &\leq \|x(t) - z(t)\| + \|z(t) - \bar{x}(\epsilon(t))\|\\ 
	& \leq \left( 2 + \frac{1}{\gamma(t) \epsilon(t)} + \frac {1}{\beta \epsilon(t)}\right) \|x(t) - z(t)\|,
\end{align*}
which is equivalent to
\begin{align}\label{-xtzt}
	- \|x(t) - z(t)\| \leq - \frac{\beta \gamma(t) \epsilon(t)}{2\beta \gamma(t) \epsilon(t)+\beta + \gamma(t)} \|x(t) - \bar{x}(\epsilon(t))\|
\end{align}
for almost all $t\geq 0$. Inserting \eqref{-xtzt} in \eqref{theta} and dropping the second (nonpositive) term on the right hand side yields, after denoting 
$$L(t)\eqdef \left(1 - \gamma(t) \epsilon(t) - \frac{\gamma(t)}{\beta} \right) \frac{\beta \gamma(t) \epsilon(t)}{2\beta \gamma(t) \epsilon(t)+\beta + \gamma(t)},\ t\geq 0,$$ we see
\begin{align*}
	\dot{\theta}(t) &\leq - 2 L(t) \theta(t) - \left\langle x(t) - \bar{x}(\epsilon(t)), \dot{\epsilon}(t) \frac{d}{d\epsilon} \bar{x}(\epsilon(t)) \right\rangle \\
	&\leq - 2 L(t) \theta(t) - \dot{\epsilon}(t) \left\|\frac{d}{d\epsilon} \bar{x}(\epsilon(t)) \right\| \sqrt{2\theta(t)},
\end{align*}
for almost all $t\geq 0$, where in the second inequality we used that $\epsilon(\cdot)$ is decreasing, thus $\dot{\epsilon}(\cdot)$ is nowhere positive. From here, we can repeat the arguments from the proof of Theorem \ref{th1-fb}, mutatis mutandis, to obtain the desired result.

Analogous to the proof of \cite[Theorem 2]{BB8} one can show, by replacing in the demonstrations of the intermediate results $B$ by $B_{\epsilon(t)}$ and taking into consideration the absolute continuity of $\epsilon$, that $\lim_{t\to +\infty}(x(t)-z(t))=0$, hence $z(t) \to P_{\Zer(A+B)}(0)$ as $t \to + \infty$ as well.  

\end{proof}

\begin{remark}
If $\sup_{t\to +\infty} \gamma(t) < \beta$ one can replace assertion $(ii)$ of the previous theorem with
\begin{align*}
(ii') ~&\frac{\dot{\epsilon}(t)}{\epsilon^2(t)\gamma(t)} \to 0 \text{ as } t \to + \infty.
\end{align*}
\end{remark}

\begin{remark}
If we choose, for example, $\epsilon(t) = {1}/{(1+t)^{0.5}}$ and $\gamma(t) \equiv \gamma \in (0,\beta)$ constant, symbolic computation with Mathematica shows that in this case assertion $(iii)$ holds (as well as assertions $(i)$ and $(ii)$ by choice of $\epsilon(\cdot)$).
\end{remark}

\begin{remark}
	The strong convergence of the trajectories of a forward-backward-forward dynamical system was achieved in \cite[Theorem 3]{BB8} under more demanding hypothesis involving the strong monotonicity of sum of the involved operators.
\end{remark}


\section{Numerical illustrations}
\label{sec:illustrations}

In this section we are going to illustrate by some numerical experiments the theoretical results we achieved. More precisely, we show how adding a Tikhonov regularization term in the considered dynamical systems influences the asymptotic behavior of their trajectories. All code was written in {\sc Matlab} using the {\sc ode15s} function for solving ordinary differential equations.

\subsection{Application to a split feasibility problem}
\label{sec:split}
For our first example we consider the following \textit{split feasibility problem} in $\R^2$
\begin{align}\label{SFP}
\text{find } x \in \R^2 \text{ such that } x \in C \text{ and } Lx \in Q, \tag{SFP}
\end{align}
where $C$ and $Q$ are nonempty, closed and convex subsets of $\R^2$ and $L : \R^2 \to \R^2$ a bounded linear operator. For this purpose, we first notice that \eqref{SFP} can be equivalently rewritten as
\begin{align*}
\min_{x \in C} \left\{ \frac{1}{2} \| Lx - \Pr_{Q} (Lx) \|^2 \right\}.
\end{align*}
The necessary and sufficient optimality condition for this problem yields
\begin{align}\label{OCSFP}
	\text{find } x \in \R^2 \text{ such that } 0 \in \NC_C(x) + L^\ast \circ (\Id - \Pr_Q) \circ Lx.
\end{align}
We approach \eqref{SFP} by the two Tikhonov regularized forward-backward dynamical systems we developed in this paper as well as by an unregularized version and compare the trajectories.
In order to apply the forward-backward dynamical systems to the monotone inclusion problem \eqref{OCSFP}, we set $A\eqdef\NC_C$ and $B\eqdef \nabla(\frac{1}{2} \| L(\cdot) - \Pr_{Q} (L(\cdot)) \|^2) =  L^\ast \circ (\Id - \Pr_Q) \circ L$. It holds for $x, y \in \R^2$
\begin{align*}
\|Bx - By\| \leq \|L\|^2 \|x - y\| + \|L\| \|\Pr_QLx - \Pr_QLy\| \leq 2\|L\|^2 \|x - y\|,
\end{align*}
i.e. $B$ is Lipschitz continuous with constant $2\|L\|^2$ and due to the Baillon-Haddad theorem $B$ is $({1}/({2\|L\|^2}))$-cocoercive. Hence Theorem \ref{FBout} and Theorem \ref{th1-fb} as well as the convergence statement \cite[Theorem 6]{BC7} for the non-regularized forward-backward dynamical system can be employed for \eqref{OCSFP} writen by means of the operators $A$ and $B$. By taking into account that $J_A = \Pr_C$ we obtain the following dynamical systems
\begin{align*}
&\left\{\begin{array}{ll} \dot{x}(t) &= \lambda(t) [\Pr_C(x(t) - \gamma B(x(t))) - x(t)] \\
x(0) &= x_0, \end{array}\right.  \label{FBWO} \tag{FB} \\[5pt]
&\left\{\begin{array}{ll} \dot{x}(t) &= \lambda(t) [\Pr_C(x(t) - \gamma B(x(t))) - x(t)] - \epsilon(t)x(t) \\
x(0) &= x_0, \end{array}\right.  \label{FBOR} \tag{FBOR} \\[5pt]
\end{align*}
and
\begin{align}
&\left\{\begin{array}{ll} \dot{x}(t) &= \lambda(t) [\Pr_C[ x(t) - \gamma ( B(x(t)) + \epsilon(t)x(t) )] - x(t)] \\
x(0) &= x_0, \end{array}\right.  \label{FBIR} \tag{FBIR}
\end{align}
that are special cases of \eqref{FBo}, \eqref{FBx} and \eqref{FB}, respectively. For the implementation we take $C \eqdef B_1(0)$ the open ball with center $0$ and radius $1$ in $\R^2$ and $Q \eqdef \{ x \in \R^2 : 3x_1 - x_2 = 0\}$ a linear subspace. Moreover, we define
\[L \eqdef \left( \begin{array}{rr}
	1 & -1 \\
	1 & 1
\end{array}\right)\]
and set $x_0 = (-3,3)^{\top} \in \R^2$ as starting point. Obviously, $\|L\| = \sqrt{2}$.
According to \cite[Proposition 29.10 and Example 29.18]{BC2}, the projections onto the sets $C$ and $Q$ are given by
\[\Pr_C(x) = \left\{\begin{array}{ll} \frac{x}{\|x\|}, & \|x\| > 1, \\[3pt]
x, & \text{else}, \end{array}\right. \]
and
\[\Pr_Q(x) = x + \frac{\eta - \langle x, u \rangle}{\|u\|^2}u ,\]
with $u = (3,-1)^{\top} \in \R^2$ and $\eta = 0$, respectively. Further, we choose $\epsilon(t) \eqdef {1}/({(1+t)^{0.5}})$ as the Tikhonov regularization function.
For different choices of the parameters $\lambda(t) \equiv \lambda > 0$ and $\gamma > 0$ the resulted trajectories of the dynamical systems \eqref{FBWO}, \eqref{FBIR} and \eqref{FBOR} are displayed in Figures \ref{02505} to \ref{051}.
\begin{figure}[htbp]
	\centering
	\includegraphics[width=0.32\textwidth]{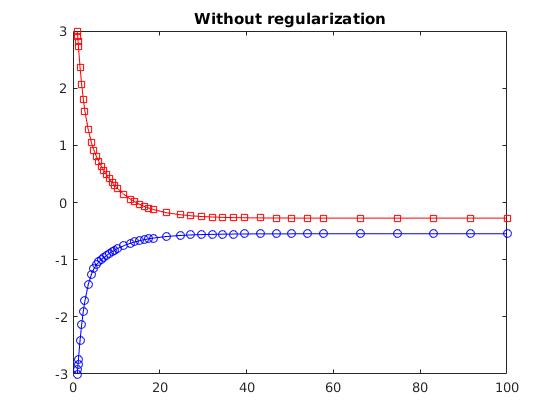}
	\includegraphics[width=0.32\textwidth]{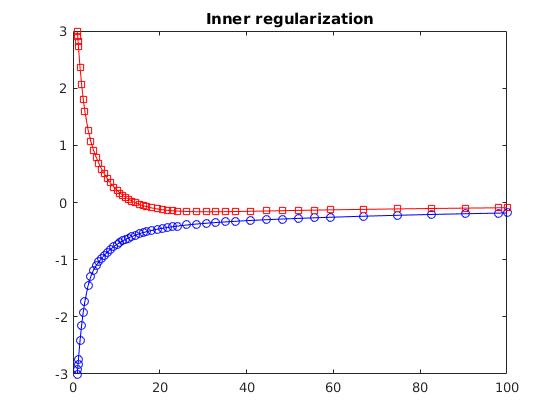}
	\includegraphics[width=0.32\textwidth]{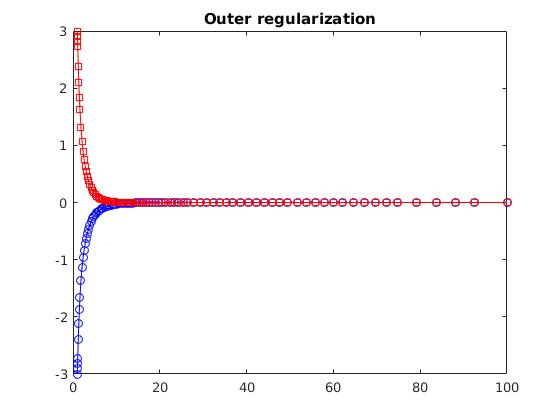}
	\caption{Trajectories of \eqref{FBWO}, \eqref{FBIR} and \eqref{FBOR} for $\lambda = 0.5$ and $\gamma = 0.15$}\label{02505}
\end{figure}

\begin{figure}[htbp]
	\centering
	\includegraphics[width=0.32\textwidth]{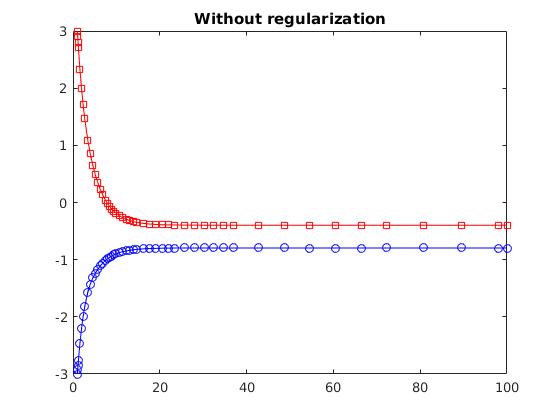}
	\includegraphics[width=0.32\textwidth]{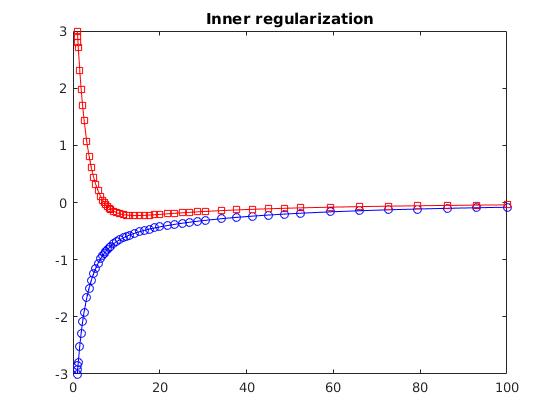}
	\includegraphics[width=0.32\textwidth]{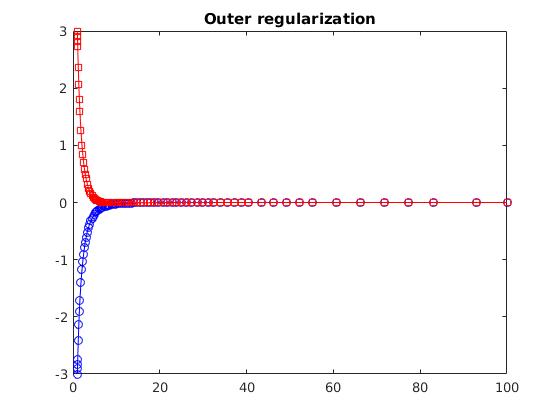}
	\caption{Trajectories of \eqref{FBWO}, \eqref{FBIR} and \eqref{FBOR} for $\lambda = 0.5$ and $\gamma = 0.3$}\label{0505}
\end{figure}

\begin{figure}[htbp]
	\centering
	\includegraphics[width=0.32\textwidth]{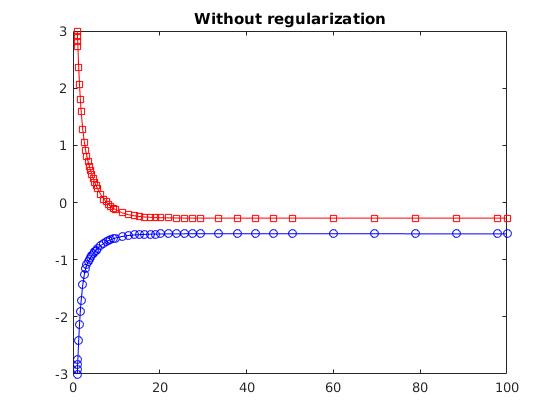}
	\includegraphics[width=0.32\textwidth]{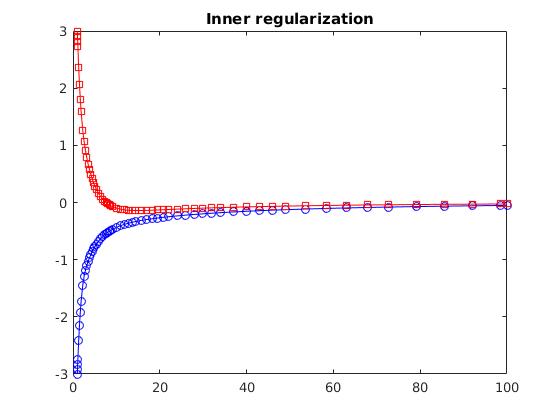}
	\includegraphics[width=0.32\textwidth]{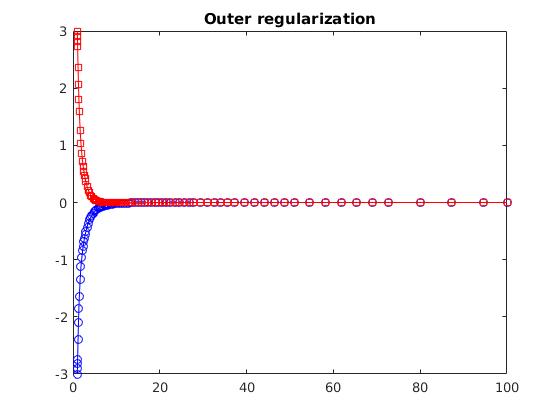}
	\caption{Trajectories of \eqref{FBWO}, \eqref{FBIR} and \eqref{FBOR} for $\lambda = 1$ and $\gamma = 0.15$}\label{0251}
\end{figure}

\begin{figure}[htbp]
	\centering
	\includegraphics[width=0.32\textwidth]{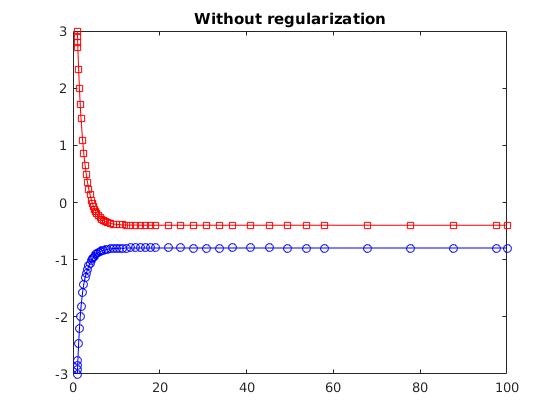}
	\includegraphics[width=0.32\textwidth]{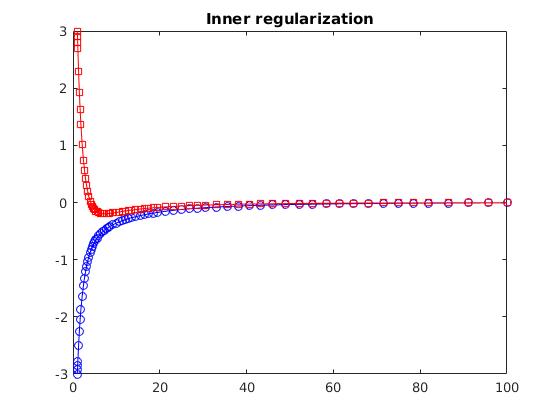}
	\includegraphics[width=0.32\textwidth]{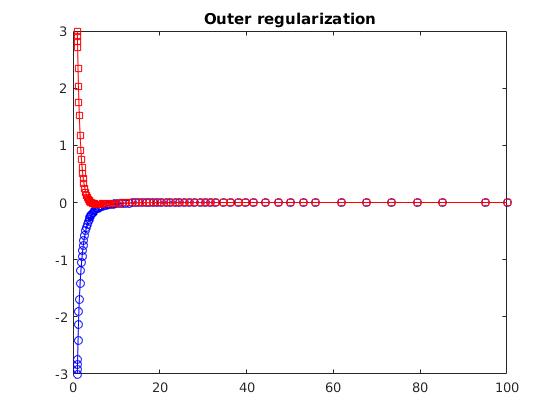}
	\caption{Trajectories of \eqref{FBWO}, \eqref{FBIR} and \eqref{FBOR} for $\lambda = 1$ and $\gamma = 0.3$}\label{051}
\end{figure}
One observes the following: while the trajectories of the unregularized system \eqref{FBWO} approach a solution of \eqref{SFP} with positive norm, the regularized dynamical systems \eqref{FBIR} and \eqref{FBOR} generate trajectories which converge to the minimum norm solution $(0,0)^{\top} \in \R^2$ of \eqref{SFP}. Furthermore, for small parameters $\lambda$ and $\gamma$ the outer regularization \eqref{FBOR} acts more aggressively than the inner regularization \eqref{FBIR}, leading to a faster convergence of the trajectories of \eqref{FBOR}. In contrast, the trajectories of \eqref{FBIR} are gently guided to the minimum norm solution and one can recognize the shape of the unregularized trajectories generated by \eqref{FBWO}.
However, for larger $\lambda$ and $\gamma$, the differences between the trajectories generated by the two Tikhonov regularized systems seem to fade.

\subsection{Application to a variational inequality}

For the second numerical illustration, this time of the forward-backward-forward splitting scheme, we consider the variational inequality
\begin{align}\label{VI}
\text{find } x \in \R^3 \text{ such that } \langle B(x), y - x \rangle \geq 0~\forall y \in C, \tag{VI}
\end{align}
where $B : \R^3 \to \R^3$ is a Lipschitz continuous mapping and $C \subseteq \R^3$ a nonempty, closed and convex set. To attach a forward-backward-forward dynamical system to this problem, we note that \eqref{VI} can be equivalently rewritten as the monotone inclusion
\begin{align}\label{MonIncl}
\text{find } x \in \R^3 \text{ such that } 0 \in B(x) + \NC_C(x).
\end{align}
Hence, by setting $A \eqdef \NC_C$ and taking into consideration that $J_A = \Pr_C$, the Tikhonov regularized forward-backward-forward dynamical system \eqref{FBF} associated to problem \eqref{MonIncl} reads as
\begin{align}\label{FBFVI}\tag{$FBFR$}
\left\{\begin{array}{ll}
z(t) &= \Pr_C[x(t) - \gamma(t) (Bx(t) + \epsilon(t)x(t))]  \\
0&= \dot{x}(t) + x(t) - z(t) - \gamma(t)[Bx(t)-Bz(t) + \epsilon(t)(x(t) -z(t))] \\
x(0) &= x_0. \end{array}\right.
\end{align}
For the implementation we specify
\[B \eqdef \left( \begin{array}{ccc}
0 & 0.1 & 0.5 \\ -0.1 & 0 & -0.4 \\ -0.5 & 0.4 & 0
\end{array}\right)\]
which defines a linear operator and $C\eqdef \{x \in \R^3 : 3x_1 - x_2 + 1 = 0\}$. Since $B$ is skew-symmetric (i.e. $B^{\top} = - B$), it can not be cocoercive, hence our theoretical results on the forward-backward dynamical systems cannnot be used for solving \eqref{MonIncl}. However, since $B$ is Lipschitz continuous with constant $\|B\| \approx 0.64807$ we can apply Theorem \ref{FBFconvergence} for finding a solution to \eqref{MonIncl}. Similarly as in the previous subsection, according to \cite[Example 29.18]{BC2} the projection onto $C$ is given by
\[\Pr_C(x) = x + \frac{\eta - \langle x, u \rangle}{\|u\|^2}u ,\]
with $u = (3,-1,1)^{\top} \in \R^3$ and $\eta = 0$. We choose $x_0 \eqdef (-2, 4, -2)^{\top}$ as starting point and $\epsilon(t)\eqdef \frac {1}{(1+t)^\beta}$ with $\beta \in [0,1)$ as Tikhonov regularization function. We call $\beta$ the \textit{Tikhonov regularization parameter} and note that the choice $\beta = 0$ corresponds to the unregularized system \eqref{eq:FBF} as investigated in \cite{BB8}.
The trajectories of \eqref{FBFVI} for the choices of regularization parameters $\beta \in \{0,0.1,0.5,0.9\}$ and step sizes $\gamma \in \{0.2,0.5\}$ are pictured in Figures \ref{FBFVI02} and \ref{FBFVI05}, respectively.
\begin{figure}[htbp]
	\centering
	\includegraphics[width=0.49\textwidth]{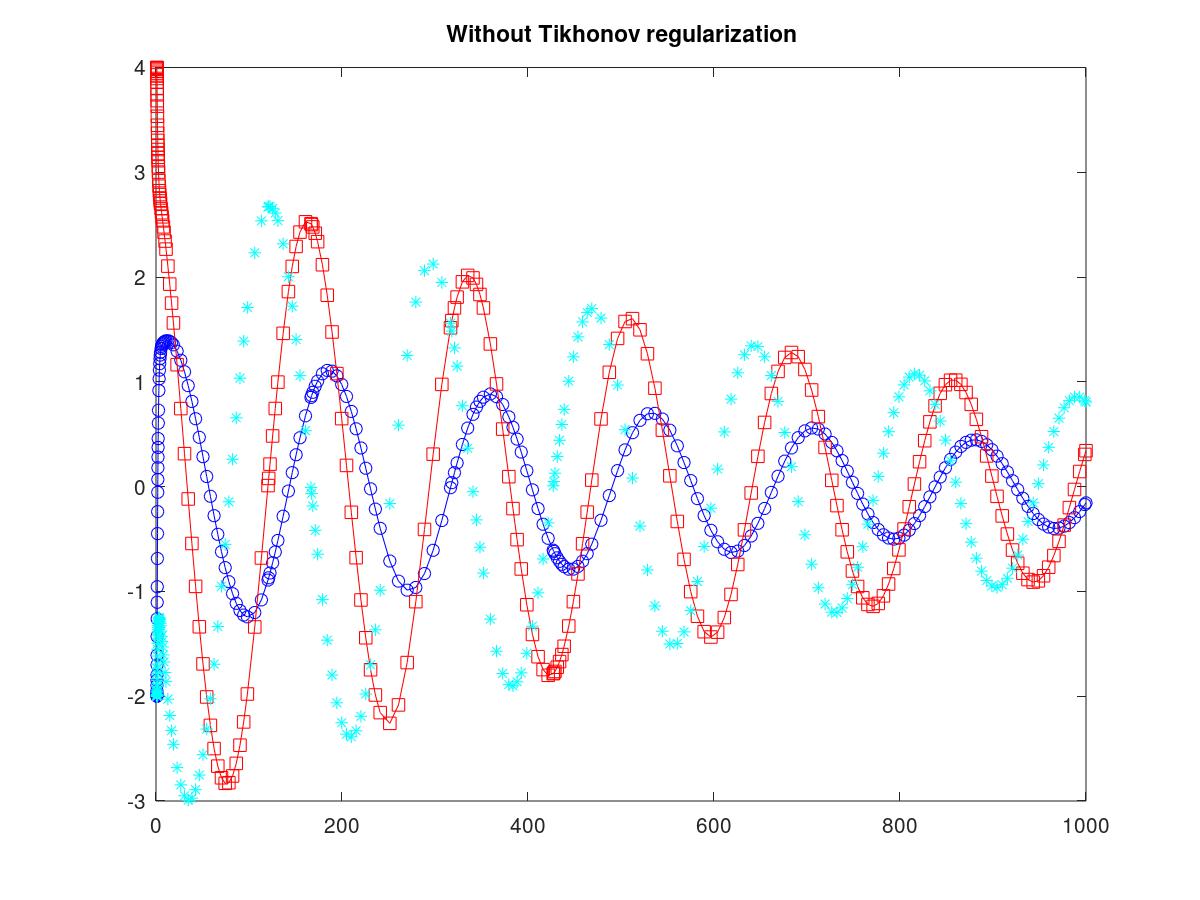}
	\includegraphics[width=0.49\textwidth]{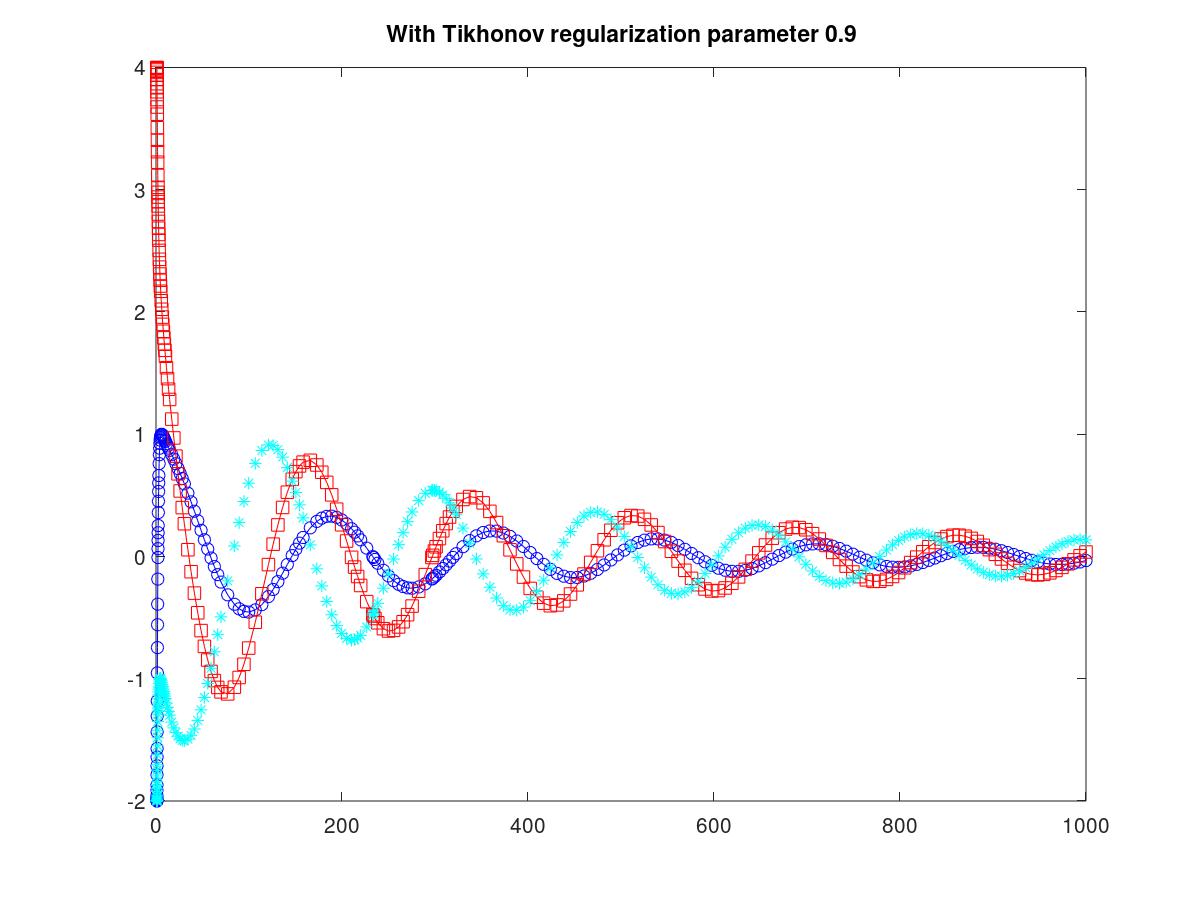}
	\includegraphics[width=0.49\textwidth]{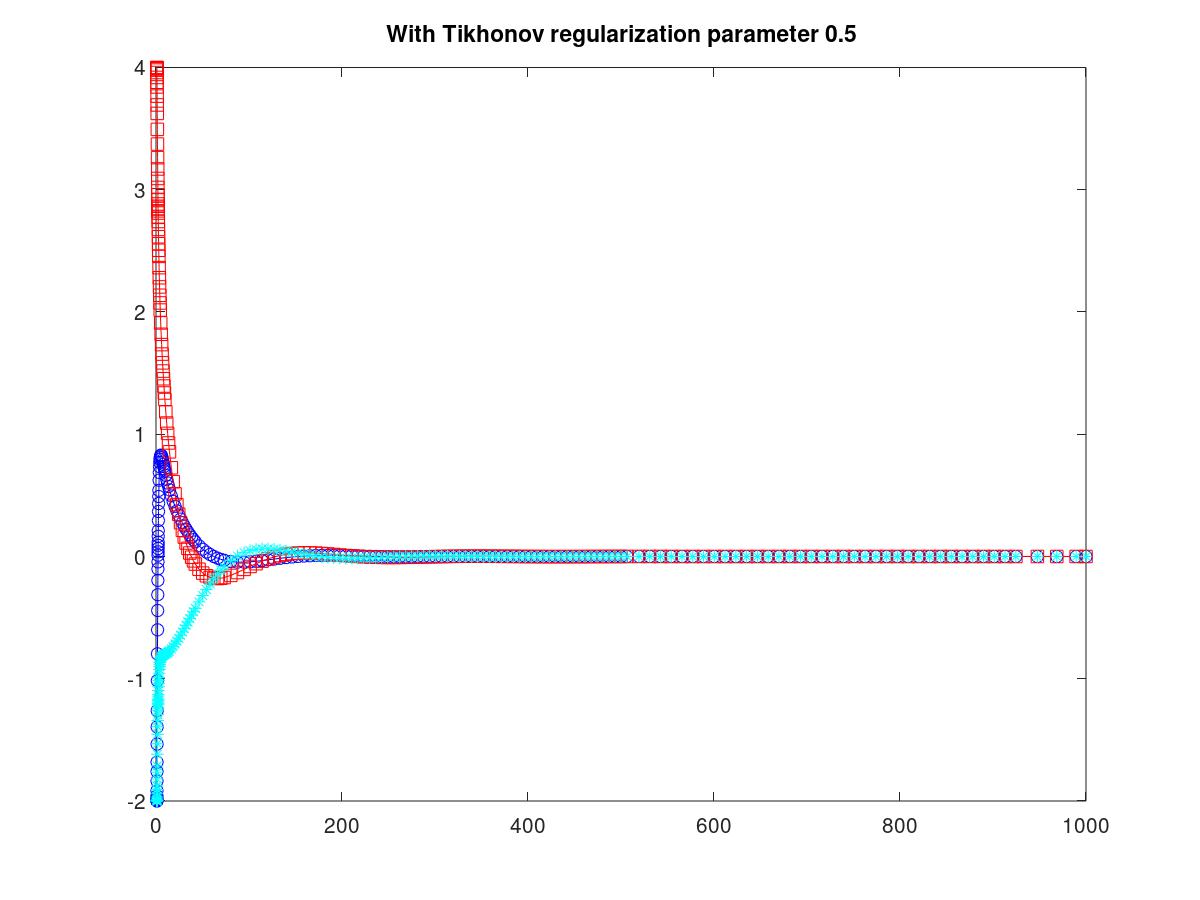}
	\includegraphics[width=0.49\textwidth]{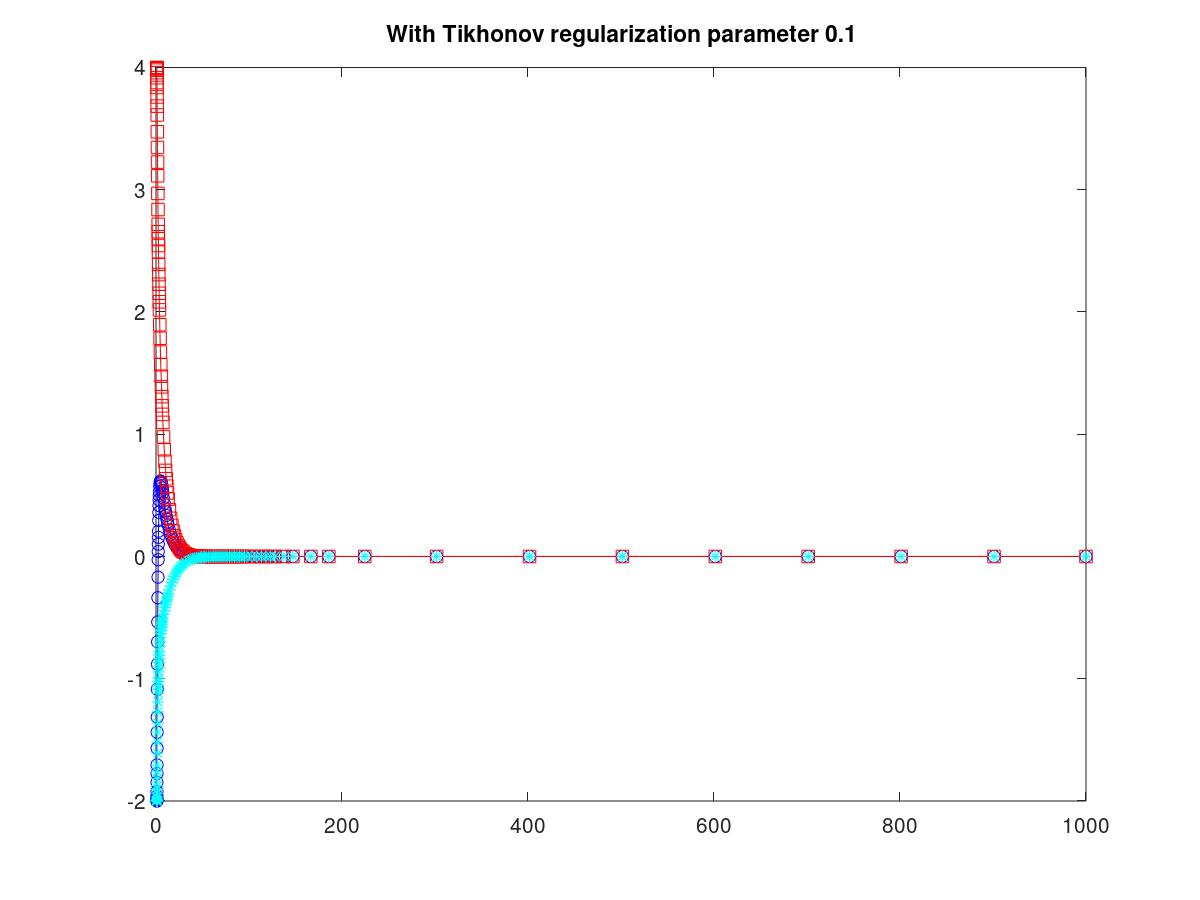}
	\caption{Trajectories of \eqref{FBFVI} for regularization parameters $\beta \in \{0,0.1,0.5,0.9\}$ and $\gamma = 0.2$}\label{FBFVI02}
\end{figure}

\begin{figure}[htbp]
	\centering
	\includegraphics[width=0.49\textwidth]{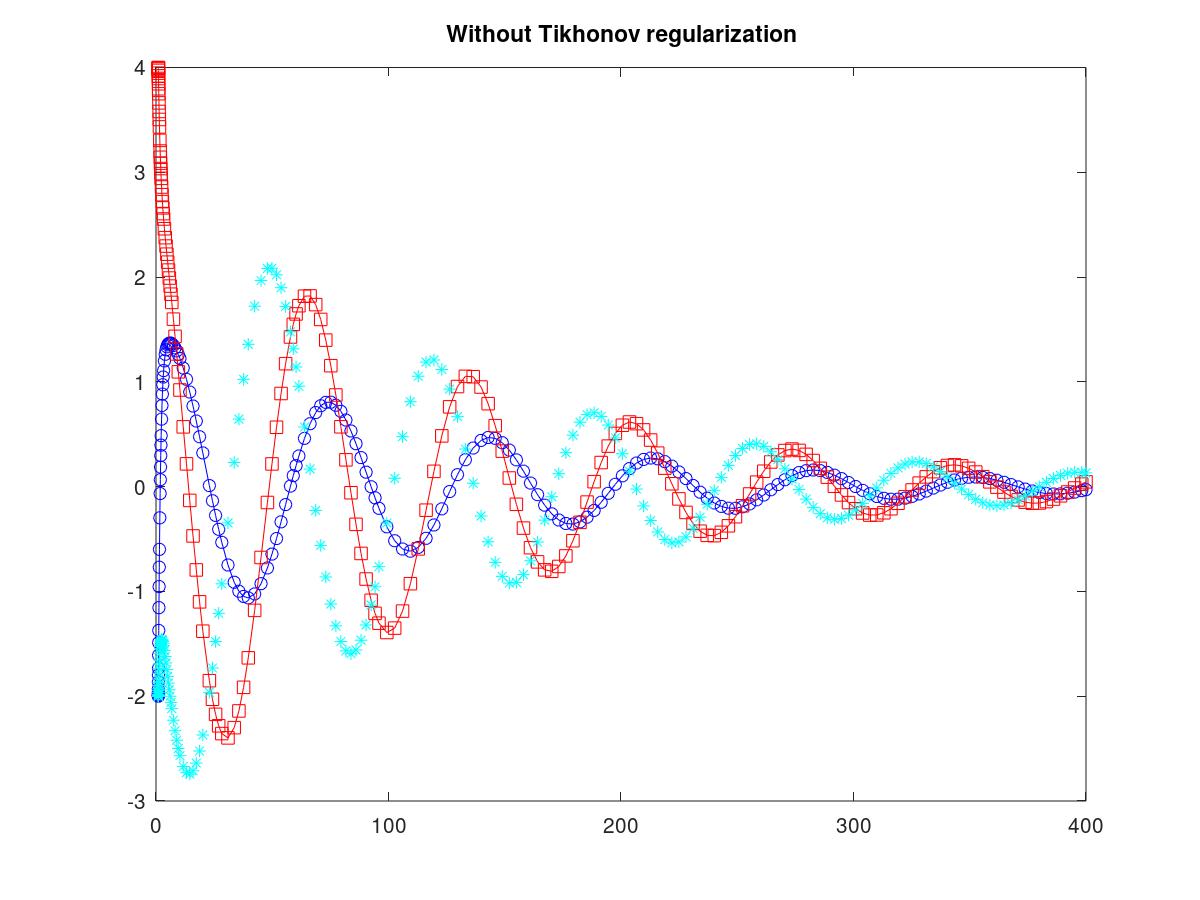}
	\includegraphics[width=0.49\textwidth]{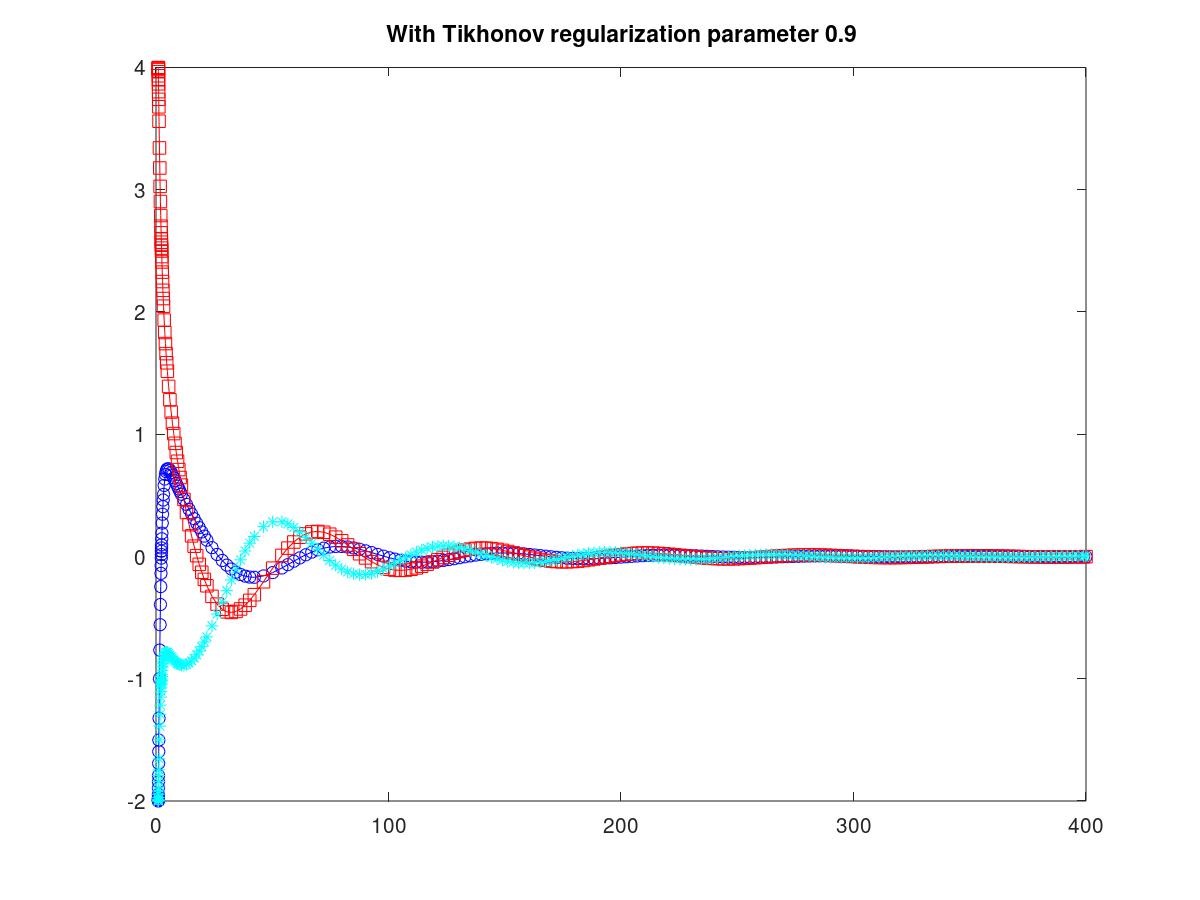}
	\includegraphics[width=0.49\textwidth]{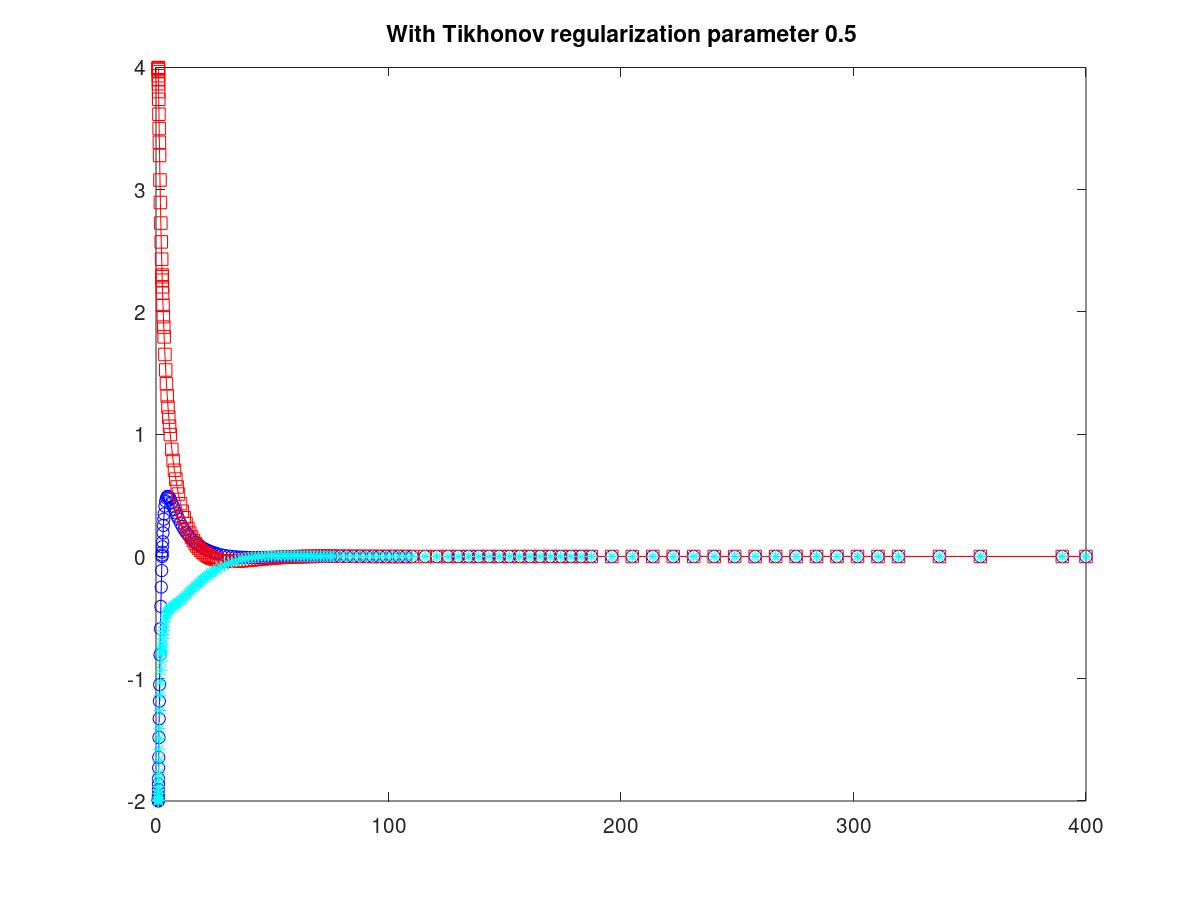}
	\includegraphics[width=0.49\textwidth]{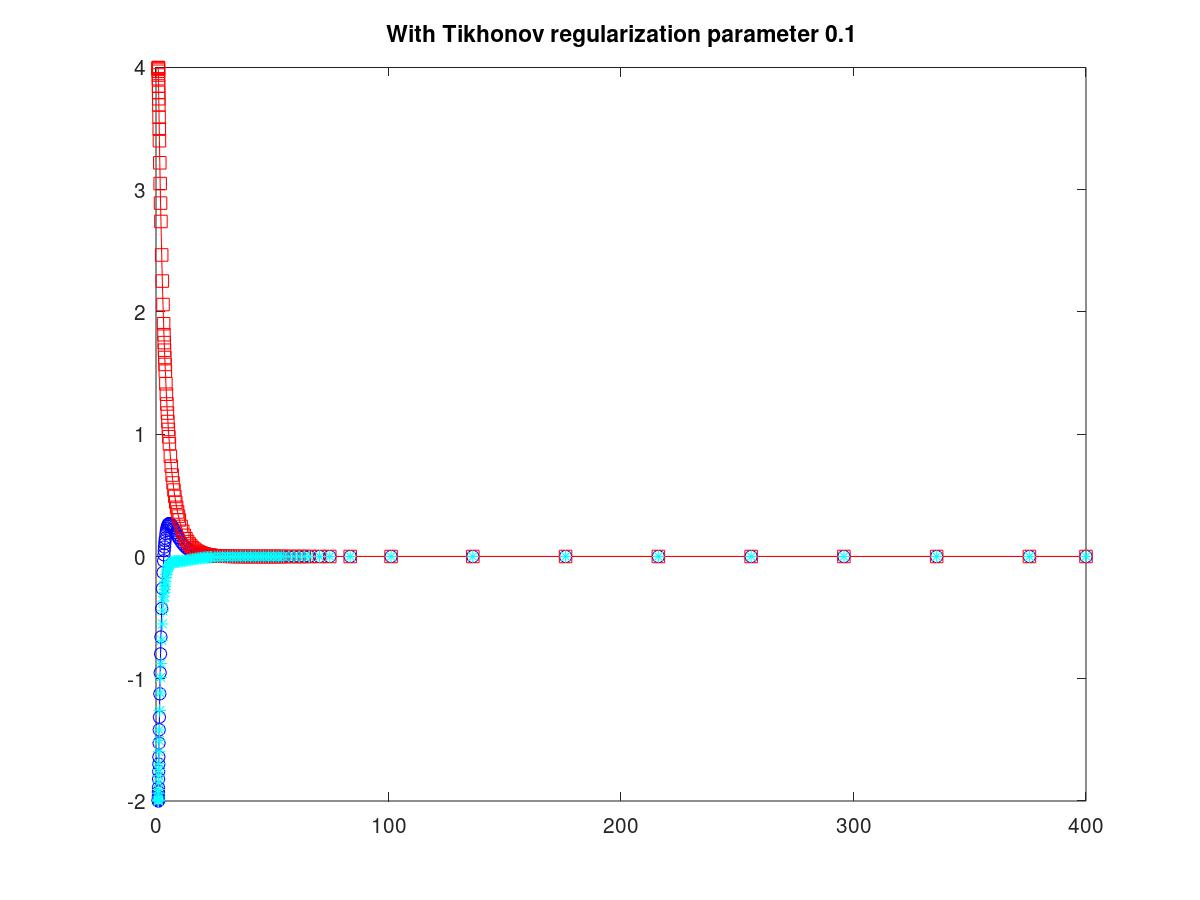}
	\caption{Trajectories of \eqref{FBFVI} for regularization parameters $\beta \in \{0,0.1,0.5,0.9\}$ and $\gamma = 0.5$}\label{FBFVI05}
\end{figure}
One observes that the unregularized trajectories are oscillating with high frequency and converge slowly to zero. As we employ the Tikhonov regularization, the oscillating behaviour flattens out and the convergence speed increases. Since the parameter $\beta$ is the exponent in the denominator of $\epsilon$, a small value of $\beta$ corresponds to a stronger impact of the Tikhonov regularization and vice versa. Hence, the two above mentioned effects are most pronounced when $\beta$ is small.
Moreover, comparing Figures \ref{FBFVI02} and \ref{FBFVI05} suggests that increasing the step size $\gamma$ results in an acceleration of the convergence behaviour (note the different time scales in Figures \ref{FBFVI02} and \ref{FBFVI05}).

\section{Conclusions}
\label{sec:conclusion}
In this paper we perturb by means of the Tikhonov regularization several dynamical systems in order to guarantee the strong convergence of their trajectories under reasonable hypotheses. First we investigate a Tikhonov regularized Krasnoselskii-Mann dynamical system and show that its trajectories strongly converge towards a minimum norm fixed point of the involved nonexpansive operator, slightly extending some recent results from the literature. As a special case, a perturbed forward-backward dynamical system with an outer Tikhonov regularization is obtained, whose trajectories strongly converge towards the minimum norm zero of the sum of a maximally monotone operator and a single-valued cocoercive operator. Making the Tikhonov regularization an inner one, by perturbing the single-valued operator and not the whole system as above, another Tikhonov regularized forward-backward dynamical system, this time with dynamic stepsizes (in contrast to the constant ones considered before) is obtained and its trajectories strongly converge towards the minimum norm zero of the mentioned sum of operators as well. Afterwards we consider an implicit forward-backward-forward dynamical system with a similar inner Tikhonov regularization of the involved single-valued operator, that is taken to be only Lipschitz continuous this time. The trajectories of this perturbed dynamical system strongly converge towards the minimum norm zero of the sum of a maximally monotone operator with the mentioned single-valued Lipschitz continuous one. These results improve previous contributions from the literature where only weak convergence of such trajectories was obtained under standard assumptions, more demanding hypotheses of uniform monotonicity or strong monotonicity being employed for deriving strong convergence. In order to illustrate our achievements we present some numerical experiments performed in {\sc Matlab} by using the {\sc ode15s} function for solving ordinary differential equations. In order to deal with the forward-backward dynamical systems we consider a split feasibility problem, while for the forward-backward-forward dynamical system we use a variational inequality. In both these situations one can note that adding a Tikhonov regularization term in the considered dynamical systems significantly influences the asymptotic behaviour of their trajectories. More precisely, while the trajectories of the unregularized dynamical systems are oscillating with high frequency and converge slowly towards some (random) solutions of the considered problems, the regularized dynamical systems generate trajectories which converge to the corresponding minimum norm solutions. Moreover, the outer regularization acts more aggressively than the inner regularization, leading to a faster convergence of the trajectories.

\section*{Acknowledgements}

The work of R.I. Bo\c{t} was supported by FWF (Austrian Science Fund), project I2419-N32.
The work of S.-M. Grad was supported by FWF (Austrian Science Fund), project M-2045, and by DFG (German Research Foundation), project GR 3367/4-1.
The work of D. Meier was supported by FWF (Austrian Science Fund), project I2419-N32, by the Doctoral Programme Vienna Graduate School on Computational Optimization (VGSCO), project W1260-N35 and by DFG (German Research Foundation), project GR3367/4-1. 
M. Staudigl thanks the COST Action CA16228 ``European Network for Game Theory'' for financial support. The authors thank Phan Tu Vuong for valuable discussions.



\bibliographystyle{plainnat}
\bibliography{refs}
\end{document}